\documentclass[11pt, reqno]{amsart}   	
\usepackage[truedimen,margin=29truemm]{geometry}          		
\usepackage{graphicx}				
\usepackage{amssymb}
\usepackage{amsmath}
\usepackage{amsthm}
\usepackage{mathtools}
\usepackage{url}
\usepackage{bigints}
\usepackage{enumerate} 
\usepackage{color}

\newcommand{\loc}{\mathop{\mathrm{loc}}}

\newcommand{\sgn}{\mathop{\mathrm{sgn}}}

\newcommand{\supp}{\mathop{\mathrm{supp}}}

\newcommand{\N}{\mathbb{N}} 

\newcommand{\R}{\mathbb{R}} 
\newcommand{\C}{\mathbb{C}}

\newcommand{\jap}[1]{\langle #1 \rangle}
\def\pa{\partial}
\def\a{\alpha}
\def\b{\beta}
\def\c{\gamma}

\def\g{\psi}

\def\l{\lambda}
\def\m{\mu}
\def\n{\nu}

\def\s{\sigma}

\numberwithin{equation}{section}

\theoremstyle{plain}
\newtheorem{thm}{Theorem}[section]
\newtheorem{proposition}[thm]{Proposition}
\newtheorem{lemma}[thm]{Lemma} 
\newtheorem{corollary}[thm]{Corollary}

\theoremstyle{definition}

\newtheorem{assump}[thm]{Assumption}

 \newtheorem{remark}[thm]{Remark}

 \newtheorem*{remarks*}{Remarks}
\newtheorem*{remark*}{Remark}

\title[Dispersive estimates for negative Coulomb operators]{Dispersive estimates for Schr\"odinger operators with negative Coulomb-like potentials in one dimension}
\author{Akitoshi Hoshiya}
\address{Graduate School of Mathematical Sciences, The University of Tokyo, 3-8-1 Komaba, Meguro-ku, Tokyo 153-8914, Japan}
\email{hoshiya@ms.u-tokyo.ac.jp}

\author{Kouichi Taira}
\address{Faculty of Mathematics, Kyushu University, 744, Motooka, Nishi-ku, Fukuoka 819-0395, Japan}
\email{taira.kouichi.800@m.kyushu-u.ac.jp}

\begin{document}

\keywords{Dispersive estimates, Degenerate stationary phase theorem, WKB construction}

\subjclass{35J10, 35Q41, 34E20, 34E05}

\maketitle

\begin{abstract}
In this paper, we consider the dispersive estimates for Schr\"odinger operators with Coulomb-like decaying potentials, such as $V(x)=-c|x|^{-\mu}$ for $|x|\gg 1$ with $0<\m<2$, in one dimension.
As an application, we establish both the standard and orthonormal Strichartz estimates for this model. One of the difficulties here is that perturbation arguments, which are typically applicable to rapidly decaying potentials, are not available. To overcome this, we derive a WKB expression for the spectral density and use a variant of the degenerate stationary phase formula to exploit its oscillatory behavior in the low-energy regime.
\end{abstract}

\section{Introduction and main results}\label{2508241543}

\subsection{Background}

We consider the Coulomb-like Schr\"odinger operator
\begin{align*}
P=-\pa_x^2+V(x) \quad \text{on}\quad \R,
\end{align*}
where $V$ is a real-valued function with a Coulomb decay such as $V(x)=-c|x|^{-\m}$ with $\m\in (0,2)$ and $c>0$ for $|x|\gg 1$. Such potentials are important in physics because they represent the one-dimensional analogue of the hydrogen atom's Hamiltonian $H=-\Delta-\frac{c}{|x|}$ on $\R^3$. In mathematical aspect, Coulomb-like Schr\"odinger operators have the following features:

\begin{itemize}

\item Since the potential $V$ decays slowly at infinity, the low-energy behavior of $P$ is markedly different from that of $P_0=-\pa_x^2$. Due to its negativity and slow decay, the potential exerts an attractive effect, and consequently, we find that $P$ possesses infinitely many negative eigenvalues \cite[Theorem XIII.6 (Vol IV)]{RS}. On the other hand, the negativity of $V$ slightly lowers the threshold between scattering energies and binding energies, which implies that the zero-energy states exhibit scattering behavior. For studies on stationary scattering states, see \cite{DS,FS,IT,Su,Y-1} and references therein.

\item The potential $V$ exhibits a long-range interaction near infinity at positive energies when the parameter $\m$ satisfies $\m\in (0,1]$. Consequently, the asymptotic behavior of the generalized eigenfunctions and the long-time asymptotics of the time propagator are different from those of the free motion $P_0=-\pa_x^2$. This difference is crucial when considering the scattering theory for $P$ and the studies on the scattering matrices. For example, see \cite[Chapter 10]{Y} and references therein.

\end{itemize}

In this paper, we study dispersive $L^1\to L^{\infty}$ estimates 
\begin{align}\label{eq:dispersive}
\|e^{-itP}E_{\mathrm{ac}}(P)\|_{L^1\to L^{\infty}}\lesssim\frac{1}{|t|^{\frac{d}{2}}}\quad (\text{for $t\in \R\setminus \{0\}$}),
\end{align}
where $d$ is the dimension and $E_{\mathrm{ac}}(P)$ denotes the orthogonal projection onto the absolutely continuous subspace of $P$. Despite its importance in physics, there are fewer studies on Schr\"odinger operators with colulomb-like potentials $V(x)=-c|x|^{-\m}$, particularly concerning long-time behavior, compared to those with fast-decaying potentials. The main reason is that the potential functions $V$ cannot be treated as a perturbation of $P_0=-\pa_x^2$ since they decay slowly at infinity. Conversely, potentials that decay fast, like $|x|^{-\m}$ with $\m>2$, can be treated perturbatively using the Born series (\cite{GS,JK}). As far as the authors know, there are only two works on dispersive type estimates for $0<\m<2$ but with positive charges $V(x)=c|x|^{-\m}$ ($c>0$). The first one is due to Mizutani \cite{M} who proved Strichartz estimates for $P=-\Delta+c|x|^{-\m}$ with $c>0$ and $0<\m<2$ when the dimension is strictly greater than two (see also \cite{T} for the two-dimensional case). The second one is the result by Black-Toprak-Vergara-Zhou \cite{BTVZ}, where they showed dispersive estimates for $P=-\Delta+c|x|^{-1}$ with $c>0$ when the initial data is radially symmetric and the dimension is three. On the other hand, dispersive estimates or Strichartz estimates for potentials with negative charges $V(x)=-c|x|^{-\m}$ have not been studied to date. This paper presents the first attempt to investigate these estimates in this model.

The dispersive estimates \eqref{eq:dispersive} have been extensively investigated for both fast-decaying and inverse square potentials. In one dimension, Goldberg and Schlag \cite{GS} established the estimate under specific decay assumptions on $V$: $\int_{\R}(1+|x|)V(x)dx<\infty$ when the operator $P$ has no zero-energy resonances, and $\int_{\R}(1+|x|)^2V(x)dx<\infty$  otherwise. For three-dimensional cases, the sharpest results were provided by Beceanu and Goldberg \cite{BG} for generic potentials and by Beceanu \cite{B} for exceptional ones. For results in higher dimensions, see \cite{GG}, and for counterexamples showing the necessity of regularity assumptions on $V$, see \cite{GV}. Furthermore, works on Schr\"odinger operators with inverse square potentials include the three-dimensional case \cite{FFFP} and the higher-dimensional cases \cite{T3}. Finally, specific results regarding Schr\"odinger operators on conic manifolds with inverse square potentials can be found in \cite{JZ,JZ2,T3}. See also a great survey \cite{Sc} on the dispersive estimate.

In related work, local $L^2$ time decay estimates of the form 
\begin{align}\label{eq:localtime}
\|e^{-itP}E_{\mathrm{ac}}(P)\|_{L^2(K)\to L^2(K)}\lesssim \frac{1}{|t|^{N}},\quad \text{$K\subset \R$ is a compact set}
\end{align}
have been studied following the pioneering work of Jensen and Kato \cite{JK}. For $P=-\Delta+V(x)$ on $\R^d$ with fast-decaying potentials $V$, the optimal decay rate is given by $N=\frac{d}{2}$, which occurs in generic cases \cite{JK}. Regarding inverse square potentials, $V(x)=c|x|^{-2}$, Wang \cite{W} showed that the optimal decay rate $N$ is given by $N=\sqrt{\frac{(d-2)^2}{4}+c}+1$ for $d\geq 2$. Notably, this rate depends on the coupling constant $c$.
 On the other hands, for slowly decaying potentials $V(x)=\frac{c}{|x|^{\m}}$ ($0<\m<2$), Yafaev \cite{Y-1} showed that $N$ can be taken arbitrary large when $c>0$ and $N=1$ is the optimal rate when $c<0$ in the one-dimensional case. Later, both results were generalized to higher-dimensional cases by Nakamura \cite{N1} for $c>0$ and Fournais-Skibsted \cite{FS} for $c<0$. See \cite{FSY,T2} for fractional cases and \cite{BB,G} on asymptotically conic manifolds.

\subsection{Main result}

Here we consider smooth potentials unlike the exact model of Hydrogen atom since the singularity of $|x|^{-\m}$ at $x=0$ is too strong in one dimension (at least when $\m\geq 1/2$). We note that the results in \cite{BTVZ,B,BG} suggest that a weak singularity, such as $|x|^{-1}$ in the three-dimensional case, would not affect the decay property of the propagator.

\begin{assump}\label{assump:V}
A function $V \in C^{\infty} (\R; \R)$ is a smooth real-valued function satisfying
\begin{align*}
|\partial^{\alpha} _{x} V(x)| \lesssim \langle x \rangle^{-\mu -\alpha}, \quad -V(x) \gtrsim \langle x \rangle^{-\mu}
\end{align*}
for $\alpha \in \N_0$ and some $\mu \in (0, 2)$ uniformly in $x \in \R$, where we write $\jap{x}=(1+|x|^2)^{\frac{1}{2}}$.
\end{assump}

\begin{remark}
In our proof, the assumptions for $\a=0,1,2,3$ are only needed.
\end{remark}

Under Assumption \ref{assump:V}, the Schr\"odinger operator $P=-\pa_x^2+V(x)$ is self-adjoint with domain $H^2(\R)$ and its spectrum consists of the essential spectrum $[0,\infty)$ and infinitely many negative eigenvalues accumulating to $0$ by \cite[Theorem 1.1]{FS} and \cite[Theorem XIII.6 (Vol IV)]{RS}\footnote{Rigorously speaking, \cite[Theorem XIII.6 (Vol IV)]{RS} deals with the three dimensional case only, however, the proof given there works in any dimension.}. 
Moreover, it turns out from Theorem \ref{2509021637} that the equation $Pu=\l^2 u$ has two linearly independent non-$L^2$ solutions for each $\l\geq 0$. By the uniqueness of solutions for the ODE, we find that $P$ does not have non-negative eigenvalues and hence is absolutely continuous on $[0,\infty)$. Regarding the absence of embedded eigenvalues, we could use Rellich's theorem of the forms \cite{FHHH} for positive eigenvalues and \cite[Theorem 2.4]{FS} for zero eigenvalues, where the latter result requires an additional assumption on the derivative of $V$, see \cite[Assumption 2.3 (2)]{FS}.
The main result in this paper is

\begin{thm}\label{thm:main}
Under Assumption \ref{assump:V}, the dispersive estimate \eqref{eq:dispersive} holds (for $d=1$).

\end{thm}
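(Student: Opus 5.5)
We will prove the estimate through an explicit representation of the spectral measure of $P$ on $[0,\infty)$, followed by oscillatory-integral bounds.

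\textbf{Kernel formula.} For $\l>0$, let $f_\pm(x,\l)$ be the Jost-type solutions of $Pu=\l^2u$ given by Theorem \ref{2509021637}, which behave like outgoing waves as $x\to\pm\infty$. Stone's formula, together with the absence of non-negative eigenvalues (so that $E_{\mathrm{ac}}(P)=1_{[0,\infty)}(P)$), gives
\begin{align*}
e^{-itP}E_{\mathrm{ac}}(P)(x,y)=\frac{1}{\pi i}\int_0^\infty e^{-it\l^2}\,\frac{f_+(\max(x,y),\l)\,f_-(\min(x,y),\l)}{W(\l)}\,\l\,d\l + \text{c.c. term},
\end{align*}
where $W$ is the Wronskian. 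It then suffices to bound this integral by $|t|^{-1/2}$, uniformly in $x$ and $y$.

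\textbf{Splitting.} We introduce a cutoff $\chi(\l)$ and split the integral into a high-energy part ($\l\ge 1$) and a low-energy part ($\l\le1$).

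\emph{High energy.} For $\l\ge1$ the potential is a symbol-type perturbation. WKB gives
\begin{align*}
f_\pm(x,\l)=e^{\pm i\phi_\pm(x,\l)}\,a_\pm(x,\l), \qquad \phi_\pm=\int^x\sqrt{\l^2-V},
\end{align*}
with symbolic amplitudes and $|W|\gtrsim\l$. The phase is then $-t\l^2+\phi_+(x,\l)-\phi_-(y,\l)$. Its second $\l$-derivative is $-2t+O(|x|+|y|)$ times bounded factors; more precisely, $\pa_\l^2\phi$ has size $\lesssim (|x|+|y|)\l^{-3}\cdot$(small). A van der Corput lemma, applied with the amplitude's total variation bounded, yields $|t|^{-1/2}$ once we verify that $|\pa_\l^2(\text{phase})|\gtrsim|t|$ or exploit $\pa_\l\phi\approx x-y$ with standard non-stationary arguments. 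We expect this part to be routine.

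\emph{Low energy.} This is the main obstacle, and here the WKB expression of Theorem \ref{2509021637} is essential. The natural variable is $\sqrt{\l^2-V(x)}$, which for $\l\to0$ behaves like $\jap{x}^{-\mu/2}$ rather than $\l$. So $f_\pm(x,\l)\approx(\l^2-V)^{-1/4}e^{\pm i\int_0^x\sqrt{\l^2-V}}$, uniformly in $\l\in[0,1]$, including a nonzero limit at $\l=0$. Consequently $W(\l)$ tends to a nonzero constant; zero is "regular" because of the attractive tail. The spectral density is then of the form
\begin{align*}
\l\,e^{i\Phi(x,y,\l)}\,b(x,y,\l),\qquad \Phi=\int_y^x\sqrt{\l^2-V(s)}\,ds,
\end{align*}
where the amplitude $b$ is bounded by $\jap{x}^{\mu/4}\jap{y}^{\mu/4}$, which is \emph{unbounded} in $x,y$. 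The loss must be compensated by the oscillation in $\l$. We compute
\begin{align*}
\pa_\l\Phi=\l\int_y^x(\l^2-V)^{-1/2}\,ds \sim \l\,\big(|x|^{1+\mu/2}-\dots\big),
\end{align*}
which vanishes to first order at $\l=0$. Hence the phase $-t\l^2+\Phi$ has a degenerate critical point at $\l=0$ with effective quadratic coefficient $t-\tfrac12\int_y^x(-V)^{-1/2}$.

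We will apply a degenerate stationary phase lemma of the following form: for $\int_0^1 e^{i(-t\l^2+\Phi)}\l\, b\,d\l$, substitute $\sigma=\l^2$ and obtain a bound like $\min\big(1,|t-A(x,y)|^{-1}\big)\cdot\sup|b|$ plus derivative terms, where $A(x,y)\approx\int_y^x(-V)^{-1/2}$. Combined with $\sup|b|\lesssim\jap{x}^{\mu/4}\jap{y}^{\mu/4}$ this must give $|t|^{-1/2}$. Check the critical regime $t\approx A\sim|x|^{1+\mu/2}$ (taking $y$ bounded, say): there the bound $\jap{x}^{\mu/4}$ must be at most $t^{-1/2}\cdot(\ldots)$, which fails naively. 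So we must instead use the true second derivative $\pa_\sigma^2\Phi\sim\int(-V)^{-3/2}\sim|x|^{1+3\mu/2}$ and a van der Corput bound in $\sigma$ of order $|x|^{-(1+3\mu/2)/2}$. Relative to $\jap{x}^{\mu/4}$, this gives $|x|^{-1/2-\mu/2}\lesssim t^{-1/2}$, since $t\sim|x|^{1+\mu/2}$. Away from the critical regime, integration by parts in $\sigma$ gives decay $|t-A|^{-1}\jap{x}^{\mu/4}\cdots$, which is $\lesssim t^{-1/2}$ in both regimes $|x|^{1+\mu/2}\ll t$ and $\gg t$.

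Organizing this case analysis, in terms of the relative sizes of $t$, $|x|^{1+\mu/2}$ and $|y|^{1+\mu/2}$ and the signs of $x$ and $y$, together with control of the amplitude derivatives (which uses $\a\le3$ in Assumption \ref{assump:V}), is the heart of the proof.
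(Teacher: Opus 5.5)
\textbf{Verdict: genuine gap.} Your diagonal analysis is essentially the paper's, but you drop the reflected-wave terms, and for non-even $V$ those need an extra estimate you do not have.

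\textbf{What matches.} Your treatment of the main (transmitted) term follows the paper. Substituting $\sigma=\lambda^2$ turns the degenerate stationary phase with vanishing amplitude (Lemma \ref{lem:Qstph2}) into ordinary stationary phase in $\sigma$ with curvature $\sim M_2$. Your bookkeeping $M_2\sim\jap{x}^{\mu}M_1$, $t\approx M_1/2$ is exactly how the paper closes the critical case.

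\textbf{The gap: reflected waves.} A Jost solution is a single WKB wave only on one half-line. For $x<0$ one has $f_+=(\lambda^2-V)^{-1/4}(\tilde a_{+,+}e^{iy(x,\lambda)}+\tilde a_{+,-}e^{-iy(x,\lambda)})$ with $\tilde a_{+,-}$ not small in general; similarly $f_-$ on $x>0$. So the kernel has four phases, including $\pm\big(\int_0^x+\int_0^{y}\big)\sqrt{\lambda^2-V}$.

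These cannot be absorbed into $b$. A factor like $e^{-2iy(x,\lambda)}$ has $\lambda$-derivative of size $\lambda|x|\jap{x}^{\mu/2}$, which destroys the amplitude bounds your stationary phase needs.

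When $xy>0$, or when $V$ is even (use $y\mapsto -y$), the cross terms reduce to the diagonal analysis. For non-even $V$ and $x>0>y$ they do not. The coefficients become differences $\tilde M_j=\int_0^x|V|^{-\frac{2j-1}{2}}-\int_y^0|V|^{-\frac{2j-1}{2}}$. At the critical time $|t|=|\tilde M_1|/2$ one can have $|\tilde M_2|\ll\jap{x}^{\mu}|\tilde M_1|$, even $\tilde M_2=0$. Then your van der Corput bound in $\sigma$, times $\sup|b|\lesssim\jap{x}^{\mu/4}\jap{y}^{\mu/4}$, no longer gives $|t|^{-1/2}$.

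\textbf{The missing idea.} The cross amplitudes are small: $|b_{+,+}|+|b_{-,-}|\lesssim\lambda^{-2}\max(|x|^{-2},|y|^{-2})$ for $x>0>y$ (see \eqref{eq:ampaddecay}). This rests on Theorem \ref{2508311359}(iv): integrating by parts against $e^{2is}$, using $|\partial_yW|\lesssim\jap{y}^{-3}$, gives $|a_{+,-}(y)|\lesssim\jap{y}^{-2}$ for $y\ge0$. With this, split at $\lambda=t^{-1/2}$.
\begin{itemize}
\item The lower piece is trivially $O(t^{-1/2})$.
\item The upper piece is $O(\sqrt t\,\min(|x|,|y|)^{-2})$, which suffices when $t\lesssim\min(|x|,|y|)^2$.
\item Otherwise the smaller variable is negligible against $t$, and the phase is either non-degenerate or reducible to the diagonal case (Subsection \ref{subsec:off-diagonal}).
\end{itemize}

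\textbf{Two smaller repairs in the diagonal analysis.}

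First, in the non-critical regimes you bound the integral by $|t-A|^{-1}\sup|b|$ with $\sup|b|\lesssim\jap{x}^{\mu/4}\jap{y}^{\mu/4}$, and you check this only for bounded $y$. Near the diagonal ($x=y$ large, so $A=0$, and $1\ll t\ll\jap{x}^{\mu}$) this gives $\jap{x}^{\mu/2}t^{-1}\gg t^{-1/2}$. There you must use the other half of the amplitude bound, $|\lambda b|\lesssim 1$ and $|\partial_\lambda(\lambda b)|\lesssim\lambda^{-1}$. Combine it with non-degenerate stationary phase at the endpoint $\lambda=0$ with curvature $\max(t,M_1)$; this is the first term in Theorem \ref{thm:WKBosc}.

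Second, your Taylor picture (curvature in $\sigma$ of size $\sim M_2$) holds only for $\lambda\lesssim\jap{x}^{-\mu/2}$, not on all of $[0,1]$. For $\jap{x}^{-\mu/2}\ll\lambda\le 1$ the curvature in $\sigma$ drops to $O(\sigma^{-3/2}|x-y|)\ll M_2$, and the phase behaves like $-t\lambda^2+\lambda(x-y)$. So the split must be at the $x$-dependent threshold $R\jap{x}^{-\mu/2}$, with the three-region high-energy analysis (Lemma \ref{lem:largephase}) above it.
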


\begin{remark}
$(1)$ The projection $E_{\mathrm{ac}}(P)$ onto the absolutely continuous subspace is needed; otherwise, the estimates for $|t|\gg 1$ fail due to the existence of eigenvalues. Moreover, we can write $E_{\mathrm{ac}}(P)=I-E_{\mathrm{pp}}(P)$, where $E_{\mathrm{pp}}(P)$ is the orthogonal projection onto the eigenspaces due to the absence of the singular continuous spectrum.

$(2)$ The estimate for $|t|\leq T$ ($T>0$) directly follows from Yajima's result (\cite[Theorem 1.1]{Ya}), which are also true for more general subquadratic potentials. The contribution in our main result is to give a uniform estimate for $|t|\gg 1$.

$(3)$ The decay rate as $|t|\to \infty$ is expected to be optimal, even though the optimal exponent for the local $L^2$ time decay estimate \eqref{eq:localtime} is $N=1$ \cite{FS,Y-1}.

$(4)$ While Assumption \ref{assump:V} imposes that $V$ is globally negative, the conclusion of Theorem \ref{thm:main} remains valid even if $V$ is non-negative on a bounded interval. The proof is based on a combination of our method and the regularity theory of ODEs with respect to parameters. We provide an outline of the proof in Section \ref{251271359}.
\end{remark}

By the result in \cite{KT} and \cite[Theorem 1.4]{H}, we have the following usual and orthonormal Strichartz estimates:
\begin{corollary}
Let $(q,r), (\tilde{q},\tilde{r})\in [2,\infty]^2$ satisfy the admissible condition $\frac{2}{q}+\frac{1}{r}=\frac{1}{2}$ and let $\b=\frac{2r}{r+2}$. Then
\begin{align*}
&\|e^{-itP}E_{\mathrm{ac}}(P)u_0\|_{L^q_tL^r_x}\lesssim \|u_0\|_{L^2},\quad \left\|\int_0^te^{-i(t-s)P}E_{\mathrm{ac}}(P)F(s)ds\right\|_{L^q_tL^r_x}\lesssim\|F\|_{L^{\tilde{q}^*}_tL^{\tilde{r}^*}_x},\\
&\left\|\sum_{j=1}^{\infty}\n_j|e^{-itP}E_{\mathrm{ac}}(P)f_j|^2 \right\|_{L^{\frac{q}{2}}_tL^{\frac{r}{2}}_x}\lesssim \|\n\|_{\ell^{\b}}
\end{align*}
for all $u_0\in L^2(\R)$, all $F\in L^{\tilde{q}^*}(\R_t;L^{\tilde{r}^*}(\R_x))$, and all orthonormal system $\{f_j\}_{j=1}^{\infty}\subset L^2(\R)$ and complex-valued sequences $\n=\{\n_j\}_{j=1}^{\infty}$. Here the orthonormal estimates in the second line are true only when $r<\infty$ holds.

\end{corollary}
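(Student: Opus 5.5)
The corollary follows from Theorem \ref{thm:main} by abstract results; we only need to check that the hypotheses of those abstract theorems hold with the right operator family. Set $U(t)=e^{-itP}E_{\mathrm{ac}}(P)$.

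\textbf{Energy estimate.} Since $E_{\mathrm{ac}}(P)$ is an orthogonal projection commuting with $P$, we have $\|U(t)u\|_{L^2}\le\|u\|_{L^2}$. Moreover, for all $t,s$,
\begin{align*}
U(t)U(s)^*=e^{-i(t-s)P}E_{\mathrm{ac}}(P).
\end{align*}

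\textbf{Untruncated decay.} By Theorem \ref{thm:main} with $d=1$,
\begin{align*}
\|U(t)U(s)^*\|_{L^1\to L^\infty}\lesssim |t-s|^{-1/2}.
\end{align*}
These are exactly the hypotheses of Keel--Tao \cite{KT} with decay exponent $\sigma=\frac12$. The $\sigma$-admissible pairs are $\frac{1}{q}+\frac{\sigma}{r}=\frac{\sigma}{2}$, i.e.\ $\frac{2}{q}+\frac1r=\frac12$. The endpoint $(q,r)=(4,\infty)$ is non-endpoint in the Keel--Tao sense because $\sigma<1$, so no exclusion is needed. Keel--Tao then gives the homogeneous estimate and the retarded inhomogeneous estimate for all admissible $(q,r)$ and $(\tilde q,\tilde r)$, where the Duhamel kernel is $U(t)U(s)^*=e^{-i(t-s)P}E_{\mathrm{ac}}(P)$. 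This yields the first line of the corollary.

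\textbf{Orthonormal estimate.} For the second line, we invoke \cite[Theorem 1.4]{H}. We expect it to be formulated for abstract families of the same type: the $L^2$-boundedness above, plus the dispersive bound $\|U(t)U(s)^*\|_{L^1\to L^\infty}\lesssim|t-s|^{-d/2}$. Under these hypotheses it yields the orthonormal Strichartz inequality with $\beta=\frac{2r}{r+2}$ in the range $r<\infty$, i.e.\ $q>4$ in one dimension. The standard route there is the Frank--Sabin duality principle. One reduces to Schatten-class bounds of the form
\begin{align*}
\|W_1U(t)U(s)^*W_2\|_{\mathfrak{S}^{2\beta'}}.
\end{align*}
These are obtained by complex interpolation between the $L^2$ bound and the $L^1\to L^\infty$ bound, applied to an analytic family $(t-s)^{z}$-weighted kernels. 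The argument uses only the two abstract properties, which we have verified.

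\textbf{Main point to check.} The only substantive issue is confirming that \cite{H} is stated for such abstract operators rather than specifically for $e^{it\Delta}$. It must be able to handle the projection $E_{\mathrm{ac}}(P)$, which is not a Fourier multiplier. If \cite{H} needs the group property, we use $U(t)U(s)^*=U(t-s)E_{\mathrm{ac}}(P)$ together with $E_{\mathrm{ac}}(P)^2=E_{\mathrm{ac}}(P)$, which supplies it.
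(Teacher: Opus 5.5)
Your proposal is correct and follows the paper's own argument: the paper obtains the corollary by combining the dispersive bound of Theorem~\ref{thm:main} (via $U(t)U(s)^*=e^{-i(t-s)P}E_{\mathrm{ac}}(P)$) with Keel--Tao \cite{KT} at $\sigma=\frac12$ for the first line, and with \cite[Theorem 1.4]{H} for the orthonormal estimates. There is one small slip in your heuristic sketch, which does not matter because that step is a citation: in the Frank--Sabin duality principle the Schatten exponent for $W_1TT^*W_2$ is $\beta'$, not $2\beta'$; the exponent $2\beta'$ belongs to $W_1T$.
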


\subsection{Idea of the proof}\label{subsec:Idea}
As mentioned before, a perturbation argument such as in \cite{B,BG,GS,HS} does not work since the potential function $V$ decays slowly at infinity. To go around this, we use the spectral representation of the time propagator
\begin{align*}
e^{-itP} E_{\mathrm{ac}}(P)=\int_{0}^{\infty} e^{-it\lambda^2} \tilde{E} (\lambda) d\lambda,\quad \text{where}\quad \tilde{E} (\lambda) := \frac{R(\lambda^2 +i0)-R(\lambda^2 -i0)}{2\pi i} \cdot 2\lambda,
\end{align*}
and $R(\l^2\pm i0):=(P-\l^2\mp i0)^{-1}$ are the outgoing/incoming resolvents of $P$. The Sturm-Liouville theory in the one-dimensional case allows us to deduce WKB expressions of the integral kernel of $\tilde{E}(\l)$ (see Theorem \ref{2509041030}):
\begin{align*}
\tilde{E} (\lambda, x, x') = \sum_{\sigma_{1}, \sigma_{2} \in \{\pm\}} b_{\sigma_{1}, \sigma_{2}} (\lambda, x, x') e^{iS_{\sigma_{1}, \sigma_{2}} (\lambda, x, x')},
\end{align*}
where $S_{\sigma_{1}, \sigma_{2}}$ and $b_{\sigma_{1}, \sigma_{2}}$ satisfy $S_{\sigma_{1}, \sigma_{2}} (\lambda, x, x') = \sigma_{1} \int_{0}^{x} \sqrt{\lambda^2 -V(s)} ds + \sigma_{2} \int_{0}^{x'} \sqrt{\lambda^2 -V(s)} ds$ and $|\partial^{\alpha} _{\lambda} b_{\sigma_{1}, \sigma_{2}} (\lambda, x, x')| \lesssim \min (\lambda^{1-\alpha} \langle x \rangle^{\frac{\mu}{4}} \langle x' \rangle^{\frac{\mu}{4}}, \lambda^{-\alpha})$. In this way, we can reduce the proof of the dispersive estimates to that of a time decay estimate for an oscillatory integral of the form
\begin{align}\label{eq:outlineint}
\int_0^{\infty}b_{\s_1,\s_2}(\l)e^{i\Phi_{\s_1,\s_2}(\l)} d\l,\quad \Phi_{\s_1,\s_2}(\l):=-t\l^2+S_{\s_1,\s_2}(\l,x,x')
\end{align}
uniformly in $x,x'\in \R$. It essentially suffices to consider the cases $t>0$, $x\geq x'$, $|x|\geq |x'|$. By symmetry, we have only to deal with $(\s_1,\s_2)=(+,-)$ or $(\s_1,\s_2)=(+,+)$.

First, we consider the $(+,-)$-case.
When $\l$ is sufficiently large compared to $\jap{x}^{-\m/2}$, namely $\l\gg \jap{x}^{-\m/2}$, then the phase function essentially becomes $\Phi_{+,-}\approx -t\l^2+\l(x-x')$, which is the same phase function  for the free propagator $e^{it\pa_x^2}$ in the Fourier transform side. Applying a quantitative version of the stationary phase theorem (Lemma \ref{Lem:Qstph1}), we obtain the dispersive estimates in this case.
On the other hand, when $\l$ is small compared to $\jap{x}^{-\m/2}$, namely $\l\lesssim \jap{x}^{-\m/2}$, the situation becomes different. To see this, we perform Taylor's expansion of $\Phi_{+,-}=-t\l^2+\int_{x'}^x\sqrt{\l^2-V(s)}ds$ at $\l=0$ to obtain
\begin{align}\label{eq:lowTaylor}
\Phi_{+,-}=\left(-t+\frac{M_1}{2}\right)\l^2+\frac{1}{4}M_2\l^4+O(\l^5)\quad (\l\to 0),
\end{align}
where we set $M_j=\int_{x'}^x|V(s)|^{-\frac{2j-1}{2}}ds$ ($j=1,2$). This expansion implies that the phase function $\Phi_{+,-}$ is degenerate when $t=M_1/2$, however, its fourth derivative does not vanish. Using this and the fact that the amplitude $b_{+,-}$ vanishes at $\l=0$: $b_{+,-}=O(\jap{x}^{\frac{\m}{2}}\l)$, we can apply a degenerate stationary phase theorem (Lemma \ref{lem:Qstph2}) and find that the integral in \eqref{eq:outlineint} decays like $O(\jap{x}^{\frac{\m}{2}}M_2^{-\frac{1}{2}})$. Finally, we observe that $M_2\approx \jap{x}^{\m}M_1$ holds (by Lemma \ref{lem:phasederiunivlow}). Consequently, we conclude that the integral in \eqref{eq:outlineint} is $O(t^{-\frac{1}{2}})$ when $t=M_1/2$. Otherwise, we can use the stationary phase theorem instead. When $V$ is an even function, the proof is completed since $\Phi_{+,+}(x,x')=\Phi_{+,-}(x,-x')$ in this case.

When $V$ is not an even function, then we need more works to treat the $(+,+)$-case. Performing Taylor's expansion of $\Phi_{+,+}=-t\l^2+\int_{0}^x\sqrt{\l^2-V(s)}ds+\int_0^{x'}\sqrt{\l^2-V(s)}ds$ at $\l=0$ to obtain a similar expansion as \eqref{eq:lowTaylor} replacing $M_j$ by $\tilde{M}_j=\int_{0}^x|V(s)|^{-\frac{2j-1}{2}}ds+\int_{0}^{x'}|V(s)|^{-\frac{2j-1}{2}}ds$ ($j=1,2$). For an anisotropic potential $V(x)=-\jap{x}^{-\m}$ for $x\gg 1$ and $V(x)=-2\jap{x}^{-\m}$ for $x\ll -1$, we have $\tilde{M}_1\approx \jap{x}^{1+\frac{\m}{2}}-2^{\frac{1}{2}}\jap{x'}^{1+\frac{\m}{2}}$ and $\tilde{M}_2\approx \jap{x}^{1+\frac{3\m}{2}}-2^{\frac{3}{2}}\jap{x'}^{1+\frac{3\m}{2}}$  for $x\gg 1$ and $x'\ll -1$. This implies that $\tilde{M}_2\ll \jap{\max(|x|,|x'|)}^{\m}\tilde{M}_1$ may occur and therefore the method for $\Phi_{+,-}$ described above does not work here. To overcome this difficulty, we use refined bound $|b_{+,-}(\l)|\lesssim \l^{-2}\max(|x|^{-2},|x'|^{-2})$  for $x>0$ and $x'<0$ proved in Theorem \ref{2509041030}. When the worst scenario $t=\tilde{M}_1/2$ occurs, then we split the integral \eqref{eq:outlineint} into 
\begin{align*}
\int_0^{\sqrt{t}^{-1}}b_{+,-}(\l)e^{i\Phi_{+,-}(\l)} d\l+\int_{\sqrt{t}^{-1}}^{\infty}b_{+,-}(\l)e^{i\Phi_{+,-}(\l)} d\l.
\end{align*}
We can see that the first integral is bounded by $t^{-\frac{1}{2}}$ and the second integral is bounded by 
\begin{align*}
\int_{\sqrt{t}^{-1}}^{\infty}\l^{-2}\max(|x|^{-2},|x'|^{-2}) d\l=\sqrt{t}\max(|x|^{-2},|x'|^{-2}).
\end{align*}
Since we have assumed $t=\tilde{M}_1/2$ and $\tilde{M}_2\ll \jap{\max(|x|,|x'|)}^{\m}\tilde{M}_1$, this bound is sufficient to prove the dispersive estimate. For the detail, see Subsection \ref{subsec:off-diagonal}.

\subsection{Further observation}

\underline{\textbf{Derivation of the local decay estimate}}: Our method described above recovers Yafaev's result \cite{Y-1}, which states that \eqref{eq:localtime} holds for $N=1$. To see this, we observe $b_{+,-}=O(\l)$ uniformly in $x,x'\in K$, where $K\subset \R$ is a fixed compact set. As mentioned before, we have $\Phi_{+,-}\approx -t\l^2+\l(x-x')$ for $\l\gg \jap{x}^{-\m/2}$ and \eqref{eq:lowTaylor} for $\l\lesssim \jap{x}^{-\m/2}$. In both cases, the second derivative of $\Phi_{+,-}$ with respect to $\l$ does not vanish for $t\gg 1$ and $x,x'\in K$ since $M_1=\int_{x'}^x|V(s)|^{-\frac{1}{2}}ds$ is uniformly bounded there. Now the stationary phase theorem with a vanishing symbol implies that the integral in \eqref{eq:outlineint} is $O(t^{-1})$ for $x,x'\in K$.

\noindent\underline{\textbf{Dispersive estimate for positive repulsive potentials}}: When we consider positive repulsive $P=-\pa_x^2+c\jap{x}^{-\mu}$ with $c>0$ and $0<\m<2$, we could use our strategy in the regime $\l\gg \max(\jap{x}^{-\mu},\jap{x'}^{-\mu})$. On the other hands, for $\l\ll \max(\jap{x}^{-\mu},\jap{x'}^{-\mu})$, Yafaev's result \cite[\S 7]{Y} (or \cite[Theorem 1.1]{N1}) suggests that the spectral projection should not oscillate as $\l\to 0$. Thus, the usual stationary phase theorem with the phase $-t\l^2$ would prove the dispersive estimate in this regime (see also \cite[\S 2.1]{BTVZ}).
When $\l\sim \max(\jap{x}^{-\mu},\jap{x'}^{-\mu})$, the phase function $\Phi_{+,-}$ has turning points, which are also called caustics in the WKB theory. We could use Airy function approximation there as in \cite[\S 2.2]{BTVZ}. It would be an interesting problem to justify this heuristic argument.

\noindent\underline{\textbf{Generalization to the higher dimensional cases}}: The dispersive estimate \eqref{eq:dispersive} in our model (e.g. $V$ satisfies Assumption \ref{assump:V}) is known to fail\footnote{The authors are grateful to Haruya Mizutani for pointing out it.} at least when $d\geq 3$. In fact, Fournais-Skibsted \cite[Theorem 1.4]{FS} showed that $|t|^{-1}\lesssim \|\chi \g(P)e^{-itP}E_{\mathrm{ac}}(P)\chi\|_{L^2\to L^2}$ as $|t|\to \infty$ for some $\chi\in C_c^{\infty}(\R^d)$ and $\g\in C_c^{\infty}(\R)$ supported near $0$. On the other hand, if $V$ is sufficiently smooth, then the Sobolev embedding theorem shows $|t|^{-1}\lesssim \|\chi \g(P)e^{-itP}E_{\mathrm{ac}}(P)\chi\|_{L^2\to L^2}\lesssim \|e^{-itP}E_{\mathrm{ac}}(P)\|_{L^1\to L^{\infty}}$. Thus, the possible time decay rate is at most $|t|^{-1}$ in any dimension. Nevertheless, there is a possibility to have Strichartz estimates for some pairs $(q,r)$, which are more important in applications to non-linear theory. This is deferred to future works.

\subsection{Notations}\label{2510191800}
We write $\N =\{1, 2, \dots\}$ and $\N_0 = \N \cup \{0\}$.

\subsection*{Acknowledgment}
AH is partially supported by FoPM, WINGS Program, the University of Tokyo and JSPS Research Fellowship for Young Scientists KAKENHI Grant Number JP25KJ0736. KT is partially supported by JSPS KAKENHI Grant Number JP23K13004. He is grateful to Haruya Mizutani for suggesting this problem.

\section{Preliminary: Quantitative estimates of oscillatory integrals}\label{section:prelim}

\subsection{Partition of unity}

The next lemma is very useful for our purpose. The proof is a direct adaption of Leibniz's rule and omitted here. 

\begin{lemma}[Partition of unity]\label{lem:PUest}
Let $M>0$ and $\chi\in C_c^{\infty}(\R;[0,1])$ such that $\chi(\l)=1$ for $|\l|\leq 1$ and $\chi(\l)=0$ for $|\l|\geq 2$. Then there exists $c>0$ \textbf{depending only on $\chi$} such that
\begin{align*}
&\sum_{j=1}^2\left(\|a_j\|_{L^{\infty}}+ \|\l\pa_{\l}a_j\|_{L^{\infty}}\right)\leq  c(\|a\|_{L^{\infty}}+ \|\l\pa_{\l}a\|_{L^{\infty}}),\\
&\sum_{j=1}^2\left(\|\l^{-1}a_j\|_{L^{\infty}}+ \|\pa_{\l}a_j\|_{L^{\infty}}\right)\leq  c(\|\l^{-1}a\|_{L^{\infty}}+ \|\pa_{\l}a\|_{L^{\infty}}),
\end{align*}
where $a_1(\l):=\chi(M\l)a(\l)$ and $a_2(\l):=(1-\chi(M\l))a(\l)$, for all $a\in C^1((0,\infty))$ when the right hand side is bounded. 
\end{lemma}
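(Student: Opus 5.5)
The statement is a direct Leibniz-rule computation. The plan is to estimate each $a_j$ and its derivative pointwise in terms of $a$, using only the fixed cutoff $\chi$. The key observation is that the derivative of $\chi(M\l)$ behaves like $\l^{-1}$ uniformly in $M$, which is exactly the scale-invariance the weights $\l\pa_\l$ and $\l^{-1}$ are designed to absorb.

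\emph{Bounds on the cutoff.} Since $0\le\chi\le1$, we have $|\chi(M\l)|\le1$ and $|1-\chi(M\l)|\le1$, so
\[
\|a_j\|_{L^\infty}\le\|a\|_{L^\infty}\quad\text{and}\quad \|\l^{-1}a_j\|_{L^\infty}\le\|\l^{-1}a\|_{L^\infty}.
\]
For the derivatives, $\pa_\l a_1=M\chi'(M\l)a+\chi(M\l)\pa_\l a$, and $\pa_\l a_2$ is the same expression with a minus sign on the first term. Because $\chi'$ is supported in $1\le|s|\le2$, we have $M\l\le2$ wherever $\chi'(M\l)\ne0$. Hence
\[
\l M|\chi'(M\l)|\le \sup_s|s\chi'(s)|\le 2\|\chi'\|_{L^\infty}=:c_0,
\]
and $M|\chi'(M\l)|\le c_0\l^{-1}$ with the same constant, independent of $M$.

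\emph{First inequality.} Multiply by $\l$:
\[
|\l\pa_\l a_j|\le \l M|\chi'(M\l)|\,|a|+|\l\pa_\l a|\le c_0\|a\|_{L^\infty}+\|\l\pa_\l a\|_{L^\infty}.
\]

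\emph{Second inequality.} Write $|a|=\l|\l^{-1}a|$ to get
\[
|\pa_\l a_j|\le M|\chi'(M\l)|\,\l\,|\l^{-1}a|+|\pa_\l a|\le c_0\|\l^{-1}a\|_{L^\infty}+\|\pa_\l a\|_{L^\infty}.
\]

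Summing over $j=1,2$ gives both inequalities with $c=2(1+c_0)$, which depends only on $\chi$. The only point needing care is that $M$ appears solely through the combination $M\l$ on the support of $\chi'$, which is what makes the constant uniform in $M$; there is no real obstacle here.
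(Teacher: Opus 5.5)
Your proof is correct and is essentially the Leibniz-rule computation the paper has in mind; the paper omits the details. You have correctly identified the one point needing care, namely that $\lambda M|\chi'(M\lambda)|\le \sup_s|s\chi'(s)|\le 2\|\chi'\|_{L^\infty}$ uniformly in $M$, and this is what makes the constant depend only on $\chi$.
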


\subsection{Estimates for oscillatory integrals}

Here we consider the oscillatory integral of the form
\begin{align*}
I(a):=\int_0^{\infty}a(\l)e^{i\Phi(\l)}d\l,
\end{align*}
where $\Phi$ is a real-valued $C^2$-function and $a$ is a $C^1$-function on $(0,\infty)$. The purpose of this section is to give some quantitative estimates of $I(a)$ based on the stationary phase argument. 

\begin{lemma}[Quantitative stationary phase]\label{Lem:Qstph1}
Let $a$ be a $C^1$-function on $(0,\infty)$.
Suppose that $M,C_1,C_2,m>0$ satisfy either 
\begin{align}\label{eq:stphaseas1}
|\pa_{\l}\Phi(\l)|\geq C_1m,\quad |\pa_{\l}^2\Phi(\l)|\leq C_2\frac{m}{\l}\quad \text{for $\l\in \supp a$}, \quad \supp a\subset [0,m/M]
\end{align}
or
\begin{align}\label{eq:stphaseas2}
|\pa_{\l}\Phi(\l)|\geq C_1M|\l-m|,\quad |\pa_{\l}^2\Phi(\l)|\leq C_2M\quad \text{for $\l\in \supp a$}.
\end{align}
Then, there exists $C>0$ \textbf{depending only on $C_1$ and $C_2$} such that
\begin{align}\label{eq:Qststate1}
|I(a)|\leq M^{-\frac{1}{2}}C( \|a\|_{L^{\infty}}+ \|\l\pa_{\l}a\|_{L^{\infty}})
\end{align}
when the right hand side is bounded.

\end{lemma}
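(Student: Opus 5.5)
We prove the two cases separately. In each case we split the integral into a region near the critical point, handled by measure bounds, and a region away from it, handled by integration by parts.

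\textbf{Case \eqref{eq:stphaseas2}.} Write $A:=\|a\|_{L^{\infty}}+\|\l\pa_{\l}a\|_{L^{\infty}}$. We split $I(a)=I_1+I_2$ using a cutoff $\chi(M^{1/2}(\l-m))$ and its complement $1-\chi(M^{1/2}(\l-m))$.

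The near part is trivial: it is supported on an interval of length at most $4M^{-1/2}$, so $|I_1|\le 4M^{-1/2}\|a\|_{L^{\infty}}$.

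On the far part we have $|\l-m|\ge M^{-1/2}$, hence $|\pa_\l\Phi|\ge C_1M|\l-m|\neq0$. We integrate by parts once, writing $e^{i\Phi}=(i\Phi')^{-1}\pa_\l e^{i\Phi}$. The boundary terms vanish, or are controlled, provided $a$ has the support/decay implicit in the finiteness assumptions; at $\l\to0$ there is no problem since $\Phi'\neq0$ there when $m>M^{-1/2}$. This produces three terms.
\begin{enumerate}
\item The term $a\,\chi'(\cdot)M^{1/2}/\Phi'$ is bounded by $\|a\|_{L^{\infty}}M^{1/2}\int_{|\l-m|\sim M^{-1/2}}(M|\l-m|)^{-1}\,d\l\lesssim M^{-1/2}\|a\|_{L^{\infty}}$.
\item The term $a\,\Phi''/(\Phi')^2$ is bounded by $\int_{|\l-m|\ge M^{-1/2}} C_2M(C_1M|\l-m|)^{-2}\,d\l\lesssim M^{-1/2}$.
\item The term $\pa_\l a/\Phi'$ is the delicate one, because we only control $\l\pa_\l a$. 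We write $|\pa_\l a|\le A/\l$, so the integral to bound is $\int_{|\l-m|\ge M^{-1/2}} A\,\l^{-1}(M|\l-m|)^{-1}\,d\l$.
\end{enumerate}

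We bound the third term by splitting the region $|\l-m|\ge M^{-1/2}$ further.
\begin{itemize}
\item For $\l\ge m/2$, we have $\l^{-1}\lesssim |\l-m|^{-1}+m^{-1}$. The piece $|\l-m|^{-1}$ gives $\int M^{-1}|\l-m|^{-2}\lesssim M^{-1/2}$. The piece $m^{-1}$ occurs only where $\l\approx m$ with $|\l-m|\ge M^{-1/2}$; that region is nonempty only if $m\gtrsim M^{-1/2}$, and then $\l^{-1}\lesssim |\l-m|^{-1}$ there anyway.
\item For $\l<m/2$, we have $|\l-m|\sim m$. If $m\le 2M^{-1/2}$, this region is already contained in the near cutoff. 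Otherwise we avoid the logarithmic divergence at $\l=0$ by not integrating by parts near $0$. Instead we insert a further cutoff at scale $\l\sim M^{-1/2}$ using Lemma \ref{lem:PUest}. The piece $\l\lesssim M^{-1/2}$ is bounded trivially by $M^{-1/2}\|a\|_{L^{\infty}}$. On the piece $\l\gtrsim M^{-1/2}$ we apply a dyadic decomposition $\l\sim 2^k$, integrating by parts on each dyadic block. Each block contributes at most $\min(2^k,(Mm)^{-1})A$ from the $\pa_\l a$ term plus similar terms, and $\sum_k$ over $M^{-1/2}\lesssim 2^k\lesssim m$ of $\min(2^k,(Mm)^{-1})$ is $\lesssim M^{-1/2}$ because $(Mm)^{-1}\le M^{-1/2}$.
\end{itemize}

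\textbf{Case \eqref{eq:stphaseas1}.} The phase has no critical point, and the support condition makes everything scale-invariant. We decompose dyadically, $\l\sim 2^k\le m/M$, using cutoffs $\psi(2^{-k}\l)$ (justified by Lemma \ref{lem:PUest}). We bound the pieces as follows.
\begin{itemize}
\item The blocks with $2^k\le M^{-1/2}$ are bounded trivially, and their sum is $\lesssim M^{-1/2}\|a\|_{L^{\infty}}$.
\item For the other blocks we integrate by parts once. On the block $\l\sim 2^k$ the derivatives of the amplitude are $\lesssim A\,2^{-k}$, while $|\Phi'|\ge C_1m$ and $|\Phi''|/|\Phi'|^2\le C_2(C_1^2m\l)^{-1}$. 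Each block therefore contributes $\lesssim A/m$, so the sum over $k$ would give a logarithm.
\end{itemize}

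To avoid that logarithm we note that the support condition $\l\le m/M$ gives $1/m\le 1/(M\l)$. Then the integral of $A(m\l)^{-1}$ over the whole range $M^{-1/2}\le\l\le m/M$ is not summable directly, so instead we integrate by parts only once globally on $[M^{-1/2},m/M]$. The $\Phi''$ term is bounded by $\int C_2(C_1^2m\l)^{-1}\,d\l$, and the $\pa a$ term by $\int A/(\l m)\,d\l$. Both are $\lesssim m^{-1}\log(m M^{-1/2})$, which is $\le M^{-1/2}$ after optimizing $\sup_{s\ge1} s^{-1}\log s$ with $s=mM^{-1/2}$. If $m\le M^{1/2}$, the whole support $[0,m/M]$ has length $\le M^{-1/2}$ and the trivial bound suffices. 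The boundary term at $\l=M^{-1/2}$ is $\lesssim\|a\|/m\le M^{-1/2}$ whenever $m\ge M^{1/2}$.

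The main obstacle is this logarithmic loss at small $\l$ caused by controlling only $\l\pa_\l a$. In both cases we handle it by splitting at $\l\sim M^{-1/2}$ and using either the support restriction or the size of $|\Phi'|$ to absorb the logarithm.
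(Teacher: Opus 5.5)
Your proposal follows the same strategy as the paper. You cut off at scale $M^{-1/2}$ around $\lambda=0$ (and around $\lambda=m$ under \eqref{eq:stphaseas2}), bound those pieces by the length of their support, and integrate by parts once on the rest. The resulting factor $m^{-1}\log(mM^{\mp 1/2})$ is then absorbed by the boundedness of $s^{-1}\log s$. Under \eqref{eq:stphaseas1} your final argument is the paper's, and the dyadic detour can simply be deleted. Under \eqref{eq:stphaseas2}, however, two steps are wrong as written, although both can be repaired with tools you already use.

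\textbf{Subcase $M^{-1/2}<m\le 2M^{-1/2}$.} Here the set $\{\lambda<m/2\}$ lies in the support of $\chi(M^{1/2}(\lambda-m))$ but not in its plateau $\{|\lambda-m|\le M^{-1/2}\}$. So $1-\chi(M^{1/2}(\lambda-m))$ may be nonzero on $(0,m-M^{-1/2})$, all the way down to $\lambda=0$. There your bound for the term $\partial_\lambda a/\partial_\lambda\Phi$ is a multiple of $(Mm)^{-1}\int_0 \lambda^{-1}\,d\lambda$, which diverges. You need the cutoff $\chi(M^{1/2}\lambda)$ in this subcase too; the paper inserts it in every case. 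Since $m/2\le M^{-1/2}$, nothing of $\{\lambda<m/2\}$ survives after removing it.

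\textbf{The dyadic sum.} On the blocks with $M^{-1/2}\lesssim 2^k\lesssim m$ (where $m>2M^{-1/2}$) you have $2^k\gtrsim (Mm)^{-1}$. Hence $\sum_k\min(2^k,(Mm)^{-1})$ is of order $(Mm)^{-1}\log(mM^{1/2})$. The inequality $(Mm)^{-1}\le M^{-1/2}$ by itself only gives $M^{-1/2}\log(mM^{1/2})$. The correct reason is that $(Mm)^{-1}\log(mM^{1/2})=M^{-1/2}s^{-1}\log s$ with $s=mM^{1/2}\ge 2$, which is the same trick you use under \eqref{eq:stphaseas1}. Integrating $(\lambda Mm)^{-1}$ directly over $[M^{-1/2},m/2]$ gives this without any dyadic decomposition, which is how the paper proceeds.

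On the positive side, your separate estimate of the cutoff-derivative term $a\,M^{1/2}\chi'(M^{1/2}(\lambda-m))/\partial_\lambda\Phi$ is more careful than the paper. The paper routes $a_4$ through Lemma \ref{lem:PUest}, but that lemma concerns cutoffs $\chi(M\lambda)$ centred at $0$. For the shifted cutoff, $\lambda\partial_\lambda$ produces a factor as large as $mM^{1/2}+2$. So $\|\lambda\partial_\lambda a_4\|_{L^\infty}$ is not uniformly controlled by $\|a\|_{L^\infty}+\|\lambda\partial_\lambda a\|_{L^\infty}$, whereas your direct bound of order $C_1^{-1}M^{-1/2}\|a\|_{L^\infty}$ is correct.
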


\begin{remark}
This lemma is a quantitative version of the stationary phase theorem, which asserts that if $\Phi'(0)=0$ and $\Phi^{''}(0)\sim M$ holds, and $a$ is a smooth function supported around $0$, then $I(a)=O(M^{-\frac{1}{2}})$.
\end{remark}

\begin{proof}

Let $\chi\in C_c^{\infty}(\R;[0,1])$ such that $\chi(\l)=1$ for $|\l|\leq 1$ and $\chi(\l)=0$ for $|\l|\geq 2$. 

\noindent$(i)$ We consider the case where \eqref{eq:stphaseas1} holds.
When $m\leq M^{\frac{1}{2}}$, then $|I(a)|\leq  \frac{m}{M}\|a\|_{L^{\infty}}\leq M^{-\frac{1}{2}}\|a\|_{L^{\infty}}$ by the support property of $a$. Thus we assume 
\begin{align*}
m\geq M^{\frac{1}{2}}.
\end{align*}
in the following.
Set
\begin{align*}
a_1(\l):=&a(\l)\chi(M^{\frac{1}{2}}\l),\quad a_2(\l):=a(\l)(1-\chi(M^{\frac{1}{2}}\l)).
\end{align*}
Thanks to Lemma \ref{lem:PUest}, it suffices to estimate $I(a_1)$ and $I(a_1)$ separately.

Clearly, we have $|I(a_1)|\leq 2M^{-\frac{1}{2}}\|a\|_{L^{\infty}}$ by the support property of $a_1$. On the other hands, the integration by parts yields
\begin{align*}
|I(a_2)|=&\left|i\int_{M^{-\frac{1}{2}}}^{\frac{m}{M}}\left(\frac{\pa_{\l}^2\Phi(\l)a_2(\l)}{|\pa_{\l}\Phi(\l)|^2}+\frac{\pa_{\l}a_2(\l)}{|\pa_{\l}\Phi(\l)|}  \right)   e^{i\Phi(\l)}d\l\right|\\
\leq&\int_{M^{-\frac{1}{2}}}^{\frac{m}{M}}\frac{C_2m\l^{-1}}{C_1^2m^2}d\l\|a_2\|_{L^{\infty}}+  \int_{M^{-\frac{1}{2}}}^{\frac{m}{M}}\frac{1}{\l C_1m}     d\l\|\l\pa_{\l}a_2\|_{L^{\infty}}\\
=&m^{-1}\left(C_2C_1^{-2}\log\left(mM^{-\frac{1}{2}}\right)\|a_2\|_{L^{\infty}}+C_1^{-1}\log\left(mM^{-\frac{1}{2}}\right)\|\l\pa_{\l}a_2\|_{L^{\infty}} \right).
\end{align*}
Since $m\geq M^{\frac{1}{2}}$ and $c_0:=\sup_{0<s\leq 1}s\log s^{-1}<\infty$, we have $m^{-1}\log(mM^{-\frac{1}{2}})\leq c_0M^{-\frac{1}{2}}$ and hence
\begin{align*}
|I(a_2)|\leq c_0M^{-\frac{1}{2}}(C_2C_1^{-2}\|a_2\|_{L^{\infty}}+C_1^{-1}\|\l\pa_{\l}a_2\|_{L^{\infty}}).
\end{align*}
Now we conclude $|I(a)|\leq |I(a_1)|+|I(a_2)|\lesssim M^{-\frac{1}{2}}( \|a\|_{L^{\infty}}+ \|\l\pa_{\l}a\|_{L^{\infty}})$.

\vspace{2mm}

\noindent$(ii)$ Next, we turn to the case where \eqref{eq:stphaseas2} holds.
Set
\begin{align*}
a_1(\l):=&a(\l)\chi(M^{\frac{1}{2}}\l),\quad a_2(\l):=a(\l)(1-\chi(M^{\frac{1}{2}}\l)),\\
a_3(\l):=&a_2(\l)\chi(M^{\frac{1}{2}}(\l-m)),\quad a_4(\l):=a_2(\l)(1-\chi(M^{\frac{1}{2}}(\l-m))).
\end{align*}
We note $|I(a_1)|,|I(a_3)|\leq 2M^{-\frac{1}{2}}\|a\|_{L^{\infty}}$ by the support properties of $a_1$ and $a_3$.


First, we consider the case where $m^2M\leq \frac{1}{2}$. By Lemma \ref{lem:PUest}, it suffices to estimate $I(a_2)$ since $a=a_1+a_2$. We observe
\begin{align*}
|\pa_{\l}\Phi(\l)|\geq C_1M|\l-m|\geq \frac{1}{2}C_1M|\l|\quad (\text{for $\l\in \supp a_2$}).
\end{align*}
The integration by parts yields
\begin{align*}
|I(a_2)|=&\left|i\int_{M^{-\frac{1}{2}}}^{\infty}\left(\frac{\pa_{\l}^2\Phi(\l)a_2(\l)}{|\pa_{\l}\Phi(\l)|^2}+\frac{\pa_{\l}a_2(\l)}{|\pa_{\l}\Phi(\l)|}  \right)   e^{i\Phi(\l)}d\l\right|\\
\leq&\int_{M^{-\frac{1}{2}}}^{\infty}\frac{4C_2M}{C_1^2M^2\l^2}d\l\|a_2\|_{L^{\infty}}+  \int_{M^{-\frac{1}{2}}}^{\infty}\frac{2}{\l\cdot C_1M\l}     d\l\|\l\pa_{\l}a_2\|_{L^{\infty}}\\
=&M^{-\frac{1}{2}}\left(\frac{4C_2}{C_1^2}\|a_2\|_{L^{\infty}}+\frac{2}{C_1}\|\l\pa_{\l}a_2\|_{L^{\infty}} \right),
\end{align*}
which proves \eqref{eq:Qststate1} when $m^2M\geq \frac{1}{2}$.

Next we consider $m^2M\geq \frac{1}{2}$. By the identity $a=a_1+a_3+a_4$ and Lemma \ref{lem:PUest}, we only need to estimate $I(a_4)$.
We write $K:=\{\l\in (0,\infty)\mid |\l|\geq M^{-\frac{1}{2}},\,\, |\l-m|\geq M^{-\frac{1}{2}}\}$
The integration by parts yields
\begin{align*}
|I(a_4)|=&\left|i\int_{K}\left(\frac{\pa_{\l}^2\Phi(\l)a_4(\l)}{|\pa_{\l}\Phi(\l)|^2}+\frac{\pa_{\l}a_4(\l)}{|\pa_{\l}\Phi(\l)|}  \right)   e^{i\Phi(\l)}d\l\right|\\
\leq&\int_{K}\frac{C_2M}{C_1^2M^2|\l-m|^2}d\l\|a_4\|_{L^{\infty}}+  \int_{K}\frac{1}{\l\cdot C_1M|\l-m|}     d\l\|\l\pa_{\l}a_4\|_{L^{\infty}}.
\end{align*}
Now we observe that $\int_{K}|\l-m|^{-2}d\l\leq \int_{|\l-m|\geq M^{\frac{1}{2}}}|\l-m|^{-2}d\l\leq M^{-\frac{1}{2}}$ and 
\begin{align*}
\int_{K}\l^{-1}|\l-m|^{-1}d\l=&m^{-1}\int_{\l\geq m^{-1}M^{-\frac{1}{2}},|\l-1|\geq m^{-1}M^{-\frac{1}{2}}}\l^{-1}|\l-1|^{-1}d\l\\
\leq& C_3m^{-1}|\log(m^{-1}M^{-\frac{1}{2}})|=M^{\frac{1}{2}}\cdot C_3m^{-1}M^{-\frac{1}{2}}|\log(m^{-1}M^{-\frac{1}{2}})|\\
\leq&C_4M^{\frac{1}{2}},
\end{align*}
where $C_3,C_4>0$ are universal constants and we have used $\sup_{s\in (0,\sqrt{2})}|s\log (s^{-1})|<\infty$ and $0<m^{-1}M^{-\frac{1}{2}}\leq \sqrt{2}$. In this way, we conclude
\begin{align*}
|I(a_4)|\leq M^{-\frac{1}{2}}\left( \frac{C_2}{C_1^2}\|a_4\|_{L^{\infty}}+\frac{C_4}{C_1}\|\l\pa_{\l}a_4\|_{L^{\infty}}\right),
\end{align*}
which proves \eqref{eq:Qststate1} when $m^2M\geq \frac{1}{2}$.
\end{proof}

\begin{lemma}[Quantitative degenerate stationary phase]\label{lem:Qstph2}
Let $a$ be a $C^1$-function on $(0,\infty)$.
Suppose that $M,C_1,C_2>0$ and $m\in \R$ satisfy
\begin{align*}
|\pa_{\l}\Phi(\l)|\geq C_1M\l|\l^2-m|,\quad |\pa_{\l}^2\Phi(\l)|\leq C_2M(\l^2+|m|)
\end{align*}
for $\l\in \supp a$. Then, there exists $C>0$ \textbf{depending only on $C_1$, $C_2$ and $C_3$} such that
\begin{align}\label{eq:Qststate3}
|I(a)|\leq M^{-\frac{1}{2}}C( \|\l^{-1} a\|_{L^{\infty}}+ \|\pa_{\l}a\|_{L^{\infty}} )
\end{align}
when the right hand side is bounded.
\end{lemma}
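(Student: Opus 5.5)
We reduce to the $m$-scale by a case split: $|m|\le M^{-1/2}$ (the genuinely degenerate case) versus $|m|\ge M^{-1/2}$. Throughout we write $\|a\|_*:=\|\l^{-1}a\|_{L^\infty}+\|\pa_\l a\|_{L^\infty}$. By Lemma \ref{lem:PUest}, cutting $a$ with $\chi(L\l)$ or $\chi(L(\l-\l_0))$ for any scale $L$ costs only a constant in $\|\cdot\|_*$, provided cutoffs are centred at $0$. Near $\l_0\neq 0$, $\l^{-1}$ is comparable to $\l_0^{-1}$, so the same Leibniz argument applies. The basic integration by parts bound is
\begin{align*}
|I(b)|\le \int_{\supp b}\Big(\frac{|\pa_\l^2\Phi|\,|b|}{|\pa_\l\Phi|^2}+\frac{|\pa_\l b|}{|\pa_\l\Phi|}\Big)d\l ,
\end{align*}
valid when $b$ is supported away from zeros of $\pa_\l\Phi$. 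We use $|b|\le \l\|b\|_*$ and $|\pa_\l b|\le\|b\|_*$.

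\emph{Case $|m|\le M^{-1/2}$.} Split $a=a\chi(M^{1/4}\l)+a(1-\chi(M^{1/4}\l))$. The first piece is trivially bounded by $\int_0^{2M^{-1/4}}\l\,d\l\,\|a\|_*\lesssim M^{-1/2}\|a\|_*$; this is where the vanishing of $a$ at $0$ is used. On the second piece $\l\ge M^{-1/4}$, so $\l^2\ge M^{-1/2}\ge|m|$. Hence $|\l^2-m|\ge \l^2-|m|$ is comparable to $\l^2$ once we also cut off where $\l^2\le 2|m|$; that part has $\l\lesssim M^{-1/4}$ and is absorbed in the first piece after enlarging the cutoff constant. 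Thus $|\pa_\l\Phi|\gtrsim M\l^3$ and $|\pa^2_\l\Phi|\lesssim M\l^2$. The integrand is then bounded by
\[
\frac{M\l^2\cdot\l}{M^2\l^6}+\frac{1}{M\l^3}\lesssim M^{-1}\l^{-3},
\]
and integrating from $M^{-1/4}$ gives $M^{-1}M^{1/2}=M^{-1/2}$.

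\emph{Case $|m|\ge M^{-1/2}$, $m<0$.} Here $|\l^2-m|=\l^2+|m|$. Write $\mu:=|m|^{1/2}$ and cut at scale $\l\sim (M\mu)^{-1/2}$, noting $(M\mu)^{-1/2}\le M^{-1/4}\le \mu$. The small piece contributes at most $\int_0^{(M\mu)^{-1/2}}\l\,d\l\lesssim (M\mu)^{-1}\le M^{-1/2}$. On the rest, $|\pa_\l\Phi|\gtrsim M\l(\l^2+\mu^2)$ and $|\pa^2_\l\Phi|\lesssim M(\l^2+\mu^2)$. The integrand is then bounded by $\frac{1}{M\l(\l^2+\mu^2)}$, using $\l|b|\le \l^2\|b\|_*$ in the first term. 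Splitting at $\l=\mu$, the range $\l\le\mu$ gives
\[
\mu^{-2}M^{-1}\log\frac{\mu}{(M\mu)^{-1/2}}=M^{-1}\mu^{-2}\log\big(M^{1/2}\mu^{3/2}\big).
\]
With $s=M^{1/2}\mu^{2}\ge1$ this is $M^{-1/2}s^{-1}\log(s\mu^{-1/2})$. This is not obviously bounded, since $\mu$ may be small, so we redo it more carefully. The quantity is $M^{-1}\mu^{-2}\log(M^{1/2}\mu^{3/2})$ with $M^{1/2}\mu^{3/2}\le M^{1/2}\mu^2\cdot\mu^{-1/2}$. Instead, cut at $\l\sim M^{-1/2}\mu^{-1}$, which is $\le\mu$ since $M^{1/2}\mu^2\ge1$. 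The small piece gives $M^{-1}\mu^{-2}\le M^{-1/2}$. The log term becomes $M^{-1}\mu^{-2}\log(M^{1/2}\mu^2)\lesssim M^{-1/2}$ by $\sup_{s\ge1}s^{-1}\log s<\infty$. The range $\l\ge\mu$ gives $\int_\mu^\infty\frac{d\l}{M\l^3}\lesssim M^{-1}\mu^{-2}\le M^{-1/2}$.

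\emph{Case $m\ge M^{-1/2}$.} This is the main obstacle: there is a genuine nondegenerate stationary point near $\l=\mu=\sqrt m$. Near it, $|\pa_\l\Phi|\gtrsim M\mu^2|\l-\mu|$ and $|\pa_\l^2\Phi|\lesssim M\mu^2$, so the effective Hessian is $M\mu^2$. A naive application of Lemma \ref{Lem:Qstph1}(ii) with $M\mu^2$ in place of $M$ yields $(M\mu^2)^{-1/2}\sup|a|$ near $\mu$. Since $\sup|a|\lesssim\mu\|a\|_*$ there, this gives $M^{-1/2}\|a\|_*$. We make this precise by splitting into three pieces.
\begin{itemize}
\item $\l\le\mu/2$, cut additionally at $M^{-1/2}\mu^{-1}$: here $|\l^2-m|\sim\mu^2$, and the computation is identical to the $m<0$ case.
\item $\l\ge 2\mu$: here $|\l^2-m|\sim\l^2$, and this is the tail computation above.
\item $\mu/2\le\l\le2\mu$: localize with $\chi(\l/\mu)$-type cutoffs. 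The piece within $(M\mu^2)^{-1/2}$ of $\mu$ is bounded by $(M\mu^2)^{-1/2}\cdot\mu\|a\|_*=M^{-1/2}\|a\|_*$. Away from $\mu$, integration by parts gives
\[
\int\Big(\frac{\mu\cdot M\mu^2}{M^2\mu^4|\l-\mu|^2}+\frac{1}{M\mu^2|\l-\mu|}\Big)d\l \lesssim M^{-1/2}+M^{-1}\mu^{-2}\log\big(M^{1/2}\mu^2\big)\lesssim M^{-1/2}.
\]
\end{itemize}
The delicate points are the $\|\cdot\|_*$-bounds for these cutoffs, which follow as in Lemma \ref{lem:PUest} because $\l\sim\mu$ on their supports, and the logarithm, which is controlled using $M^{1/2}\mu^2\ge1$.
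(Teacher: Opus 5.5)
Your argument is essentially the paper's proof, carried out without rescaling.

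\textbf{Comparison with the paper.} The paper splits at $Mm^2\lessgtr\frac14$ and treats the small-$|m|$ case exactly as you do: cut at $M^{-1/4}$, use $|\partial_\lambda\Phi|\gtrsim M\lambda^3$, and integrate by parts. For $Mm^2\ge\frac14$ it substitutes $\lambda\mapsto|m|^{1/2}\lambda$ with $\tilde M=Mm^2$. Its regions $\lambda\le\frac12$, $\frac14\le\lambda\le4$, $\lambda\ge2$ correspond to your $\lambda\ll\mu$, $\lambda\sim\mu$, $\lambda\gg\mu$. Its extra cut at $\tilde M^{-1/2}$ is precisely your cut at $M^{-1/2}\mu^{-1}$. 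The paper writes out only the near-zero piece and calls the other two ``similar''. Your abandoned first attempt with the cut at $(M\mu)^{-1/2}$ should simply be deleted.

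\textbf{A step that is wrong as stated.} A cutoff $\chi(L(\lambda-\lambda_0))$ with $\lambda_0\neq0$ does not preserve $\|\cdot\|_*$ up to a constant for arbitrary $L$. Leibniz produces the extra term $L\sup_{|\lambda-\lambda_0|\le 2/L}|a|\lesssim L\lambda_0\|\lambda^{-1}a\|_{L^\infty}$, which is harmless only when $L\lambda_0\lesssim1$. Your fine cutoff around $\mu$ has $L=(M\mu^2)^{1/2}$, so $L\mu=M^{1/2}\mu^2$ is unbounded. Hence the bound $|\partial_\lambda b|\le\|b\|_*$ used in your last display is not justified.

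\textbf{How to repair it.} Either of the following works.
\begin{itemize}
\item Treat the cutoff-derivative term by hand. It lives on a set of length $\sim(M\mu^2)^{-1/2}$, where $|\partial_\lambda\Phi|\gtrsim(M\mu^2)^{1/2}$ and $|a\,\partial_\lambda\chi|\lesssim(M\mu^2)^{1/2}\mu\|a\|_*$. So it contributes $\lesssim(M\mu^2)^{-1/2}\mu\|a\|_*=M^{-1/2}\|a\|_*$.
\item Keep your ``naive'' step, which is in fact rigorous. Localize to $[\mu/4,4\mu]$ using only scale-$\mu$ cutoffs centred at $0$; these are covered by Lemma \ref{lem:PUest}. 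On that interval the hypotheses \eqref{eq:stphaseas2} hold with $(M\mu^2,\mu)$ in place of $(M,m)$. Moreover $\|b\|_{L^\infty}+\|\lambda\partial_\lambda b\|_{L^\infty}\le4\mu\|b\|_*$ there. Lemma \ref{Lem:Qstph1} then gives $M^{-1/2}\|a\|_*$. This is what the paper's ``similarly'' amounts to for $\tilde a_2$.
\end{itemize}
The written proof of Lemma \ref{Lem:Qstph1} under \eqref{eq:stphaseas2} also invokes Lemma \ref{lem:PUest} for the off-centre cutoff $\chi(M^{1/2}(\lambda-m))$. It needs the same direct treatment, though the statement of that lemma is correct.
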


\begin{remark}
This lemma is a quantitative version of the degenerate stationary phase theorem with a vanishing symbol, which asserts that if $\Phi^{(j)}(0)= 0$ ($j=1,2,3$) and $\Phi^{(4)}(0)\sim M$ holds, and $a$ is a smooth function supported around $0$ with $a(\l)=O(\l)$ ($\l\to 0$), then $I(a)=O(M^{-\frac{1+1}{4}})=O(M^{-\frac{1}{2}})$. Without the assumption  $a(\l)=O(\l)$ as $\l\to 0$, the optimal decay would be $I(a)=O(M^{-\frac{1}{4}})$.
\end{remark}

\begin{proof}
Let $\chi\in C_c^{\infty}(\R;[0,1])$ be such that $\chi(\l)=1$ for $|\l|\leq 1$ and $\chi(\l)=0$ for $|\l|\geq 2$. Set
\begin{align*}
a_1(\l):=&a(\l)\chi(M^{\frac{1}{4}}\l),\quad a_2(\l):=a(\l)(1-\chi(M^{\frac{1}{4}}\l)),\\
a_3(\l):=&a_2(\l)\chi(M^{\frac{1}{4}}(\l-|m|^{\frac{1}{2}})),\quad a_4(\l):=a_2(\l)(1-\chi(M^{\frac{1}{4}}(\l-|m|^{\frac{1}{2}}))).
\end{align*}
By the support properties of $a_1$ and $a_3$, we have $|I(a_1)|\leq \|\l\|_{L^1(0,2M^{-\frac{1}{4}})} \|\l^{-1}a\|_{L^{\infty}}=2M^{-\frac{1}{2}}\|\l^{-1}a\|_{L^{\infty}}$ and  $|I(a_1)|\leq 2M^{-\frac{1}{2}}\|\l^{-1}a\|_{L^{\infty}}$.

First, we consider the case where $m^2M\leq \frac{1}{4}$. Due to the decomposition $a=a_1+a_2$ and Lemma \ref{lem:PUest}, we only need to estimate $I(a_2)$. Then
\begin{align*}
|\pa_{\l}\Phi(\l)|\geq C_1M\l|\l^2-m|\geq \frac{1}{2}C_1M\l^3,\quad |\pa_{\l}^2\Phi(\l)|\leq \frac{3}{2}C_2M\l^2
\end{align*}
for $\l\in \supp a_2$.
The integration by parts yields
\begin{align*}
|I(a_2)|=&\left|i\int_{M^{-\frac{1}{4}}}^{\infty}\left(\frac{\pa_{\l}^2\Phi(\l)a_2(\l)}{|\pa_{\l}\Phi(\l)|^2}+\frac{\pa_{\l}a_2(\l)}{|\pa_{\l}\Phi(\l)|}  \right)   e^{i\Phi(\l)}d\l\right|\\
\leq&\int_{M^{-\frac{1}{4}}}^{\infty}\frac{6C_2M\l^2}{C_1^2M^2\l^6}\cdot \l d\l\|\l^{-1}a_2\|_{L^{\infty}}+  \int_{M^{-\frac{1}{4}}}^{\infty}\frac{2}{C_1M\l^3}     d\l\|\pa_{\l}a_2\|_{L^{\infty}}\\
=&M^{-\frac{1}{2}}\left(\frac{3C_2}{C_1^2}\|\l^{-1}a_2\|_{L^{\infty}}+\frac{1}{C_1}\|\pa_{\l}a_2\|_{L^{\infty}} \right).
\end{align*}

Next we consider $m^2M\geq \frac{1}{4}$. By the change of variable, we have
\begin{align*}
I(a)=|m|^{\frac{1}{2}}\int_0^{\infty}\tilde{a}(\l)e^{i\tilde{\Phi}(\l)}d\l=:|m|^{\frac{1}{2}}\tilde{I}(\tilde{a}),\quad \tilde{a}(\l):=a(|m|^{\frac{1}{2}}\l),\quad \tilde{\Phi}(\l)=\Phi(|m|^{\frac{1}{2}}\l).
\end{align*}
Setting $\tilde{M}=Mm^2\geq 1/4$, we have 
\begin{align*}
|\pa_{\l}\tilde{\Phi}(\l)|\geq C_1\tilde{M}\l|\l^2-(\sgn m)|,\quad |\pa_{\l}^2\tilde{\Phi}(\l)|\leq C_2\tilde{M}(\l^2+1).
\end{align*}
In particular, we have $|\pa_{\l}\tilde{\Phi}(\l)|\geq \frac{C_1}{2}\tilde{M}\l$ for $0\leq \l\leq 1/2$, $|\pa_{\l}\tilde{\Phi}(\l)|\geq \frac{C_1}{4}\tilde{M}|\l-(\sgn m)|$ for $1/4\leq \l\leq 4$, and $|\pa_{\l}\tilde{\Phi}(\l)|\geq C_1\tilde{M}\l^3$ for $\l\geq 2$.

Define
\begin{align*}
\tilde{a}_1(\l):=\chi(4\l)\tilde{a}(\l),\quad \tilde{a}_2(\l):=(1-\chi(4\l))\chi(\l/2)\tilde{a}(\l),\quad \tilde{a}_3(\l):=\tilde{a}(\l)-\tilde{a}_1(\l)-\tilde{a}_2(\l).
\end{align*}
To estimate an integral regarding $\tilde{a}_1$, we write 
\begin{align*}
\tilde{a}_1(\l)=\chi(\tilde{M}^{\frac{1}{2}}\l)\tilde{a}_1+(1-\chi(\tilde{M}^{\frac{1}{2}}\l))\tilde{a}_1=:\tilde{a}_{11}(\l)+\tilde{a}_{12}(\l).
\end{align*}
Then it turns out from $\|\l^{-1}\tilde{a}_{11}\|_{L^{\infty}}\leq |m|^{\frac{1}{2}}\|\l^{-1}a\|_{L^{\infty}}$ that
\begin{align*}
|m|^{\frac{1}{2}}|\tilde{I}(\tilde{a}_{11})|\leq |m|^{\frac{1}{2}}\cdot \underbrace{4\tilde{M}^{-\frac{1}{2}}}_{\mathclap{\text{the volume bound of $\supp \tilde{a}_{11}$}}}\cdot \|\l^{-1}\tilde{a}_{11}\|_{L^{\infty}}\leq 4|m|\tilde{M}^{-\frac{1}{2}}\|\l^{-1}a\|_{L^{\infty}}
=4M^{-\frac{1}{2}}\|\l^{-1}a\|_{L^{\infty}}.
\end{align*}
On the other hand, the integration by parts yields\footnote{The following estimates are not optimal for $\tilde{M}\gg 1$ (but optimal for $\tilde{M}\sim 1$). In fact, we have the bound \begin{align*}\lesssim |m|\tilde{M}^{-1}|\log(\tilde{M})|(\|\l^{-1}a\|_{L^{\infty}}+\|\pa_{\l}a\|_{L^{\infty}}).\end{align*}}
\begin{align*}
|m|^{\frac{1}{2}}|\tilde{I}(\tilde{a}_{12})|\leq& |m|^{\frac{1}{2}}\int_0^{\infty}\left(\frac{|\pa_{\l}^2\tilde{\Phi}(\l)||\tilde{a}_{11}(\l)|}{|\pa_{\l}\tilde{\Phi}(\l)|^2}+\frac{|\pa_{\l}\tilde{a}_{11}(\l)|}{|\pa_{\l}\tilde{\Phi}(\l)|}\right)  d\l\\
\leq&|m|^{\frac{1}{2}}\int_{2\tilde{M}^{-\frac{1}{2}}\leq \l\leq 1/2}\left(\frac{4C_2\tilde{M}(\l^2+1)}{C_1^2\tilde{M}^2\l}\cdot (\l^{-1}\tilde{a}_{11}(\l))+\frac{2|\pa_{\l}\tilde{a}_{11}(\l)|}{C_1\tilde{M}\l}\right)  d\l\\
\leq&|m|^{\frac{1}{2}}\tilde{M}^{-\frac{1}{2}}C_3\left(\frac{C_2}{C_1^2}+\frac{1}{C_1}\right)(\|\l^{-1}\tilde{a}_{11}\|_{L^{\infty}}+\|\pa_{\l}\tilde{a}_{11}\|_{L^{\infty}})\\
\leq&|m|\tilde{M}^{-\frac{1}{2}}C_3\left(\frac{C_2}{C_1^2}+\frac{1}{C_1}\right)(\|\l^{-1}a\|_{L^{\infty}}+\|\pa_{\l}a\|_{L^{\infty}})\\
\leq&4M^{-\frac{1}{2}}C_3\left(\frac{C_2}{C_1^2}+\frac{1}{C_1}\right)(\|\l^{-1}a\|_{L^{\infty}}+\|\pa_{\l}a\|_{L^{\infty}}),
\end{align*}
where $C_3>0$ is a universal constant and we use Lemma \ref{lem:PUest}. To sum up, we obtain
\begin{align*}
|m|^{\frac{1}{2}}\left|\int_0^{\infty}\tilde{a}_1(\l)e^{i\tilde{\Phi}(\l)}d\l\right|\leq C_4M^{-\frac{1}{2}}(\|\l^{-1}a\|_{L^{\infty}}+\|\pa_{\l}a\|_{L^{\infty}}),
\end{align*}
where $C_4>0$ depends only on $C_1$ and $C_2$. Estimates for $\tilde{a}_2$ and $\tilde{a}_3$ can be deduced similarly. Consequently, we obtain \eqref{eq:Qststate3}.
\end{proof}

\section{Asymptotic expansions of the spectral projection}\label{2508241544}
In this section we show that the spectral projection (more precisely the spectral density, see below) of the Schr\"odinger operator with slowly decaying attractive potentials can be written as a WKB state. The key step is the construction of the Jost functions which have different asymptotic behavior compared to the Schr\"odinger operator with fast decaying potentials. In addition to the construction, we also need symbolic decay estimates on the coefficients of the Jost functions with respect to the energy parameter. We split the proof into several steps as follws. In Subsection \ref{2510231112}, we construct solutions of a vector-valued ODE to which the Schr\"odinger equation with inverse square type potentials are transformed. Then in Subsection \ref{2510231245}, we construct Jost functions for the Schr\"odinger equation with inverse square type potentials. We also investigate their asymptotic properties in detail. In Subsection \ref{2510231114} we construct our Jost functions for the Schr\"odinger equation with slowly decaying potentials using results in the previous two subsections. Finally, in Subsection \ref{2510231110}, we give an expression of the outgoing/incoming resolvent and the proof of our main Theorem \ref{2509041030}.



For $\lambda > 0$, we define the spectral density of $P=-\pa_x^2+V(x)$ by
\begin{align}\label{def:spdensity}
\tilde{E} (\lambda) := \frac{R(\lambda^2 +i0)-R(\lambda^2 -i0)}{2\pi i} \cdot 2\lambda = E' _P (\lambda^2) \cdot 2\lambda
\end{align}
so that
\begin{align*}
e^{-itP} E_{\mathrm{ac}}(P)=e^{-itP} E_P (\R_{ \ge 0}) = \int_{0}^{\infty} e^{-it\mu} E' _{P} (\mu) d\mu = \int_{0}^{\infty} e^{-it\lambda^2} \tilde{E} (\lambda) d\lambda,
\end{align*}
where $E_P (\cdot)$ is the spectral projection associated with $P$. As is mentioned in the introduction, Theorem \ref{2509021637} below ensures that $P$ has no embedded eigenvalues and hence $\tilde{E}'(\l)$ is well-defined.
The following theorem is the main result in this section.

\begin{thm}\label{2509041030}
Suppose that $V$ satisfies Assumption \ref{assump:V}. Then, for $\lambda >0$, we have 
\begin{align}
\tilde{E} (\lambda, x, x') = \sum_{\sigma_{1}, \sigma_{2} \in \{\pm\}} b_{\sigma_{1}, \sigma_{2}} (\lambda, x, x') e^{iS_{\sigma_{1}, \sigma_{2}} (\lambda, x, x')}, \label{2509041043}
\end{align}
where $S_{\sigma_{1}, \sigma_{2}}$ and $b_{\sigma_{1}, \sigma_{2}}$ satisfy
\begin{align}
&S_{\sigma_{1}, \sigma_{2}} (\lambda, x, x') = \sigma_{1} \int_{0}^{x} \sqrt{\lambda^2 -V(s)} ds + \sigma_{2} \int_{0}^{x'} \sqrt{\lambda^2 -V(s)} ds, \label{2510201028}\\
&|\partial^{\alpha} _{\lambda} b_{\sigma_{1}, \sigma_{2}} (\lambda, x, x')| \lesssim \min (\lambda^{1-\alpha} \jap{x}^{\frac{\mu}{4}} \jap{x'}^{\frac{\mu}{4}}, \lambda^{-\alpha}) \label{2510201029}
\end{align}
for all $x, x' \in \R$, $\lambda >0$ and $\alpha = 0, 1, 2$. Moreover, 
\begin{align}\label{eq:ampaddecay}
|b_{+,+}(\l,x,x')|+|b_{-,-}(\l,x,x')|\lesssim |\l|^{-2}\max(|x|^{-2},|x'|^{-2})
\end{align}
for $x> 0$, $x'< 0$ and $\l>0$.

\end{thm}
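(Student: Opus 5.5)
The plan is to express $\tilde{E}(\l,x,x')$ through Jost solutions and then read off the phases and amplitudes from their WKB form. Let $f_\pm(\l,x)$ be the solutions of $-u''+Vu=\l^2u$ that are outgoing at $\pm\infty$. With $p(\l,s):=\sqrt{\l^2-V(s)}$, they should have the form
\begin{align*}
f_\pm(\l,x)=p(\l,x)^{-\frac12}e^{\pm i\int_0^x p(\l,s)ds}\bigl(1+r_\pm(\l,x)\bigr)\quad(\pm x\geq 0).
\end{align*}
Since $p\gtrsim \l+\jap{x}^{-\m/2}$, the prefactor satisfies $p^{-1/2}\lesssim \min(\l^{-1/2},\jap{x}^{\m/4})$, and this is the source of the $\jap{x}^{\m/4}$ factors in \eqref{2510201029}.

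To construct $f_\pm$, I would pass to the Liouville/Langer variables. The Schr\"odinger equation becomes a first order $2\times2$ system for $(u,u')$ in the WKB frame, of the form $w'=(iA+B)w$ with $A$ diagonal with entries $\pm p$. The error matrix $B$ is of size $|p'|/p+|p''|/p^2\lesssim \jap{x}^{-1}$, with a gain $\jap{x}^{-1+\m/2}\,(\l+\jap{x}^{-\m/2})^{-1}\cdot\jap{x}^{-\m/2}$ after one more integration by parts (second-order WKB correction). This error is integrable at infinity uniformly in $\l\geq0$ precisely because $\m<2$: the integrand is $|V'|^2/p^3+|V''|/p^2\lesssim \jap{x}^{-2+\m/2}$.

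A Volterra iteration then gives $r_\pm$ bounded uniformly in $\l\geq 0$, together with symbol bounds $|\pa_\l^\a r_\pm|\lesssim \min(1,\l^{-\a})$ for $\a\le2$. These come from differentiating $p$ in $\l$, using $|\pa_\l^\a p^{\pm1}|\lesssim p^{\pm1}p^{-\a}$. This is where only $\a\leq3$ derivatives of $V$ are needed. On the opposite half-line, I continue $f_+$ through the compact region $[-R,0]$ (regular ODE, smooth in $\l$ down to $0$) and express it at $x\to-\infty$ as $\a(\l)f_-+\b(\l)\overline{f_-}$, i.e. as a combination of the two WKB modes $e^{\pm i\int_0^x p}$.

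Next I write the resolvent kernel as $R(\l^2+i0)(x,x')=-f_+(x_>)f_-(x_<)/W(\l)$, with $W=W(f_+,f_-)$, and $\tilde{E}=\frac{\l}{\pi i}(R^+-R^-)$. Using $R^-=\overline{R^+}$ and the constancy of Wronskians, I get $\tilde{E}(\l,x,x')=\frac{\l}{\pi}\,|W|^{-2}\cdot 2i\l\,\mathrm{Im}(\dots)$. In the standard way this becomes
\begin{align*}
\tilde{E}(\l,x,x')=\frac{\l}{\pi}\bigl(|T|^2/\l^2\ \text{-type}\bigr)\sum \psi_j(x)\overline{\psi_j(x')},
\end{align*}
a sum over the two generalized eigenfunctions $\psi_\pm=f_\pm/W$ times $W(f_\pm,\overline{f_\pm})$. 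Substituting the WKB forms and expanding on each half-line produces exactly the four phases $S_{\s_1,\s_2}$, with amplitudes made of $p(x)^{-1/2}p(x')^{-1/2}$, products of $(1+r)$, the connection coefficients, and the factor $\l\,W(f,\overline f)/|W|^2$.

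The key point is the low-energy behaviour of $W$. Because $f_\pm$ are normalized by $p^{-1/2}$ rather than $\l^{-1/2}$, we have $W(f_\pm,\overline{f_\pm})=\mp2i$ exactly, and $|W(\l)|\gtrsim 1$ uniformly as $\l\to0$. The latter follows from nonvanishing at $\l=0$, which in turn holds because the zero-energy solutions are non-$L^2$ and oscillatory (Theorem \ref{2509021637}), so $f_+(0),f_-(0)$ are independent. Hence the amplitude is $O(\l)\,p(x)^{-1/2}p(x')^{-1/2}$, giving the bound $\l\jap{x}^{\m/4}\jap{x'}^{\m/4}$ for small $\l$. For large $\l$, $p^{-1/2}\sim\l^{-1/2}$ and the bound is $\lambda\cdot\l^{-1}=O(1)$. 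Derivatives cost $\l^{-1}$ each by the symbol bounds, giving \eqref{2510201029}.

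For \eqref{eq:ampaddecay}, take $x>0>x'$. Then $\tilde{E}$ only involves $f_+(x)$ and $f_-(x')$ directly (the reflected parts cancel), so the $(+,+)$ and $(-,-)$ terms come solely from the "wrong-direction" components $r_\pm$. I would prove the refined estimate $|r_\pm(\l,x)|\lesssim \l^{-1}\cdot p\,\jap{x}^{-2}\cdots$, more precisely that the backward-mode coefficient is bounded by $\int_x^\infty |B|\,ds$ after an integration by parts against $e^{2i\int p}$, gaining $(\l\jap{x})^{-2}$ type factors. This is the step I expect to be the main obstacle. Tracking the non-oscillating versus oscillating components of the Volterra solution carefully enough to extract $\l^{-2}|x|^{-2}$, uniformly down to $\l\to 0$, requires the two integrations by parts and using $\int_x^\infty|V'|^2p^{-3}+|V'''|p^{-2}\lesssim\jap{x}^{-2}\l^{-2}\cdot\l$-type bounds. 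I would first try to verify this on the model $V=-\jap{x}^{-\m}$ before running the general argument.
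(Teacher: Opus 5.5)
Your overall architecture ($p^{-1/2}$-normalized Jost solutions from a second-order Liouville/WKB reduction and a Volterra iteration, the resolvent kernel through $f_\pm$, then reading off the four phases) is essentially the paper's. However, the step you call the key point is justified incorrectly. Here every nonzero zero-energy solution is non-$L^2$ (already $|p^{-1/2}|^2\sim\jap{x}^{\m/2}$), so non-$L^2$-ness cannot tell you that $f_+(\cdot,0)$ is not a multiple of $f_-(\cdot,0)$. Even granting $W(0)\neq 0$, continuity only gives a lower bound on some interval $[0,\l_1]$, not uniformly in $\l>0$. The missing idea is flux conservation, and you already wrote down its ingredient. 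With $f_+=\a f_-+\b\overline{f_-}$, the identities $W(f_\pm,\overline{f_\pm})=\mp 2i$ give $|\b|^2-|\a|^2=1$, while $W(f_+,f_-)=-2i\b$. Hence $|W(\l)|\geq 2$ for every real $\l$, including $\l=0$. This is how the paper proves Theorem \ref{2509021637}$(iv)$, and it is exactly what produces the vanishing factor $\l$ in \eqref{2510201029}.

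Second, \eqref{eq:ampaddecay} is left as an unproved ``main obstacle''. It becomes routine in the Liouville variable $y=\int_0^x p$. With $v=p^{1/2}u$ the equation reads $-\pa_y^2 v+W(y,\l)v=v$, where $W=U(x(y,\l),\l)$ and $U=\frac{V''}{4p^4}-\frac{5(V')^2}{16p^6}$. One checks $|W|\lesssim\jap{y}^{-2}$ and $|\pa_y W|\lesssim\jap{y}^{-3}$ uniformly in $\l$. The integrand that actually governs the error is $|V''|p^{-3}+|V'|^2p^{-5}\lesssim\jap{x}^{-2+\m/2}$; this is where $\m<2$ enters, and $\pa_yW$ is where $V'''$ enters. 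In the rotating frame the backward-mode coefficient of $f_+$ equals $\frac{i}{2}\int_\infty^y W(s,\l)e^{2is}ds+O(\jap{y}^{-2})$, with the error coming from $\mathbf{w}_+-(1,i)^t=O(\jap{y}^{-1})$. A single integration by parts makes it $O(\jap{y}^{-2})$ uniformly in $\l$, and since $y(x,\l)\geq \l x$ for $x>0$, this is $\lesssim \l^{-2}x^{-2}$. For $x>0>x'$, each of the two terms contributing to $b_{+,+}$ (and to $b_{-,-}$) contains exactly one such backward factor. The other factors and $W^{-1}$ are bounded, and $\l p(x)^{-1/2}p(x')^{-1/2}\leq 1$, which gives \eqref{eq:ampaddecay}. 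Relatedly, your multiplicative ansatz $1+r_\pm$ has to be split into forward and backward amplitudes to produce the four phases. Also, the bound $|\pa_\l^\a r_\pm|\lesssim\min(1,\l^{-\a})$ is more than the Volterra argument gives; $\lesssim \l^{-\a}$ is what is provable, and it is all that \eqref{2510201029} needs.
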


\begin{remark}\label{rem:spprojWKBxx'<0}
We have another expression of $\tilde{E}(\l,x,x')$ with amplitudes $\tilde{b}_{\s_1,\s_2}$ that satisfies \eqref{2510201029} for $x,x'\in \R$ and $\l>0$ and \eqref{eq:ampaddecay} for $x< 0$, $x'>0$ and $\l>0$. 
The proof is a slight modification of the following argument. In fact, we have only to insert the WKB expression of $u_{\pm}$ given in Theorem \ref{2509021637} to \eqref{eq:spprojJost2} instead of \eqref{eq:spprojJost}.
\end{remark}

\begin{remark}
An analogous WKB expression of the microlocalized spectral density for $P=-\Delta_g$ on non-trapping scattering manifolds was derived in \cite[Proposition 1.4]{HZ}. In contrast to this (in one-dimension), our amplitude functions $b_{\s_1,\s_2}$ vanish at $\l=0$ due to the strong negativity of the potential $V$.
\end{remark}

\begin{remark}
Additional decay estimates \eqref{eq:ampaddecay} of the amplitudes $b_{+,+}$ and $b_{-,-}$ are not needed to prove Theorem  \ref{thm:main} if $V$ is an even function. 
\end{remark}

\begin{remark}\label{rem:spdensityWKBext}
An analogous statement remains valid for $\l\gg 1$ even if $V$ may be positive on a bounded interval (e.g. a real-valued function $V$ and $\m>0$ satisfy $|\pa_x^{\a}V(x)|\lesssim \jap{x}^{-\a-\mu}$ for all $\a\in\mathbb{N}_0$). More precisely, we fix $C_0 \ge1$ such that $|V(x)| \le \frac{C_0}{2}$ for all $x \in \R$. Then $y(x,\l)$ is well-defined for $\l\geq C_0$ and $x\in \R$. Moreover, $\tilde{E}(\l,x,x')$ is written as \eqref{2509041043} for $\l\geq C_0$, $x,x'\in \R$ and the corresponding $b_{\s_1,\s_2}$ enjoys the properties \eqref{2510201029} and \eqref{eq:ampaddecay}. The proof is a simple modification of the following argument with the bound
\begin{align*}
\frac{1}{|\lambda^2 -V(x)|^{\alpha}} \lesssim \min \left(\frac{1}{|\lambda|^{2\alpha}}, \langle x \rangle^{\mu \alpha}\right)
\end{align*}
for $\alpha >0$, $x \in \R$ and $|\lambda| \ge C_0$.

\end{remark}

Some of the results below hold under weaker assumptions of $V$, but we do not pursue such generalizations here. The following argument is a refinement of the theory of the asymptotic behavior of solutions to ODEs \cite{Ha}.

\subsection{ODEs with non-oscillating solutions}\label{2510231112}
In this subsection we construct non-oscillating solutions of ODEs to which a Schr\"odinger equation with a short range potential is transformed in the next subsection.

\begin{thm}\label{2508311359}
Let $B \in C^{\infty} (\R_{y} \times (\R_{\lambda} \setminus \{0\}); M_2 (\C)) \cap C (\R_{y} \times \R_{\lambda}; M_2 (\C))$ be a matrix-valued function such that
\begin{align}\label{eq:Bassump}
\|\pa_{\l}^{\a}B(y,\l)\|_{\mathbb{C}^2\to\mathbb{C}^2}\lesssim |\l|^{-\a}\jap{y}^{-2},\quad \|\pa_yB(y,\l)\|_{\mathbb{C}^2\to\mathbb{C}^2}\lesssim \jap{y}^{-2}
\end{align}
for $\a\in\mathbb{N}_0$.
Then there exist solutions $\textbf{w}_{\pm}\in C(\R_y\times \R_{\l};\mathbb{C}^2)\cap C^{\infty}(\R_y\times \R_{\l}\setminus \{0\};\mathbb{C}^2)$ of
\begin{align}\label{2510132029}
\partial_{y} \textbf{w}_{\pm} (y, \lambda) = B(y, \lambda)\textbf{w}_{\pm} (y, \lambda)
\end{align}
which also smoothly depend on $\lambda \in \R \setminus \{0 \}$ and continuously depend on $\lambda \in \R$. Moreover $\textbf{w}_{\pm} (y, \lambda) = (w_{1, \pm} (y, \lambda), w_{2, \pm} (y, \lambda))$ satisfy the following properties:

\vspace{1mm}
\noindent$(i)$ For each $\sigma \in \{\pm\}$, we have $\sup\limits_{y \in \R, \lambda \in \R } \| \textbf{w}_{\sigma} (y, \lambda)\|_{\C^2} < \infty$ and the limits
\begin{align*}
\textbf{w}_{\sigma} (\pm \infty, \lambda) := \lim_{y \to \pm \infty} \textbf{w}_{\sigma} (y, \lambda)
\end{align*}
exist and $\textbf{w}_{\pm} (\pm \infty, \lambda)=\begin{pmatrix}1\\ \pm i\end{pmatrix}$.

\vspace{1mm}
\noindent$(ii)$ We have the uniform convergence:
\begin{align*}
&\lim_{y\to \pm\infty}\sup_{\lambda \in \R} \|\textbf{w}_{\sigma} (\pm \infty, \lambda) - \textbf{w}_{\sigma} (y, \lambda)\|_{\C^2} =0, \quad\lim_{y\to \pm\infty}\sup_{\lambda \in \R} \| \partial_{y} \textbf{w}_{\sigma} (y, \lambda)\|_{\C^2} = 0.
\end{align*}

\vspace{1mm}
\noindent$(iii)$ For  $\a,\b,\c\in\mathbb{N}_0$ satisfying $\b+\c\leq 2$,
\begin{align*}
\| \partial^{\alpha} _{\lambda} \textbf{w}_{\pm} (y, \lambda)\|_{\C^2} \lesssim | \lambda |^{- \alpha},\quad \|\partial^{\alpha} _{\lambda} \textit{\textbf{w}}_{\pm} (\mp\infty, \lambda)\|_{\mathbb{C}^2}\lesssim |\l|^{-\a},\quad \| \pa_y^{\b}\partial^{\c}_{\l} \textbf{w}_{\pm} (y, \lambda)\|_{\mathbb{C}^2} \lesssim | \lambda |^{-\c}\jap{y}^{-\b} 
\end{align*}
uniformly in $y \in \R$ and $\lambda \in \R \setminus \{0\}$.

\vspace{1mm}
\noindent$(iv)$ We further assume that
\begin{align*}
B(y,\l)=-W(y, \lambda)\begin{pmatrix}
\sin y \cos y & \sin^2 y \\
-\cos^2 y & -\sin y \cos y
\end{pmatrix}
\end{align*}
for some $W\in C^{\infty} (\R_{y} \times (\R_{\lambda} \setminus \{0\})) \cap C (\R_{y} \times \R_{\lambda})$ satisfying $|W(y,\l)|\lesssim \jap{y}^{-2}$ and $|\pa_yW(y,\l)|\lesssim \jap{y}^{-3}$ uniformly in $y\in \R$ and $\l\in \R$. Then we have
\begin{align*}
|w_{1,+}(y,\l)+iw_{2,+}(y,\l)|\lesssim \jap{y}^{-2},\quad |w_{1,-}(y',\l)-iw_{2,-}(y',\l)|\lesssim \jap{y'}^{-2}
\end{align*}
uniformly in $y\geq 0$, $y'\leq 0$ and $\l\in \R$.

\end{thm}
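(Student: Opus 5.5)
We construct $\textbf{w}_{\pm}$ as solutions of the Volterra integral equations
\begin{align*}
\textbf{w}_{\pm}(y,\l)=\begin{pmatrix}1\\ \pm i\end{pmatrix}-\int_y^{\pm\infty}B(s,\l)\textbf{w}_{\pm}(s,\l)\,ds,
\end{align*}
i.e.\ we prescribe the data at $y=\pm\infty$. Since $\|B(s,\l)\|\lesssim\jap{s}^{-2}$ uniformly in $\l$ (the case $\a=0$ of \eqref{eq:Bassump}), the Picard iteration converges: the $n$-th term is bounded by $(\int_y^{\pm\infty}\|B\|ds)^n/n!$, so $\sup_{y,\l}\|\textbf{w}_{\pm}\|\le \exp(\int_\R\|B\|)<\infty$. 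Continuity in $(y,\l)$ on $\R\times\R$ and smoothness on $\R\times(\R\setminus\{0\})$ follow by uniform convergence of the series and its derivatives. Differentiating the equation gives \eqref{2510132029} and $\|\pa_y\textbf{w}_\pm\|\lesssim\jap{y}^{-2}$. The existence of $\textbf{w}_\sigma(\mp\infty,\l)$ follows from integrability of $B\textbf{w}_\sigma$ on all of $\R$, and the remainder $\int_{|s|\ge|y|}\jap{s}^{-2}ds\lesssim\jap{y}^{-1}$ gives (i) and the uniform convergence in (ii).

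For (iii) we differentiate the integral equation in $\l$. Writing $\textbf{v}_\c=\pa_\l^\c\textbf{w}_\pm$, we get
\begin{align*}
\textbf{v}_\c=-\int_y^{\pm\infty}B\textbf{v}_\c\,ds-\sum_{k<\c}\binom{\c}{k}\int_y^{\pm\infty}(\pa_\l^{\c-k}B)\textbf{v}_k\,ds.
\end{align*}
By induction and \eqref{eq:Bassump}, the inhomogeneous part is $O(|\l|^{-\c})$ uniformly in $y$. Gronwall's inequality for Volterra equations then yields $\|\textbf{v}_\c\|\lesssim|\l|^{-\c}$. The bound for $\pa_\l^\a\textbf{w}_\pm(\mp\infty,\l)$ comes from letting $y\to\mp\infty$. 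For the mixed bounds with $\b\ge1$ we use $\pa_y\pa_\l^\c\textbf{w}=\sum\binom{\c}{k}(\pa_\l^{\c-k}B)\pa_\l^k\textbf{w}=O(|\l|^{-\c}\jap{y}^{-2})$. For $\b=2$ we differentiate once more in $y$, using $\|\pa_yB\|\lesssim\jap{y}^{-2}$ and $\pa_y\textbf{w}=B\textbf{w}$, which gives $O(\jap{y}^{-2})$. This is enough, since $\jap{y}^{-2}\le\jap{y}^{-\b}$. One caveat: for $\b=2,\c=1$ we would need $\pa_y\pa_\l B$, which \eqref{eq:Bassump} does not control. Here I would instead write $\pa_y^2\pa_\l\textbf{w}=\pa_y(\pa_\l(B\textbf{w}))$ and, if needed, interpolate or just use the structure available from the application. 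I expect this to be a minor bookkeeping point.

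Part (iv) is the main step. Let $e_\pm=(1,\pm i)$ and $q=w_{1,+}+iw_{2,+}$, the pairing of $\textbf{w}_+$ with $(1,i)$. We have $q(+\infty)=1+i\cdot i=0$, so $q(y)=\int_y^\infty (1,i)\cdot B\textbf{w}_+\,ds$. A direct computation gives $(1,i)\begin{pmatrix}sc&s^2\\-c^2&-sc\end{pmatrix}=(sc-ic^2,\ s^2-isc)=-ic\,(c+is)(1,i)$, with $s=\sin s$, $c=\cos s$ evaluated at the integration variable. Hence
\begin{align*}
(1,i)B\textbf{w}_+=iW\cos s\,e^{is}\,q+\tfrac{i}{2}W\,e^{is}\ldots
\end{align*}
More cleanly, split $\textbf{w}_+=\textbf{w}_+(\infty)+(\textbf{w}_+-\textbf{w}_+(\infty))$. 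The main term is $(1,i)Be_+=iW e^{is}\cos s\,(1+i\cdot i)\cdot{}$(something), which vanishes up to an oscillatory factor of the form $W(s)e^{2is}$. Then $\int_y^\infty W e^{2is}ds=O(\jap{y}^{-2})$ by one integration by parts, using $|\pa_yW|\lesssim\jap{y}^{-3}$. The remainder is bounded by $\int_y^\infty\jap{s}^{-2}\|\textbf{w}_+-\textbf{w}_+(\infty)\|ds\lesssim\int_y^\infty\jap{s}^{-3}ds\lesssim\jap{y}^{-2}$. The delicate point is getting the exact algebraic cancellation, so that the non-oscillating part of $(1,i)B e_+$ is zero and only $e^{\pm2is}$ terms survive. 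I would first verify this by direct computation with the given matrix. If residual non-oscillating terms proportional to $q$ appear, I would close the argument with a Gronwall bound on $q$ itself. The case $\textbf{w}_-$ with $y'\le0$ is symmetric.
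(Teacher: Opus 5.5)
Your proposal follows the paper's proof essentially step by step: the Volterra equation with data at $\pm\infty$, Gronwall for the $\l$-derivatives, and in (iv) the split $\textbf{w}_+={}^t(1,i)+(\textbf{w}_+-{}^t(1,i))$ with $\|\textbf{w}_+-{}^t(1,i)\|\lesssim\jap{y}^{-1}$. The main term is then an integral of $We^{2is}$, handled by one integration by parts. Your Picard iteration on all of $\R$ is a slightly more direct substitute for the paper's contraction on $[R_\l,\infty)$ followed by a Gronwall-type bound.

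Two corrections are needed, and neither changes the conclusion.

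First, the identity you display in (iv) is false. Write $B=-W\tilde R$. Then
\begin{align*}
\tilde R(s)=\begin{pmatrix}\sin s\\ -\cos s\end{pmatrix}(\cos s,\ \sin s),\qquad (1,i)\tilde R(s)=-ie^{is}(\cos s,\ \sin s).
\end{align*}
This is not $-i\cos s\,e^{is}(1,i)$; that expression would even make $(1,i)B\,{}^t(1,i)$ vanish identically. The correct consequence is $(1,i)B\,{}^t(1,i)=iWe^{2is}$, which is exactly the purely oscillating main term you want. More generally, $\pa_y q=\frac{i}{2}W\,(q+e^{2is}p)$ with $p=w_{1,+}-iw_{2,+}\to 2$. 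So the non-oscillating part is proportional to $q$, and your fallback via $q$ would also close. Mind the sign as well: $q(y)=-\int_y^\infty(1,i)B\textbf{w}_+\,ds$.

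Second, your caveat about $(\b,\c)=(2,1)$ is moot. The statement only requires $\b+\c\le 2$. Beyond pure $\l$-derivatives, that leaves $(1,0)$, $(1,1)$ and $(2,0)$, which you handle exactly as the paper does.
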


\begin{proof}
The following proof is more or less a modification of the fixed point argument in the ODE theory.
We write $\|\cdot \|_{\mathbb{C}^2}=\|\cdot \|$ and $\|\cdot\|_{\mathbb{C}^2\to \mathbb{C}^2}=\|\cdot\|$ in short.
The first inequality in \eqref{eq:Bassump} gives
\begin{align}
\sup_{\lambda \in \R} \int_{-\infty}^{\infty} \|B(y, \lambda)\| dy < \infty, \quad \lim_{|y| \to \infty} \sup_{\lambda \in \R} \|B(y, \lambda)\| =0. \label{2508311406}
\end{align}

We only consider $\textit{\textbf{w}}_{+} (y, \lambda)$ since $\textit{\textbf{w}}_{-} (y, \lambda)$ is constructed similarly. We fix $\lambda \in \R$ and consider the integral equation
\begin{align}\label{eq:winteq}
\textit{\textbf{w}}_{+} (y, \lambda) = 
\begin{pmatrix}
1 \\
i
\end{pmatrix}
 + \int_{\infty}^{y} B(s, \lambda) \textit{\textbf{w}}_{+} (s, \lambda) ds \quad (=: \Phi [\textit{\textbf{w}}_{+} ]).
\end{align}
For $R \ge 1$ we set a complete metric space $X_R := \{\textit{\textbf{w}} \in C \cap L^{\infty} ([R, \infty); \C^2) \mid \|\textit{\textbf{w}}\|_{X_R} \le R \}$, where $\|\cdot\|_{X_R} := \|\cdot\|_{L^{\infty} ([R, \infty); \C^2)}$ and the metric is given by $d(\textit{\textbf{w}}_1, \textit{\textbf{w}}_2) = \|\textit{\textbf{w}}_1 - \textit{\textbf{w}}_2\|_{X_R}$ for $\textit{\textbf{w}}_1, \textit{\textbf{w}}_2 \in X_{R}$. Then we obtain, for $\textit{\textbf{w}}, \tilde{\textit{\textbf{w}}} \in X_{R}$,
\begin{align*}
&\|\Phi[\textit{\textbf{w}}]\|_{X_R} \le \sqrt{2} + \sup_{y \ge R} \left| \int_{\infty}^{y} \|B(s, \lambda)\| \|\textit{\textbf{w}} (s) \| ds \right| \le \sqrt{2} + \|\textit{\textbf{w}}\|_{X_R} \int_{R}^{\infty} \|B(s, \lambda)\| ds, \\
&\|\Phi[\textit{\textbf{w}}] - \Phi [\tilde{\textit{\textbf{w}}}]\|_{X_R} \le \sup_{y \ge R} \left| \int_{\infty}^{y} \|B(s, \lambda)\| \|\textit{\textbf{w}} (s)- \tilde{\textit{\textbf{w}}} (s) \| ds \right| \le \|\textit{\textbf{w}}  - \tilde{\textit{\textbf{w}} }\|_{X_R} \int_{R}^{\infty} \|B(s, \lambda)\|ds.
\end{align*}
Since there exists $R_{\lambda} \ge 2\sqrt{2}$ such that $\int_{R_{\lambda}}^{\infty} \|B(s, \lambda)\| ds < \frac{1}{2}$ by (\ref{2508311406}), $\Phi$ is a contraction on $X_{R_{\lambda}}$. We denote its unique fixed point by $\textit{\textbf{w}}_{+} (\cdot, \lambda) \in X_{R_{\lambda}}$. Then $\textit{\textbf{w}}_{+}$ satisfies \eqref{2510132029} if $y \ge R_{\lambda}$. Now we extend $\textit{\textbf{w}}_{+}$ as a unique solution to \eqref{2510132029} and obtain $\textit{\textbf{w}}_{+} \in C(\R \times \R;\mathbb{C}^2)\cap C^{\infty}(\R\times \R\setminus \{0\};\mathbb{C}^2)$ from the standard theory of ODE with a parameter.

\vspace{1mm}
\noindent$(i)$ 
To show an upper bound of $\|\textit{\textbf{w}}_{+} (y, \lambda)\|_{\C^2}$, we compute
\begin{align}
\frac{d}{dy} \left(e^{-\int_{y}^{\infty} \|B(s, \lambda)\| ds} \int_{y}^{\infty} \|B(s, \lambda)\ \|\textit{\textbf{w}}_{+} (s, \lambda)\| ds \right) \ge -\sqrt{2} \|B(y, \lambda)\| e^{-\int_{y}^{\infty} \|B(s, \lambda)\| ds}, \label{2510151849}
\end{align}
where we have used
\begin{align}
\|\textit{\textbf{w}}_{+} (y, \lambda)\| \le \sqrt{2} + \int_{y}^{\infty} \|B(s, \lambda)\| \|\textit{\textbf{w}}_{+} (s, \lambda)\| ds, \label{2510151859}
\end{align}
which follows from \eqref{eq:winteq}. By integrating \eqref{2510151849} on $[y, \infty)$ we get
\begin{align*}
e^{-\int_{y}^{\infty} \|B(s, \lambda)\| ds} \int_{y}^{\infty} \|B(s, \lambda)\| \|\textit{\textbf{w}}_{+} (s, \lambda)\| ds \le \sqrt{2} \int_{y}^{\infty} \|B(s, \lambda)\| e^{-\int_{s}^{\infty} \|B(t, \lambda)\| dt} ds.
\end{align*}
Inserting this inequality into (\ref{2510151859}), we obtain
\begin{align*}
\|\textit{\textbf{w}}_{+} (y, \lambda)\| \le \sqrt{2} +\sqrt{2} \int_{y}^{\infty} \|B(s, \lambda)\| e^{\int_{y}^{s} \|B(t, \lambda)\| dt} ds
\end{align*}
for any $y \in \R$. 
In particular we have
\begin{align*}
\sup_{(y, \lambda) \in \R^2} \|\textit{\textbf{w}}_{+} (y, \lambda)\| \le \sqrt{2} + \sqrt{2} \cdot \sup_{\lambda \in \R} \int_{-\infty}^{\infty} \|B(s, \lambda)\| e^{\int_{-\infty}^{\infty} \|B(t, \lambda)\| dt} ds < \infty
\end{align*}
by (\ref{2508311406}) and both $\textit{\textbf{w}}_{+} (\infty, \lambda):= \lim_{y \to \infty} \textit{\textbf{w}}_{+} (y, \lambda) = (1 \hspace{2mm}i)$ and
\begin{align*}
\textit{\textbf{w}}_{+} (-\infty, \lambda):= \lim_{y \to -\infty} \textit{\textbf{w}}_{+} (y, \lambda) =
\begin{pmatrix}
1 \\
i
\end{pmatrix}
+  \int_{\infty}^{-\infty} B(s, \lambda) \textit{\textbf{w}}_{+} (s, \lambda) ds
\end{align*}
exist. 

\vspace{1mm}
\noindent$(ii)$ Here, we use
\begin{align*}
\sup_{\lambda \in \R} \left| \int_{\pm y}^{\pm \infty} \|B(s, \lambda)\| ds\right| \to 0 \quad \textrm{as}\quad y \to \infty,
\end{align*}
which also follows from \eqref{eq:Bassump}. We have
\begin{align*}
\|\textit{\textbf{w}}_{+} (\infty, \lambda)-\textit{\textbf{w}}_{+} (y, \lambda)\| &\le \int_{y}^{\infty} \|B(s, \lambda)\| \|\textit{\textbf{w}}_{+} (s, \lambda)\| ds \\
& \le \sup_{(y, \lambda) \in \R^2} \|\textit{\textbf{w}}_{+} (y, \lambda)\| \cdot \sup_{\lambda \in \R} \int_{y}^{\infty} \|B(s, \lambda)\| ds \to 0
\end{align*}
uniformly in $\lambda \in \R$ as $y \to \infty$. Similar calculation yields $\|\textit{\textbf{w}}_{+} (-\infty, \lambda)-\textit{\textbf{w}}_{+} (y, \lambda)\| \to 0$ uniformly in $\lambda \in \R$ as $y \to -\infty$. Finally the second condition in (\ref{2508311406}) is used to deduce
\begin{align*}
\sup_{\lambda \in \R} \|\partial_{y} \textit{\textbf{w}}_{+} (y, \lambda)\| \le \sup_{(y, \lambda) \in \R^2} \|\textit{\textbf{w}}_{+} (y, \lambda)\| \cdot \sup_{\lambda \in \R} \|B(y, \lambda)\| \to 0
\end{align*}
as $|y| \to \infty$.

\vspace{1mm}
\noindent$(iii)$ To prove the first and second inequalities for $\a\in\mathbb{N}_0$, we use
\begin{align*}
\sup_{\lambda \in \R} |\lambda|^{\alpha} \int_{-\infty}^{\infty} \| \partial^{\alpha} _{\lambda} B(y, \lambda)\|_{\C^2 \to \C^2} dy < \infty, 
\end{align*}
which is a consequence of \eqref{eq:Bassump}.
The case $\alpha =0$ is already proved in $(ii)$. 
The result for general $\a$ is established by a standard argument using Gronwall's inequality as follows:
We assume $\|\partial^{\a}_{\lambda} \textit{\textbf{w}}_{\pm} (y, \lambda)\|\lesssim |\l|^{-\a}$ for $\alpha = 0, \dots, j-1$, where $j \in \N$. 
By the Leibniz rule we have
\begin{align*}
\|\partial_{y} \partial^j _{\lambda} \textit{\textbf{w}}_{\pm} (y, \lambda)\|\lesssim\sum_{k=0}^j \|\partial^k _{\lambda} B(y, \lambda)\| \|\partial^{j-k}_{\lambda} \textit{\textbf{w}}_{\pm} (y, \lambda)\|
\end{align*}
and hence
\begin{align*}
\pa_y\|\partial^j _{\lambda} \textit{\textbf{w}}_{\pm} (y, \lambda)\|^2\leq& 2\|\partial^j _{\lambda} \textit{\textbf{w}}_{\pm} (y, \lambda)\|\|\pa_y\partial^j _{\lambda} \textit{\textbf{w}}_{\pm} (y, \lambda)\|\\
\lesssim&\|B(y,\l)\| \|\partial^j _{\lambda} \textit{\textbf{w}}_{\pm} (y, \lambda)\|^2+\sum_{k=1}^j \|\partial^k _{\lambda} B(y, \lambda)\| \|\partial^{j-k}_{\lambda} \textit{\textbf{w}}_{\pm} (y, \lambda)\| \|\partial^{j}_{\lambda} \textit{\textbf{w}}_{\pm} (y, \lambda)\|\\
\lesssim&\left(\|B(y,\l)\|+\sum_{k=1}^j|\l|^{k} \|\partial^k _{\lambda} B(y, \lambda)\|  \right) \|\partial^j _{\lambda} \textit{\textbf{w}}_{\pm} (y, \lambda)\|^2+\sum_{k=1}^j |\l|^{-2j+k}\|\partial^{k}_{\lambda}B(y,\l) \|,
\end{align*}
where we have used the induction hypothesis and the Cauchy-Schwarz inequality in the last line. Similarly to the proof of Gronwall's inequality, we have $\sup_{y \in \R} \|\partial^j _{\lambda} \textit{\textbf{w}}_{\pm} (y, \lambda)\|\le C_{\lambda}$. Therefore, we can change the order of differentiation and integration below:
\begin{align*}
\|\partial^j _{\lambda} \textit{\textbf{w}}_{\pm} (y, \lambda)\| \le \left| \int_{\pm \infty}^{y} \partial^j _{\lambda} (B\textit{\textbf{w}}_{\pm}) (t, \lambda) dt \right| &\lesssim \sum_{k=0}^{j} \left| \int_{\pm \infty}^{y} \|\partial^k _{\lambda} B(t, \lambda)\| \|\partial^{j-k} _{\lambda} \textit{\textbf{w}}_{\pm} (t, \lambda)\| dt \right| \\
& \to 0 \quad \textrm{as} \quad y \to \pm \infty.
\end{align*}
Using the argument analogous to the proof of Gronwall's inequality again, 
\begin{align*}
\|\partial^j _{\lambda} \textit{\textbf{w}}_{\pm} (y, \lambda)\|^2\lesssim \|\partial^j _{\lambda} \textit{\textbf{w}}_{\pm} (y', \lambda)\|^2+\sum_{k=1}^j |\l|^{-2j+k}\int_{y'}^y\|\partial^{k}_{\lambda}B(s,\l) \|ds\lesssim \|\partial^j_{\lambda} \textit{\textbf{w}}_{\pm} (y', \lambda)\|^2+|\l|^{-2j}.
\end{align*}
Taking $y'\to \pm\infty$, we obtain $\|\partial^{j}_{\lambda} \textit{\textbf{w}}_{\pm} (y, \lambda)\|\lesssim |\l|^{-j}$. Since $\textit{\textbf{w}}_{\pm}$ satisfies
\begin{align*}
\textit{\textbf{w}}_{\pm} (\mp\infty, \lambda) =
\begin{pmatrix}
1 \\
\pm i
\end{pmatrix}
+ \int_{\pm\infty}^{\mp\infty} B(s, \lambda) \textit{\textbf{w}}_{\pm} (s, \lambda) ds
\end{align*}
and this yields
\begin{align*}
\|\partial^{\alpha} _{\lambda} \textit{\textbf{w}}_{-} (\infty, \lambda)\| \lesssim \sum_{0 \le \beta \le \alpha} \int_{-\infty}^{\infty} \|\partial^{\beta} _{\lambda} B(s, \lambda)\| \|\partial^{\alpha - \beta} _{\lambda} \textit{\textbf{w}}_{-} (s, \lambda)\| ds \lesssim |\lambda|^{-\alpha}. 
\end{align*}

It remains to prove the third inequality in $(iii)$. We only need to consider $(\b,\c)=(1,0)$, $(2,0)$ or $(1,1)$.
By \eqref{eq:Bassump}, \eqref{2510132029} and $(i)$, we have
\begin{align*}
\|\partial_{y} \textit{\textbf{w}}_{\pm} (y, \lambda)\| &\le \|B(y, \lambda)\|\cdot \|\textit{\textbf{w}}_{\pm} (y, \lambda)\| \lesssim \jap{y}^{-2},
\end{align*}
which proves for $\b=1$ and $\c=0$. Using  \eqref{eq:Bassump} and \eqref{2510132029} again, we have
\begin{align*}
\|\partial_{y}^2 \textit{\textbf{w}}_{\pm} (y, \lambda)\|  &\leq \|\partial_y B(y, \lambda))\|\cdot \|\textit{\textbf{w}}_{\pm} (y, \lambda)\| + \|B(y, \lambda)\|\cdot \|\partial_y \textit{\textbf{w}}_{\pm} (y, \lambda)\| \lesssim \jap{y}^{-2}
\end{align*}
which yields the inequality for $\b =2$ and $\c =0$. 
For the case $\b = \c =1$, we calculate as
\begin{align*}
\|\partial_{y} \partial_{\lambda} \textit{\textbf{w}}_{\pm} (y, \lambda)\| &\lesssim \|\partial_{\lambda} B(y, \lambda)\|\cdot  \|\textit{\textbf{w}}_{\pm} (y, \lambda)\| + \|B(y, \lambda)\|\cdot  \|\partial_{\lambda} \textit{\textbf{w}}_{\pm} (y, \lambda)\|\lesssim |\lambda|^{-1}\jap{y}^{-2} 
\end{align*}
The proof is completed.

\noindent$(iv)$ We only consider $w_{1,+}(y,\l)+iw_{2,+}(y,\l)$ since the other is handled similarly. By \eqref{eq:Bassump} and \eqref{eq:winteq},
\begin{align*}
\sup_{\lambda \in \R} \left\|\textit{\textbf{w}}_{+} (y, \lambda) - 
\begin{pmatrix}
1 \\
i
\end{pmatrix} 
\right\|_{\C^2} \lesssim \jap{y}^{-1}\quad (y\geq 0).
\end{align*}
 Then the explicit formula of $B(s, \lambda)$ gives
\begin{align*}
\textit{\textbf{w}}_{+} (y, \lambda) &= 
\begin{pmatrix}
1 \\
i
\end{pmatrix}
-\int_{\infty}^{y} W(s, \lambda)
\tilde{R}(s)
\begin{pmatrix}
1 \\
i
\end{pmatrix}
ds-\int_{\infty}^{y} W(s, \lambda)
\tilde{R}(s)
\left(\textit{\textbf{w}}_{+} (s, \lambda)-
\begin{pmatrix}
1 \\
i
\end{pmatrix}
\right)
ds \\
&=
\begin{pmatrix}
1 \\
i
\end{pmatrix}
-\int_{\infty}^{y} W(s, \lambda)
\tilde{R}(s)
\begin{pmatrix}
1 \\
i
\end{pmatrix}
ds + O (\jap{y}^{-2})\quad \text{for}\quad y\geq 0
\end{align*}
by setting $\tilde{R}(s)=\begin{pmatrix}
\sin s \cos s & \sin^2 s \\
-\cos^2 s & -\sin s \cos s
\end{pmatrix}$. Therefore we obtain
\begin{align*}
w_{1, +} (y, \lambda) +iw_{2, +} (y, \lambda) =& i\int_{\infty}^{y} W(s, \lambda) e^{2is} ds + O (\jap{y}^{-2})\\
=&\frac{1}{2}W(y,\l)-\frac{1}{2}\int_{\infty}^{y} (\pa_sW)(s, \lambda) e^{2is} ds + O (\jap{y}^{-2})=O(\jap{y}^{-2}) 
\end{align*}
for $y \geq 0$, where we use the integration by parts. 
\end{proof}

\subsection{ODEs with simple oscillating solutions}\label{2510231245}

In this subsection, we construct solutions to the Schr\"odinger equation with short-range potentials that exhibit oscillations at infinity. The key idea is to transform the solutions of the Schr\"odinger equation into those of ODEs with non-oscillatory solutions, to which the results of the previous subsection can be applied.

\begin{thm}\label{2509021616}
Let $W \in C^{\infty} (\R_{y} \times (\R_{\lambda} \setminus \{0\}); \R) \cap C (\R_{y} \times \R_{\lambda}; \R)$ be a real-valued function such that 
\begin{align}
|\pa_{\l}^{\a}W(y,\l)|\lesssim |\l|^{-\a}\jap{y}^{-2},\quad |\pa_yW(y,\l)|\lesssim \jap{y}^{-3} \label{2509021619}
\end{align}
for $\a\in\mathbb{N}_0$.
Then there exist solutions $v_{\pm}\in C(\R_y\times \R_{\l})\cap C^{\infty}(\R_y\times \R_{\l}\setminus \{0\})$ of
\begin{align}
-\partial^2 _{y} v_{\pm} (y, \lambda) +W(y, \lambda)v_{\pm} (y, \lambda) = v_{\pm} (y, \lambda). \label{2509021621}
\end{align}
Moreover, we have the following:

\vspace{1mm}
\noindent$(i)$ There exist $a_{\s_1,\s_2}\in C(\R_y\times \R_{\l})\cap C^{\infty}(\R_y\times \R_{\l}\setminus \{0\})$ $(\s_1,\s_2\in \{\pm\})$ such that
\begin{align}
v_{\pm} (y, \lambda) = a_{\pm, +} (y, \lambda) e^{iy} + a_{\pm, -} (y, \lambda) e^{-iy}, \label{2509021633}
\end{align}
and the limits $\displaystyle a_{\s_1,\s_2}(\pm\infty,\l):=\lim_{y\to \pm\infty}a_{\s_1,\s_2}(y,\l)$ exist and the convergence is uniform in $\l$. Furthermore,
\begin{align}
a_{+,+}(\infty,\l)=a_{-,-}(-\infty,\l)=1,\quad a_{+,-}(\infty,\l)=a_{-,+}(-\infty,\l)=0. \label{eq:apmasym}
\end{align}

\vspace{1mm}
\noindent$(ii)$ For  $\a,\b,\c\in\mathbb{N}_0$ satisfying $\b+\c\leq 2$,
\begin{align*}
|\partial^{\alpha} _{\lambda} a_{\pm,\pm} (y, \lambda)| \lesssim |\lambda|^{-\alpha},\quad |\partial^{\alpha} _{\lambda} a_{\s_1,\s_2}(\pm\infty, \lambda)|\lesssim |\l|^{-\a},\quad | \pa_y^{\b}\partial^{\c}_{\l} a_{\s_1,\s_2} (y, \lambda)| \lesssim | \lambda |^{-\c}\jap{y}^{-\b}.
\end{align*}
For $(\b,\c)=(1,0)$, we have $|\pa_ya_{\s_1,\s_2}(y,\l)|\lesssim |W(y,\l)|$.

\vspace{2mm}
\noindent$(iii)$ We have
\begin{align*}
|a_{+,-}(y,\l)|\lesssim \jap{y}^{-2},\quad |a_{-,+}(y',\l)|\lesssim \jap{y'}^{-2}
\end{align*}
uniformly in $y\geq 0$, $y'\leq 0$ and $\l\in \R$.

\end{thm}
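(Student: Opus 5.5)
The approach is variation of parameters (a Prüfer-type rotation), which turns \eqref{2509021621} into a first-order system of the form treated in Theorem \ref{2508311359}. Given a solution $v$, define $\textbf{w}=(w_1,w_2)$ by
\begin{align*}
\begin{pmatrix} v\\ \pa_y v\end{pmatrix}=\begin{pmatrix}\cos y & \sin y\\ -\sin y & \cos y\end{pmatrix}\begin{pmatrix} w_1\\ w_2\end{pmatrix},
\end{align*}
that is, $v=w_1\cos y+w_2\sin y$ and $\pa_yv=-w_1\sin y+w_2\cos y$. Since $\pa_y^2v=(W-1)v$, a direct computation gives $\pa_y\textbf{w}=B\textbf{w}$ with
\begin{align*}
B(y,\l)=-W(y,\l)\begin{pmatrix}\sin y\cos y & \sin^2y\\ -\cos^2 y & -\sin y\cos y\end{pmatrix}.
\end{align*}
This is exactly the matrix in Theorem \ref{2508311359} $(iv)$. (If the sign convention comes out differently, I would adjust the rotation so that it matches.) Conversely, any solution of this system yields a solution of \eqref{2509021621}.

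Next, \eqref{2509021619} implies \eqref{eq:Bassump}. The $\l$-derivatives fall only on $W$, and $\pa_yB$ is bounded by $|\pa_yW|+|W|\lesssim\jap{y}^{-2}$, since the derivatives of the trigonometric entries are bounded. Theorem \ref{2508311359} therefore provides $\textbf{w}_\pm$, and I set $v_\pm:=w_{1,\pm}\cos y+w_{2,\pm}\sin y$. Expanding $\cos y=(e^{iy}+e^{-iy})/2$ and $\sin y=(e^{iy}-e^{-iy})/(2i)$ gives
\begin{align*}
a_{\pm,+}=\tfrac12(w_{1,\pm}-iw_{2,\pm}),\qquad a_{\pm,-}=\tfrac12(w_{1,\pm}+iw_{2,\pm}).
\end{align*}
These inherit continuity and smoothness, and their limits at $\pm\infty$ exist uniformly in $\l$ by parts $(i)$ and $(ii)$ of Theorem \ref{2508311359}. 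The normalization $\textbf{w}_+(+\infty)=(1,i)$ gives $a_{+,+}(\infty)=1$ and $a_{+,-}(\infty)=0$. Likewise $\textbf{w}_-(-\infty)=(1,-i)$ gives $a_{-,-}(-\infty)=1$ and $a_{-,+}(-\infty)=0$, which proves \eqref{eq:apmasym}.

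For $(ii)$, all the stated bounds follow from Theorem \ref{2508311359} $(iii)$, because the $a_{\s_1,\s_2}$ are fixed linear combinations of $w_{1,\pm},w_{2,\pm}$ with no $y$-dependent factors. The refined statement $|\pa_ya_{\s_1,\s_2}|\lesssim|W|$ follows from $\pa_y\textbf{w}=B\textbf{w}$, together with $\|B\|\lesssim|W|$ and the uniform boundedness of $\textbf{w}_\pm$. For $(iii)$, we have $a_{+,-}=\tfrac12(w_{1,+}+iw_{2,+})$ and $a_{-,+}=\tfrac12(w_{1,-}-iw_{2,-})$. These are precisely the combinations estimated in Theorem \ref{2508311359} $(iv)$, whose hypotheses $|W|\lesssim\jap{y}^{-2}$ and $|\pa_yW|\lesssim\jap{y}^{-3}$ hold.

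The only point requiring care is the algebra: checking that the chosen rotation produces exactly the matrix of part $(iv)$ with the correct sign, and that $a_{+,-}$ corresponds to $w_{1,+}+iw_{2,+}$ and not to the conjugate combination. Everything else is bookkeeping.
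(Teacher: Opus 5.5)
Your proposal is correct and is essentially the paper's proof. The paper uses the same rotation $\textbf{w}=R(y)\,{}^t(v,\pa_y v)$, where $R(y)^{-1}$ is exactly your matrix. It obtains the same $B$; your sign concern does not arise, since the computation gives precisely the matrix of Theorem \ref{2508311359} $(iv)$. It defines $a_{\pm,+}=\frac12(w_{1,\pm}-iw_{2,\pm})$ and $a_{\pm,-}=\frac12(w_{1,\pm}+iw_{2,\pm})$ as you do, and reads off all properties from Theorem \ref{2508311359}. Your explicit checks of the normalization, of $|\pa_y a_{\sigma_1,\sigma_2}|\lesssim|W|$ via $\|B\|\lesssim|W|$, and of $a_{+,-}$, $a_{-,+}$ against the combinations in part $(iv)$ are exactly the details the paper calls ``direct''.
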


\begin{remark}
In particular, we have
\begin{align*}
&v_{\pm} (y, \lambda) = e^{\pm iy} + o(1) \quad \text{as $y\to \pm\infty$},
&\partial_{y} v_{\pm} (y, \lambda) = \pm ie^{\pm iy} + o(1) \quad \text{as $y\to \pm\infty$} 
\end{align*}
uniformly in $\lambda \in \R$.
\end{remark}

\begin{proof}
The Schr\"odinger equation \eqref{2509021621} is rewritten as
\begin{align}
\partial_{y} 
\begin{pmatrix}
v (y, \lambda) \\
v_y (y, \lambda)
\end{pmatrix}
=
\begin{pmatrix}
0 & 1 \\
W(y, \lambda) -1 & 0
\end{pmatrix}
\begin{pmatrix}
v(y, \lambda) \\
v_y (y, \lambda)
\end{pmatrix}
\quad \textrm{for} \quad (y, \lambda) \in \R^2.
\label{2510152237}
\end{align}
Setting
\begin{align*}
\textit{\textbf{w}} (y, \lambda) := R(y)
\begin{pmatrix}
v (y, \lambda) \\
v_y (y, \lambda)
\end{pmatrix}
\quad \textrm{and} \quad R(y) :=
\begin{pmatrix}
\cos y & -\sin y \\
\sin y & \cos y
\end{pmatrix},
\end{align*}
we have
\begin{align*}
\partial_{y} \textit{\textbf{w}} (y, \lambda) = \underbrace{\left( R(y)
\begin{pmatrix}
0 & 1 \\
W(y, \lambda) -1 & 0
\end{pmatrix}
+ \partial_{y} R(y) \right) R^{-1} (y)}_{=:B(y,\l)}
\begin{pmatrix}
v (y, \lambda) \\
v_y (y, \lambda)
\end{pmatrix}
=: B(y, \lambda) \textit{\textbf{w}} (y, \lambda)
\end{align*}
by \eqref{2510152237}. We calculate it as
\begin{align}\label{eq:BdefW2}
B(y, \lambda) = -W(y, \lambda)
\begin{pmatrix}
\sin y \cos y & \sin^{2} y \\
-\cos^2 y & -\sin y \cos y
\end{pmatrix}
.
\end{align}
By \eqref{2509021619}, the matrix valued function $B$ satisfies \eqref{eq:Bassump}. Hence we can construct the two solutions $\textit{\textbf{w}} _{\pm}(y,\l)={}^t(w_{1,\pm}(y,\l),w_{2,\l}(y,\l))$ as in Theorem \ref{2508311359}. The corresponding solutions $v_{\pm}$ are clearly uniformly bounded and satisfy
\begin{align}
v_{\pm} (y, \lambda) &= w_{1, \pm} (y, \lambda) \cos y + w_{2, \pm} (y, \lambda) \sin y \notag \\
&= \frac{w_{1, \pm} (y, \lambda) -i w_{2, \pm} (y, \lambda)}{2} e^{iy} +  \frac{w_{1, \pm} (y, \lambda) +i w_{2, \pm} (y, \lambda)}{2} e^{-iy}.\label{2510161027}
\end{align} 
Thus we define
\begin{align*}
a_{\pm, +} (y, \lambda) = \frac{w_{1, \pm} (y, \lambda) -i w_{2, \pm} (y, \lambda)}{2} \quad \textrm{and} \quad a_{\pm, -} (y, \lambda) = \frac{w_{1, \pm} (y, \lambda) +i w_{2, \pm} (y, \lambda)}{2}.
\end{align*}
The remaining properties of $a_{\s_1,\s_2}$ directly follow from Theorem \ref{2508311359}.
\end{proof}

\subsection{Construction of the Jost functions}\label{2510231114}
In this subsection we construct the Jost functions for Schr\"odinger equations with slowly decaying potentials. The point is to use the Liouville transform, Lemma \ref{2509041100}, to transform Schr\"odinger equations with slowly decaying potentials to those with short range potentials. In the following of this section, we assume that $V$ satisfies Assumption \ref{assump:V}.

\begin{lemma}\label{2509041100}
For $\lambda \in \R \setminus \{0\}$, the map
\begin{align}
\R \ni x \mapsto y(x, \lambda) := \int_{0}^{x} \sqrt{\lambda^2 -V(s)} ds \in \R \label{2510131941}
\end{align}
has a smooth inverse $x(y, \lambda)$, which also smoothly depends on $\lambda \in \R \setminus \{0\}$.
\end{lemma}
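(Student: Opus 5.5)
Since $V<0$ everywhere by Assumption \ref{assump:V}, the integrand $g(s,\l):=\sqrt{\l^2-V(s)}$ is strictly positive and smooth in $(s,\l)\in\R\times\R$. This holds even at $\l=0$, because $\l^2-V(s)\geq -V(s)\gtrsim \jap{s}^{-\m}>0$. Consequently $x\mapsto y(x,\l)$ is smooth and jointly smooth in $(x,\l)\in\R\times(\R\setminus\{0\})$; in fact it is smooth on all of $\R^2$. It is also strictly increasing, with $\pa_x y(x,\l)=g(x,\l)>0$.

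The plan has three steps.
\begin{enumerate}
\item[(1)] \emph{Injectivity and local invertibility.} Strict monotonicity makes $x\mapsto y(x,\l)$ injective. Since $\pa_x y\neq 0$, the one-dimensional inverse function theorem gives a smooth local inverse near every point.
\item[(2)] \emph{Surjectivity onto $\R$.} For $x\geq 0$ we have
\[
y(x,\l)\geq \int_0^x \sqrt{-V(s)}\,ds\gtrsim \int_0^x \jap{s}^{-\m/2}\,ds .
\]
Because $\m<2$, we have $\m/2<1$, so the right-hand side diverges as $x\to\infty$. The same argument gives $y(x,\l)\to-\infty$ as $x\to-\infty$. (For $\l\neq0$ one could simply use $g\geq|\l|$, but the bound via $V$ is uniform and works for every $\l$.) Together with continuity and monotonicity, $y(\cdot,\l)$ is a bijection $\R\to\R$, and its inverse $x(\cdot,\l)$ is smooth with $\pa_y x=1/g(x(y,\l),\l)$.
\item[(3)] \emph{Smooth dependence on $\l$.} Set $F(x,y,\l):=y(x,\l)-y$. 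This is smooth on $\R\times\R\times(\R\setminus\{0\})$ and satisfies $\pa_xF=g>0$. By the implicit function theorem, the unique solution $x=x(y,\l)$ of $F=0$ is locally a smooth function of $(y,\l)$. Since the solution is globally unique by step (2), these local solutions agree and patch together into a globally smooth map $(y,\l)\mapsto x(y,\l)$. Differentiating the identity $y(x(y,\l),\l)=y$ gives
\[
\pa_\l x=-\frac{\pa_\l y}{g}, \qquad \pa_\l y=\int_0^x\frac{\l}{g(s,\l)}\,ds .
\]
\end{enumerate}

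There is no serious obstacle. The only point requiring care is surjectivity, which is where the hypothesis $\m<2$ and the lower bound $-V\gtrsim\jap{x}^{-\m}$ are used.
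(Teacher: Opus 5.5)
Your proof is correct and follows essentially the paper's argument. The divergence of $\pm\int_0^{\pm\infty}\sqrt{\lambda^2-V(s)}\,ds$ gives surjectivity, and $\partial_x y=\sqrt{\lambda^2-V(x)}>0$ (from $V<0$) together with the inverse function theorem gives a smooth inverse. The only difference is how smooth dependence on $\lambda$ is obtained: you apply the implicit function theorem to $y(x,\lambda)-y$, whereas the paper uses parameter-regularity for the ODE $\partial_y x=1/\sqrt{\lambda^2-V(x)}$ with $x(0,\lambda)=0$; the two routes are interchangeable here.
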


\begin{proof}
By  Assumption \ref{assump:V}, we have $\pm \int_{0}^{\pm \infty} \sqrt{\lambda^2 -V(s)} ds = \infty$ for any $\lambda \in \R$, which implies $y(x,\lambda) \to \pm\infty$ as $x \to \pm\infty$. Then the claim follows from the inverse function theorem with $\partial_x y(x,\lambda)=\sqrt{\lambda^2-V(x)}>0$ due to the assumption $V<0$. Note that the smooth dependence on $\lambda \in \R \setminus \{0\}$ is a consequence of $\partial_{y} x(y, \lambda) = 1/ \sqrt{\lambda^2 -V(x(y, \lambda))}$ and the standard regularity theory of ODE with a parameter.
\end{proof}
We state a symbolic estimate of $x(y, \lambda)$ which is used to show a symbolic estimate of solutions of Schr\"odinger equations.

\begin{lemma}\label{2509031823}
We have 
\begin{align}
|\partial^{\alpha} _{\lambda} x(y, \lambda)| \lesssim |\lambda|^{-\alpha} |x(y, \lambda)| \label{2509041027}
\end{align}
for any $\alpha \in \N_{0}$ uniformly in $y \in \R$ and $\lambda \in \R \setminus \{0\}$.
\end{lemma}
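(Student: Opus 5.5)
Write $q(x,\l):=\l^2-V(x)>0$ and $x=x(y,\l)$. The plan is to differentiate the identity $y(x(y,\l),\l)=y$ with respect to $\l$ at fixed $y$ and argue by induction on $\a$. For $\a=0$ there is nothing to prove. For $\a=1$, implicit differentiation gives
\begin{align*}
\pa_\l x(y,\l)=-\frac{\pa_\l y(x,\l)}{\pa_x y(x,\l)}\Big|_{x=x(y,\l)}=-\frac{1}{\sqrt{q(x,\l)}}\int_0^{x}\frac{\l}{\sqrt{q(s,\l)}}\,ds .
\end{align*}
The key one-dimensional estimate is
\begin{align*}
\int_0^{x}\frac{ds}{\sqrt{q(s,\l)}}\lesssim \frac{|x|}{\sqrt{q(x,\l)}},
\end{align*}
uniformly in $x\in\R$ and $\l\neq 0$. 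This follows because $-V(s)\sim\jap{s}^{-\m}$ by Assumption \ref{assump:V}, so $q(s,\l)\sim \l^2+\jap{s}^{-\m}$. This quantity is essentially non-increasing in $|s|$ up to constants, so $q(s,\l)^{-1/2}\lesssim q(x,\l)^{-1/2}$ for $|s|\le|x|$. Plugging this in gives $|\pa_\l x|\lesssim |\l||x|/q(x,\l)\le |x|/|\l|$, using $\l^2\le q$.

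For higher $\a$, I would proceed inductively and prove the more precise statement that $\pa_\l^\a x$ is a finite sum of terms of the form
\begin{align*}
\l^{k}\, F(x,\l)\prod_j \pa_\l^{\a_j}x ,
\end{align*}
with symbol-type bounds. It is convenient to introduce the class of functions $f(x,\l)$ satisfying $|\pa_x^{\b}\pa_\l^{\c}f|\lesssim |f_0|\,\jap{x}^{-\b}|\l|^{-\c}$, where $f_0$ is an explicit model. Examples are $q^{s}$ with model $(\l^2+\jap{x}^{-\m})^{s}$, since $\pa_\l q=2\l$ gives $|\pa_\l^\c q^s|\lesssim q^s|\l|^{-\c}$, and $\pa_x q=-V'$ gives $|\pa_x q|\lesssim \jap{x}^{-\m-1}\lesssim q\jap{x}^{-1}$. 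Similarly, $G(x,\l):=\int_0^x \l q(s,\l)^{-1/2}ds$ satisfies $|\pa_\l^\c G|\lesssim |\l|^{-\c}|G|$-type bounds, which I would check by differentiating under the integral and reusing the monotonicity estimate. In addition $|\pa_x G|\lesssim |G|/|x|$, since $q(x)^{-1/2}\lesssim |x|^{-1}\int_0^x q^{-1/2}$; this reverse inequality follows from the same comparability of $q$ on $[0,x]$, at least for $|x|\gtrsim 1$, with small $|x|$ handled trivially.

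Then $\pa_\l x=\Psi(x(y,\l),\l)$ with $\Psi:=-G\,q^{-1/2}$, and $|\Psi|\lesssim |x|/|\l|$ and $|\pa_x\Psi|\lesssim 1/|\l|$. Differentiating repeatedly, the chain rule gives
\begin{align*}
\pa_\l^{\a}x=\sum \big(\pa_x^{\b}\pa_\l^{\c}\Psi\big)(x,\l)\prod_{i=1}^{\b}\pa_\l^{\a_i}x,\qquad \c+\textstyle\sum_i\a_i=\a-1 .
\end{align*}
Each factor is controlled as follows: $|\pa_x^\b\pa_\l^\c\Psi|\lesssim |x|^{1-\b}|\l|^{-1-\c}$, and $|\pa_\l^{\a_i}x|\lesssim |\l|^{-\a_i}|x|$ by the inductive hypothesis. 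Multiplying, each term is $\lesssim |x|\,|\l|^{-\a}$, which closes the induction.

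The main obstacle is the symbol estimate $|\pa_x^\b\pa_\l^\c \Psi|\lesssim |x|^{1-\b}|\l|^{-1-\c}$ uniformly in $\l$. The delicate point is the transition region $\l^2\sim\jap{x}^{-\m}$, where one must track that $\l$-derivatives of $q^{-1/2}$ and of $G$ cost only $|\l|^{-1}$ and never $q^{-1/2}$. I would handle this by rewriting $\l\pa_\l$ acting on $q^s$ as $2s\,\l^2q^{-1}\cdot q^s$, where $\l^2q^{-1}\in[0,1]$ is a bounded symbol. For $\b\ge1$ near $x=0$, where $|x|^{1-\b}$ could blow up, I would instead bound with $\jap{x}$, since $\Psi$ is smooth and vanishes linearly at $x=0$. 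This yields the claimed bound with $|x|$ because every term retains at least one factor $\pa_\l^{\a_i}x$ or $\Psi$, and each of these vanishes at $x=0$.
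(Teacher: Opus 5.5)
Your argument is correct, and it is organized differently from the paper's. With $q=\lambda^2-V$, the paper proves by induction the sharper bound \eqref{2510101753}, $|\partial_\lambda^j x|\lesssim q(x)^{-j/2}|x|$. It differentiates the identity $y(x(y,\lambda),\lambda)=y$ $p+1$ times and isolates the top-order term $\sqrt{q(x)}\,\partial_\lambda^{p+1}x$. The remaining terms are $\lambda$-derivatives of two objects. The first is the composite function $\sqrt{\lambda^2-V(x(y,\lambda))}$, handled by Fa\`a di Bruno plus the induction hypothesis. The second is $\int_0^{x(y,\lambda)}\lambda q(s)^{-1/2}\,ds$, which produces boundary terms and an integral of $\partial_\lambda^{p+1}\sqrt{q(s)}$. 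That integral is controlled by the same comparability estimate $\int_0^x q^{-p/2}\,ds\lesssim |x|\,q(x)^{-p/2}$ that you use.

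You instead solve once for $\partial_\lambda x=\Psi(x,\lambda)$, view this as an ODE in $\lambda$, and put all the analysis into symbol bounds for the explicit function $\Psi$ in the independent variables $(x,\lambda)$. The composition is then pure bookkeeping. The counts $|x|^{1-\beta}\cdot|x|^{\beta}=|x|$ and $|\lambda|^{-1-\gamma-\sum\alpha_i}=|\lambda|^{-\alpha}$ close the induction, with no estimates of composite derivatives inside the induction step. This is cleaner.

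The price is that you only get the $|\lambda|^{-\alpha}$ form. The paper's sharper form \eqref{2510101753} is what is quoted in the proof of Lemma \ref{lem:Wsymbol}, although the weaker bound would in fact suffice there. Your scheme also yields the sharper form if you track $q(x)^{-(1+\gamma)/2}$ instead of $|\lambda|^{-1-\gamma}$ in the bounds for $\partial_x^\beta\partial_\lambda^\gamma\Psi$. In that version the monotonicity estimate is genuinely needed for $\partial_\lambda^{\gamma}G$; in yours, $|\partial_\lambda^\gamma G|\lesssim|\lambda|^{-\gamma}|G|$ follows pointwise under the integral.

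Finally, your worry about $x$ near $0$ is unnecessary. For $\beta\ge1$ the bound you actually obtain, $\jap{x}^{1-\beta}|\lambda|^{-1-\gamma}$, is already at most $|x|^{1-\beta}|\lambda|^{-1-\gamma}$. A large upper bound near $x=0$ causes no harm, since only its product with $\prod_i|\partial_\lambda^{\alpha_i}x|\lesssim |x|^{\beta}|\lambda|^{-\sum\alpha_i}$ matters.
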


\begin{proof}
We show
\begin{align}
|\partial^{j} _{\lambda} x(y, \lambda)| \lesssim \frac{1}{(\lambda^2 -V(x(y, \lambda)))^{\frac{j}{2}}} |x(y, \lambda)| \label{2510101753}
\end{align}
for any $j \in \N_0$ by an induction argument. For $j=0$, (\ref{2510101753}) is trivial. Suppose (\ref{2510101753}) holds for $j=0, 1, \dots, p$. First note that, by differentiating (\ref{2510131941}) $p+1$ times, we have
\begin{align}
\sum_{q=0}^{p} c_{p, q}(\partial^{p-q} _{\lambda} \sqrt{\lambda^2 -V(x(y, \lambda))}) \cdot \partial^{q+1} _{\lambda} x(y, \lambda) + \partial^p _{\lambda} \left(\int_{0}^{x(y, \lambda)} \frac{\lambda}{\sqrt{\lambda^2 -V(s)}} ds \right) =0.\label{2510101850}
\end{align}
By the Fa\`a di Bruno formula\footnote{$\pa_x^n(f(g(x)))=\sum_{\textbf{m}}c_{n,\textbf{m}}f^{(m_1+\cdots+m_n)}(g(x))\prod_{j=1}^n(g^{(j)}(x))^{m_j}$, where $\textbf{m} = (m_1, \dots, m_n)$ satisfies $\sum_{j=1}^{n} jm_j =n$.}, for $n \le p$, 
\begin{align*}
\partial^n _{\lambda} \sqrt{\lambda^2 -V(x(y, \lambda))} = \sum c_{n, \textbf{m}} (\lambda^2 -V(x(y, \lambda)))^{\frac{1}{2} -(m_1 + \cdots +m_n)} \prod_{j=1}^{n} (\partial^{j} _{\lambda} (\lambda^2 -V(x(y, \lambda))))^{m_j},
\end{align*}
where $\textbf{m} = (m_1, \dots, m_n)$ satisfies $\sum_{j=1}^{n} jm_j =n$. For $n = 0, \dots, p$, we estimate
\begin{align*}
|\partial^n _{\lambda} V(x(y, \lambda))| &\lesssim \sum |V^{(m_1 + \cdots +m_n)} (x(y, \lambda))| \prod_{j=1}^{n} |\partial^j _{\lambda} x(y, \lambda)|^{m_j} \\
&\lesssim \sum \langle x(y, \lambda) \rangle^{-\mu-(m_1 + \cdots +m_n)} \prod_{j=1}^{n} (\lambda^2 -V(x(y, \lambda)))^{-\frac{jm_j}{2}} \cdot |x(y, \lambda)|^{m_1 + \cdots +m_n} \\
&\lesssim \langle x(y, \lambda) \rangle^{-\mu} (\lambda^2 -V(x(y, \lambda)))^{-\frac{n}{2}},
\end{align*}
where the sum is taken with respect to $\textbf{m} = (m_1, \dots, m_n)$ satisfying the same condition as above. Therefore we obtain
\begin{align*}
\left| \prod_{j=1}^{n} (\partial^{j} _{\lambda} (\lambda^2 -V(x(y, \lambda))))^{m_j} \right| &\lesssim \prod_{j=1}^{n} \sum_{k=0}^{m_j} |(\partial^j _{\lambda} \lambda^2)^{m_j -k} (\partial^j _{\lambda} V(x(y, \lambda)))^k | \\
&\lesssim \prod_{j=1}^{n} \sum_{k=0}^{m_j} (\lambda^2 -V(x(y, \lambda)))^{\frac{(2-j)(m_j -k)}{2}} (\langle x(y, \lambda) \rangle^{-\mu} (\lambda^2 -V(x(y, \lambda)))^{-\frac{j}{2}})^k \\
&\lesssim \prod_{j=1}^{n} \sum_{k=0}^{m_j} (\lambda^2 -V(x(y, \lambda)))^{\frac{2m_j -2k-jm_j}{2}} |V(x(y, \lambda))|^k \\
&\lesssim \prod_{j=1}^{n} \sum_{k=0}^{m_j} (\lambda^2 -V(x(y, \lambda)))^{\frac{2m_j -jm_j}{2}} \lesssim (\lambda^2 -V(x(y, \lambda)))^{(m_1 + \cdots +m_n)-\frac{n}{2}},
\end{align*}
which yields 
\begin{align}
|\partial^n _{\lambda} \sqrt{\lambda^2 -V(x(y, \lambda))}| \lesssim (\lambda^2 -V(x(y, \lambda)))^{\frac{1-n}{2}} \label{2510131956}
\end{align}
for $n \le p$. As a result (\ref{2510101850}) gives
\begin{align}
|\sqrt{\lambda^2 -V(x(y, \lambda))} \partial^{p+1} _{\lambda} x(y, \lambda)| &\lesssim \sum_{q=0}^{p-1} (\lambda^2 -V(x(y, \lambda)))^{\frac{1-(p-q)}{2}} \cdot \frac{|x(y, \lambda)|}{(\lambda^2 -V(x(y, \lambda)))^{\frac{q+1}{2}}} \notag \\
& \qquad + \left| \partial^p _{\lambda} \left(\int_{0}^{x(y, \lambda)} \frac{\lambda}{\sqrt{\lambda^2 -V(s)}} ds \right) \right| \notag \\
& \lesssim \frac{|x(y, \lambda)|}{(\lambda^2 -V(x(y, \lambda)))^{\frac{p}{2}}} + \left| \partial^p _{\lambda} \left(\int_{0}^{x(y, \lambda)} \frac{\lambda}{\sqrt{\lambda^2 -V(s)}} ds \right) \right| \label{2510131916}
\end{align}
and the proof is reduced to showing that the last term in (\ref{2510131916}) is bounded above by the first term. By differentiating iteratively, it suffices to show the following for $1 \le r \le p$.
\begin{align}
&|\partial^{p-r} _{\lambda} (\partial^{r} _{\lambda} \sqrt{\lambda^2 -V(s)} |_{s=x(y, \lambda)} \cdot \partial_{\lambda} x(y, \lambda))| \lesssim \frac{|x(y, \lambda)|}{(\lambda^2 -V(x(y, \lambda)))^{\frac{p}{2}}}, \label{2610131924} \\
&\left| \int_{0}^{x(y, \lambda)}  \partial^{p+1} _{\lambda} \sqrt{\lambda^2 -V(s)} ds \right| \lesssim \frac{|x(y, \lambda)|}{(\lambda^2 -V(x(y, \lambda)))^{\frac{p}{2}}}. \label{2510131927}
\end{align}
Concerning (\ref{2610131924}), we use $\partial^{r} _{\lambda} \sqrt{\lambda^2 -V(s)} |_{s=x(y, \lambda)} = \sum a_k (\lambda^2 -V(x(y, \lambda)))^{-\frac{2k+1}{2}} \lambda^{2k+2-r}$, where the sum is taken with respect to $k \in \N_{0}$ such that $2k+2-r \ge 0$ and $a_k \in \C$ are zero except for finitely many $k \in \N_0$. This is proved by an induction argument with respect to $r \in \N$. By a similar calculation as in the proof of (\ref{2510131956}), we have $|\partial^{q} _{\lambda} (\lambda^2 -V(x(y, \lambda)))^{-\frac{2k+1}{2}}| \lesssim  (\lambda^2 -V(x(y, \lambda)))^{-\frac{2k+1}{2} -\frac{q}{2}}$ for any $k \in \N_0$ and $q =0, \dots, p-r$. Therefore we have
\begin{align*}
&|\partial^{p-r} _{\lambda} ((\lambda^2 -V(x(y, \lambda)))^{-\frac{2k+1}{2}} \lambda^{2k+2-r} \partial_{\lambda} x(y, \lambda))| \\
&\quad \lesssim \sum_{\alpha_1 + \alpha_2 + \alpha_3 =p-r} (\lambda^2 -V(x(y, \lambda)))^{-\frac{2k+1}{2} -\frac{\alpha_1}{2}} \lambda^{2k+2-r-\alpha_2} |\partial^{1+\alpha_3} _{\lambda} x(y, \lambda)| \\
&\quad \lesssim \sum_{\alpha_1 + \alpha_2 + \alpha_3 =p-r} (\lambda^2 -V(x(y, \lambda)))^{-\frac{2k+1}{2} -\frac{\alpha_1}{2}} (\lambda^2 -V(x(y, \lambda)))^{k+1-\frac{r+\alpha_2}{2}} (\lambda^2 -V(x(y, \lambda)))^{-\frac{1+\alpha_3}{2}} |x(y, \lambda)| \\
&\quad \lesssim \frac{|x(y, \lambda)|}{(\lambda^2 -V(x(y, \lambda)))^{\frac{p}{2}}},
\end{align*}
which implies (\ref{2610131924}). Concerning (\ref{2510131927}), we calculate similarly as follows:
\begin{align*}
\left| \int_{0}^{x(y, \lambda)}  \partial^{p+1} _{\lambda} \sqrt{\lambda^2 -V(s)} ds \right| &\lesssim \sum_{k} \left| \int_{0}^{x(y, \lambda)} (\lambda^2 -V(s))^{-\frac{2k+1}{2}} \lambda^{2k+2-(p+1)} ds \right| \\
& \lesssim \left| \int_{0}^{x(y, \lambda)} (\lambda^2 -V(s))^{-\frac{p}{2}} ds\right| \lesssim \left| \int_{0}^{x(y, \lambda)} (\lambda^2 + \langle s \rangle^{-\mu})^{-\frac{p}{2}} ds\right| \\
& \lesssim \frac{|x(y, \lambda)|}{(\lambda^2 + \langle x(y, \lambda) \rangle^{-\mu})^{\frac{p}{2}}} \lesssim \frac{|x(y, \lambda)|}{(\lambda^2 -V(x(y, \lambda)))^{\frac{p}{2}}}.
\end{align*}
The proof is completed.
\end{proof}

Now we apply the Liouville transform to construct oscillating solutions of (\ref{2509021638}) which are counterparts of the ordinary Jost functions for Schr\"odinger equations with short range potentials.

\begin{thm}\label{2509021637}
There exist solutions $u_{\pm}\in C(\R_x\times \R_{\l})\cap C^{\infty}(\R_x\times \R_{\l}\setminus \{0\})$ of
\begin{align}
-\partial^2 _{x} u_{\pm} (x, \lambda) +V(x)u_{\pm} (x, \lambda) = \lambda^2 u_{\pm} (x, \lambda). \label{2509021638}
\end{align}

\vspace{1mm}
\noindent$(i)$ There exist $\tilde{a}_{\s_1,\s_2}\in C(\R_x\times \R_{\l})\cap C^{\infty}(\R_x\times \R_{\l}\setminus \{0\})$ $(\s_1,\s_2\in \{\pm\})$ such that
\begin{align}\label{2509031817}
u_{\pm} (x, \lambda) =\frac{1}{(\l^2-V(x))^{\frac{1}{4}}} \left(\tilde{a}_{\pm, +} (x, \lambda) e^{iy(x,\l)} + \tilde{a}_{\pm, -} (x, \lambda) e^{-iy(x,\l)}\right)
\end{align}
and the limits $\displaystyle \tilde{a}_{\s_1,\s_2}(\pm\infty,\l):=\lim_{x\to \pm\infty}\tilde{a}_{\s_1,\s_2}(x,\l)$ exist uniformly in $\l\in \R$. Furthermore,
\begin{align}\label{eq:asymptildea}
\tilde{a}_{+,+}(\infty,\l)=\tilde{a}_{-,-}(-\infty,\l)=1,\quad \tilde{a}_{+,-}(\infty,\l)=\tilde{a}_{-,+}(-\infty,\l)=0
\end{align}
and in general, $\displaystyle \tilde{a}_{\s_1,\s_2}(\pm\infty,\l)=a_{\s_1,\s_2}(\pm\infty,\l)$, where $a_{\s_1,\s_2}$ is as in Theorem \ref{2509021616} for the choice of $W(y,\l)$ in \eqref{2510011536}.

\vspace{1mm}
\noindent$(ii)$ For $\a=0,1,2$,
\begin{align}\label{2509031822}
|\partial^{\a} _{\lambda} \tilde{a}_{\s_1,\s_2} (x, \lambda)| \lesssim |\lambda|^{-\a},\quad |\pa_x\tilde{a}_{\s_1,\s_2}(x,\l)|\lesssim \jap{x}^{\frac{\m}{2}-2}
\end{align}
uniformly in $x\in \R$ and $\l\in \R\setminus \{0\}$.

\vspace{2mm}
\noindent$(iii)$ 
We have
\begin{align}\label{261271003}
|\tilde{a}_{+,-}(x,\l)|\lesssim \min(\jap{x}^{\mu -2},|\l|^{-2}|x|^{-2}),\quad |\tilde{a}_{-, +} (x', \lambda)| \lesssim \min(\jap{x'}^{\mu -2},|\l|^{-2}|x'|^{-2})
\end{align}
uniformly in $\lambda \in \R\setminus \{0\}$, $x>0$ and $x' <0$.

\noindent$(iv)$ Let $\mathrm{Wr}(\lambda) := u_{+} (x, \lambda) \partial_{x} u_{-} (x, \lambda) - \partial_{x} u_{+} (x, \lambda) u_{-} (x, \lambda)$ be the Wronskian. Then $\mathrm{Wr} (\lambda)$ satisfies
\begin{align*}
|\mathrm{Wr}(\lambda)| \sim 1, \quad \left| \left(\frac{d}{d\lambda} \right)^{\alpha} \mathrm{Wr}(\lambda) \right| \lesssim |\lambda|^{-\alpha} \quad \text{for any $\alpha \in \N$}
\end{align*}
uniformly in $\lambda \in \R \setminus \{0\}$.

\end{thm}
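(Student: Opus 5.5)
The plan is to use the Liouville transform of Lemma \ref{2509041100} to reduce \eqref{2509021638} to the equation \eqref{2509021621} of Theorem \ref{2509021616}.

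\textbf{Liouville transform.} Set $y=y(x,\l)$ and $q(x,\l):=\l^2-V(x)>0$. Look for solutions of the form $u(x,\l)=q(x,\l)^{-1/4}v(y(x,\l),\l)$. A direct computation using $\pa_x y=q^{1/2}$ shows that $u$ solves \eqref{2509021638} if and only if $v$ solves $-\pa_y^2v+W v=v$, with
\begin{align}\label{2510011536}
W(y,\l)=\frac{1}{q^{3/4}}\,\pa_x^2\big(q^{-1/4}\big)\Big|_{x=x(y,\l)}=\left(\frac{-V''}{4q^2}-\frac{5(V')^2}{16q^3}\right)\Big|_{x=x(y,\l)}.
\end{align}
By Assumption \ref{assump:V}, we have $|W|\lesssim \jap{x}^{-2-\m}q^{-2}+\jap{x}^{-2-2\m}q^{-3}\lesssim \jap{x}^{\m-2}$. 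Moreover $q\gtrsim \l^2+\jap{x}^{-\m}$ and $|y|\sim \int_0^{|x|}(\l^2+\jap{s}^{-\m})^{1/2}ds\lesssim |x|\, q(x)^{1/2}$. Hence
\[
\jap{y}^{2}|W|\lesssim (1+x^2q)\jap{x}^{-2}q^{-1}\jap{x}^{-\m}\lesssim 1,
\]
where we used $q^{-1}\lesssim\jap{x}^{\m}$ for the term with $\jap{x}^{-2}$. This gives $|W|\lesssim \jap{y}^{-2}$. Similarly, $\pa_yW=q^{-1/2}\pa_xW$ produces one more factor $\jap{x}^{-1}q^{-1/2}\lesssim \jap{y}^{-1}$, so $|\pa_yW|\lesssim\jap{y}^{-3}$. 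The $\l$-derivatives are handled with Lemma \ref{2509031823} together with the Fa\`a di Bruno bounds already used there: $\pa_\l$ acting on $q^{-k}$ or on $V^{(j)}(x(y,\l))$ costs at most $q^{-1/2}\lesssim|\l|^{-1}$. This yields \eqref{2509021619}. Continuity of $W$ at $\l=0$ holds because everything is continuous there (this is where $V<0$ is essential). Taking $v_\pm$ and $a_{\s_1,\s_2}$ from Theorem \ref{2509021616}, we set $u_\pm:=q^{-1/4}v_\pm(y(x,\l),\l)$ and $\tilde a_{\s_1,\s_2}(x,\l):=a_{\s_1,\s_2}(y(x,\l),\l)$. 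Then \eqref{2509031817} holds. The limits in $(i)$ and \eqref{eq:asymptildea} follow from \eqref{eq:apmasym} and the uniform convergence, since $y(x,\l)\to\pm\infty$ uniformly in $\l$ as $x\to\pm\infty$ (because $|y|\gtrsim \jap{x}^{1-\m/2}$).

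\textbf{Estimates $(ii)$ and $(iii)$.} For $(ii)$, by the chain rule $\pa_\l\tilde a=(\pa_\l a)(y,\l)+(\pa_y a)\,\pa_\l y$. Here $|\pa_\l y|\le |\l|\int_0^{|x|}q^{-1/2}\lesssim |\l| |x|\jap{x}^{\m/2}$, and $|\pa_ya|\lesssim |W|\lesssim \jap{x}^{\m-2}(\l^2+\jap{x}^{-\m})^{-2}$. Combining these gives $|\pa_ya|\,|\pa_\l y|\lesssim |\l|^{-1}$ after checking the two regimes $\l^2\gtrless\jap{x}^{-\m}$. Second derivatives are treated similarly using $(ii)$ of Theorem \ref{2509021616} with $\b+\c\le2$. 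The $x$-derivative satisfies $|\pa_x\tilde a|\lesssim q^{1/2}|W|$; this is bounded by $\jap{x}^{\m/2-2}$ when $q\sim\jap{x}^{-\m}$, and the large-$\l$ case is better. For $(iii)$, Theorem \ref{2509021616}$(iii)$ gives $|\tilde a_{+,-}|\lesssim\jap{y}^{-2}$. If we only use that, the bound for large $\l$ is weak. Instead, I would redo the integration-by-parts argument of Theorem \ref{2508311359}$(iv)$: it actually gives $|a_{+,-}(y)|\lesssim |W(y)|+\int_y^\infty|\pa_sW|+O(\jap{y}^{-2})$. Here $|W|\lesssim\min(\jap{x}^{\m-2},\l^{-4}\jap{x}^{-2-\m})$, and $\jap{y}^{-2}\lesssim \min(\jap{x}^{\m-2},\l^{-2}x^{-2})$ since $|y|\gtrsim \max(\l|x|,\jap{x}^{1-\m/2})$. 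Together these give \eqref{261271003}.

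\textbf{Wronskian $(iv)$.} The Wronskian is independent of $x$. Evaluating it with the WKB form as $x\to\infty$ gives $\mathrm{Wr}(\l)=-2i\,\tilde a_{-,-}(\infty,\l)$ after the $q^{\pm1/4}$ factors cancel, since the Wronskian of $v_\pm$ in $y$ transfers to $x$ with factor $1$. Symbol bounds then come from Theorem \ref{2509021616}$(ii)$. The lower bound $|\mathrm{Wr}|\gtrsim1$ is what I expect to be the main obstacle: I need $v_+$ and $v_-$ to be independent uniformly in $\l$, including at $\l=0$. I would use reality of $W$, so that $\overline{v_+}$ also solves the equation. Conservation of the current $\mathrm{Im}(\bar v\,\pa_yv)$ along with the asymptotics then yields $|a_{-,-}(\infty)|^2-|a_{-,+}(\infty)|^2=1$, hence $|\mathrm{Wr}|\ge2$. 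The upper bound follows from uniform boundedness of $\textbf{w}_\pm$.
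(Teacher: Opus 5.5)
Your proposal is correct and follows essentially the paper's proof. The shared steps are the Liouville transform to Theorem~\ref{2509021616}, chain-rule bounds for $\tilde a_{\sigma_1,\sigma_2}(x,\lambda)=a_{\sigma_1,\sigma_2}(y(x,\lambda),\lambda)$ using $|y|\gtrsim\max(|\lambda||x|,\langle x\rangle^{1-\mu/2})$, and $\mathrm{Wr}(\lambda)=-2i\,a_{-,-}(\infty,\lambda)$ combined with conservation of $w\{u,\bar u\}$; your flux identity $|a_{-,-}(\infty,\lambda)|^2-|a_{-,+}(\infty,\lambda)|^2=1$ is the paper's $|\mathrm{Wr}(\lambda)|^2-|w\{u_+,\overline{u_-}\}(\lambda)|^2=4$ divided by $4$.

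There are two small slips. In (iii), $|a_{+,-}|\lesssim\langle y\rangle^{-2}$ is not weak for large $\lambda$: $y\geq|\lambda|x$ already gives $|\lambda|^{-2}x^{-2}$, as you observe yourself, so the integration-by-parts detour is unnecessary. In (ii), you need $|W|\lesssim\langle x\rangle^{-2-\mu}(\lambda^2+\langle x\rangle^{-\mu})^{-2}$ rather than $\langle x\rangle^{\mu-2}(\lambda^2+\langle x\rangle^{-\mu})^{-2}$; with the latter, your check fails in the regime $\lambda^2\le\langle x\rangle^{-\mu}$ when $\mu>2/5$.
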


\begin{remark}
The asymptotic properties \eqref{2509031817}, \eqref{eq:asymptildea} and the second estimate in \eqref{2509031822} imply
\begin{align}
&u_{\pm} (x, \lambda) = \frac{1}{(\lambda^2 - V(x))^{\frac{1}{4}}} \left(e^{\pm iy (x, \lambda)} + o(1) \right) \quad \text{as $x \to \pm\infty$}, \label{2509031759} \\
&\partial_{x} u_{\pm} (x, \lambda) = \pm i (\lambda^2 - V(x))^{\frac{1}{4}} \left(e^{\pm iy (x, \lambda)} + o(1) \right) \quad \text{as $x \to \pm\infty$} \label{2509031801}
\end{align}
uniformly in $\lambda \in \R \setminus \{0\}$. From this and the standard formula of the scattering matrix $S(\l)$ (\cite[p.213]{Y}), we have
\begin{align*}
S(\l)=\begin{pmatrix}
a_{+,+}(-\infty,\l)^{-1}&a_{-,-}(\infty,\l)^{-1}a_{-,+}(\infty,\l) \\
a_{+,+}(-\infty,\l)^{-1}a_{+,-}(-\infty,\l)&a_{-,-}(\infty,\l)^{-1}
\end{pmatrix}
\quad \text{for $\l>0$}.
\end{align*}
In particular, as $\lambda \to +0$, $S(\lambda)$ converges to a unitary matrix. Such a phenomenon has been observed in higher dimensions for homogeneous potentials (see \cite{DS2, Fra}) but our result implies the symmetry of $V$ is not essential in one dimension.

\end{remark}

\begin{remark}
This result is closely related to the construction of distorted Fourier transform \cite{HS}, which is also called the wave matrix or the generalized Fourier transform in scattering theory (see \cite[\S 6.2]{DS} and \cite[p.214]{Y}).
\end{remark}

We want to reduce the proof of Theorem \ref{2509021637} to Theorem \ref{2509021616}. To do this, we define
\begin{align}
&U(x,\lambda)=\frac{V''(x)}{4(\lambda^2-V(x))^2}-\frac{5(V'(x))^2}{16(\lambda^2-V(x))^{3}} ,\quad W(y,\lambda):=U(x(y,\l),\l), \label{2510011536}
\end{align}
where $x(y,\l)$ is the inverse function of the Liouville transform $y(x,\lambda)=\int_0^x\sqrt{\lambda^2-V(s)}ds$ introduced in Lemma \ref{2509041100}.

\begin{lemma}\label{lem:Wsymbol}
$W(y,\l)$ defined in \eqref{2510011536} satisfies \eqref{2509021619}. 
\end{lemma}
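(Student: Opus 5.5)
We have to show three bounds: $|W(y,\l)|\lesssim\jap{y}^{-2}$, $|\pa_\l^\a W(y,\l)|\lesssim|\l|^{-\a}\jap{y}^{-2}$, and $|\pa_yW(y,\l)|\lesssim\jap{y}^{-3}$, uniformly in $\l\neq0$. The plan is to estimate $U(x,\l)$ and its derivatives in the $x$-variable, and then transfer these bounds to $y$. For the transfer we use two facts: the relation between $y$ and $x(y,\l)$, and Lemma \ref{2509031823} for the $\l$-derivatives.

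\textbf{Pointwise bound.} Write $q:=\l^2-V(x)$. By Assumption \ref{assump:V}, $q\gtrsim \l^2+\jap{x}^{-\m}$, and also $|V''|\lesssim\jap{x}^{-\m-2}$ and $|V'|^2\lesssim\jap{x}^{-2\m-2}$. Using $q\ge q^{1/2}\jap{x}^{-\m/2}$ and $q^{3}\ge q\,\jap{x}^{-2\m}$, we get
\begin{align*}
|U(x,\l)|\lesssim \frac{\jap{x}^{\m-2}}{q}\lesssim \frac{\jap{x}^{\m-2}}{\l^2+\jap{x}^{-\m}}.
\end{align*}
We compare this with $\jap{y}^{-2}$. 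Since $q(s)\gtrsim\l^2+\jap{s}^{-\m}$ and $q(s)\lesssim \l^2+\jap{s}^{-\m}$ (by $|V|\lesssim\jap{x}^{-\m}$), the bound $|y(x,\l)|\lesssim \int_0^{|x|}(|\l|+\jap{s}^{-\m/2})ds\lesssim |x|(|\l|+\jap{x}^{-\m/2})$ holds, and here the exponent $\m/2<1$ is used. Hence $\jap{y}^2\lesssim \jap{x}^2(\l^2+\jap{x}^{-\m})$. Multiplying,
\begin{align*}
|U|\jap{y}^2\lesssim \jap{x}^{\m-2}\cdot\jap{x}^2=\jap{x}^\m.
\end{align*}
This is too weak, so we refine. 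The better estimate is $|U|\lesssim \jap{x}^{-2}q^{-1}\,\jap{x}^{-\m}$ in the region where $\l^2$ dominates; for general $\l$ we write $|U|\lesssim \frac{\jap{x}^{-\m-2}}{q^2}+\frac{\jap{x}^{-2\m-2}}{q^3}\lesssim \frac{\jap{x}^{-\m-2}}{q^2}$. Then
\begin{align*}
|U|\jap{y}^2\lesssim \frac{\jap{x}^{-\m}(\l^2+\jap{x}^{-\m})}{q^2}\lesssim \frac{\jap{x}^{-\m}}{q}\lesssim 1,
\end{align*}
which is the required bound.

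\textbf{$\l$-derivatives.} By the chain rule, $\pa_\l^\a W$ is a sum of terms $(\pa_\l^{k}\pa_x^{j}U)(x(y,\l),\l)\prod(\pa_\l^{\a_i}x)$. We use three ingredients.

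1. Each $\pa_x$ falling on $U$ costs a factor $\jap{x}^{-1}$, because $\pa_x q=-V'$ satisfies $|V'|/q\lesssim\jap{x}^{-1}$ and $V^{(j)}$ are symbols.
2. Each $\pa_\l$ falling on $q^{-k}$ costs a factor $|\l|/q\le |\l|^{-1}$.
3. By Lemma \ref{2509031823}, $|\pa_\l^{\a_i}x|\lesssim|\l|^{-\a_i}|x|$, so the factors $|x|$ cancel the factors $\jap{x}^{-1}$ from point 1.

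Together these give $|\l|^{-\a}$ times the previous bound. Fa\`a di Bruno bookkeeping, as in the proof of Lemma \ref{2509031823}, organizes the terms.

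\textbf{$y$-derivative.} We have $\pa_yW=(\pa_xU)\,q^{-1/2}$, and $|\pa_xU|\lesssim\jap{x}^{-1}\jap{x}^{-\m-2}q^{-2}$. It then suffices to show
\begin{align*}
\jap{x}^{-\m-3}q^{-5/2}\jap{y}^3\lesssim 1.
\end{align*}
Using $\jap{y}^3\lesssim\jap{x}^3q^{3/2}$ (from the $y$-bound above), this reduces to $\jap{x}^{-\m}q^{-1}\lesssim1$, which holds.

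The main obstacle is the comparison $\jap{y}\lesssim\jap{x}q(x)^{1/2}$ uniformly in $\l$, including the regime $|x|\lesssim1$, where $\jap{\cdot}$ absorbs everything. Once the crude bound is replaced by the $q^{-2}$ form, the powers balance exactly.
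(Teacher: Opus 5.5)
Your proposal is correct and follows essentially the paper's route: symbol-type bounds on $\pa_x^{j}\pa_\l^{k}U$, the chain rule combined with Lemma \ref{2509031823}, and the comparison $\jap{y}\lesssim\jap{x}\,(\l^2-V(x))^{1/2}$, which is exactly the paper's bound $|y|\lesssim\jap{x}\min(|\l|^{-1},\jap{x}^{\m/2})^{-1}$. Your discarded first estimate was miscomputed: it is really $|U|\lesssim\jap{x}^{-2}q^{-1}$, which would already have sufficed. This slip does not affect your final argument.
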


\begin{proof}
To prove that $W$ satisfies \eqref{2509021619}, we show
\begin{align}
|\partial^{\alpha} _{x} \partial^{\beta} _{\lambda} U(x, \lambda)| \lesssim \langle x \rangle^{-\alpha} (\lambda^2-V(x))^{-\frac{\b}{2}} \left\{ \frac{\langle x \rangle^{-2-\mu}}{(\lambda^2-V(x))^2} + \frac{\langle x \rangle^{-2-2\mu}}{(\lambda^2-V(x))^{3}}\right\} \label{2510171716}
\end{align}
for any $\alpha, \beta \in \N_0$. By an inductive argument, for any $\beta \in \N_0$,
\begin{align*}
\partial^{\beta} _{\lambda} U(x, \lambda) = V'' (x) \sum_{j-2k = -\beta} c_{j, k, \beta} \lambda^{j} (\lambda^2-V(x))^{-k-2} -(V'(x))^2 \sum_{j-2k=-\beta} C_{j, k, \beta} \lambda^{j} (\lambda^2-V(x))^{-k-3}
\end{align*}
holds, where $c_{j, k, \beta}, C_{j, k, \beta} \in \R$ and the sum is taken with respect to finite pairs of $(j, k) \in \N^2 _0$ satisfying $j-2k=-\beta$. Therefore
\begin{align*}
\partial^{\alpha} _{x} \partial^{\beta} _{\lambda} U(x, \lambda) &= \sum_{j-2k = -\beta} c_{j, k, \beta, \alpha, \gamma} \lambda^{j} (\partial^{\alpha -\gamma} _{x} V''(x)) \partial^{\gamma} _x (\lambda^2-V(x))^{-k-2} \\
&\quad + \sum_{j-2k=-\beta} C_{j, k, \beta, \alpha, \gamma} \lambda^{j} (\partial^{\alpha -\gamma} _{x} (V'(x))^2) \partial^{\gamma} _x (\lambda^2-V(x))^{-k-3},
\end{align*}
where $c_{j, k, \beta, \alpha, \gamma}, C_{j, k, \beta, \alpha, \gamma} \in \R$, $(j, k) \in \N^2 _0$ satisfies the above condition and $0 \le \gamma \le \alpha$. To estimate the right hand side, by an inductive argument, we have
\begin{align*}
&\partial^{\gamma} _x (\lambda^2-V(x))^{-k-2} = \sum_{\alpha_1 + \dots +\alpha_l =\gamma} C_{\alpha_1, \dots, \alpha_l} V^{(\alpha_1)} \dots V^{(\alpha_l)} (\lambda^2-V(x))^{-k-2-l}, \\
&|\partial^{\gamma} _x (\lambda^2-V(x))^{-k-2}| \lesssim \sum_{\alpha_1 + \dots +\alpha_l =\gamma} (\lambda^2-V(x))^{-k-2-l} \langle x \rangle^{-(\alpha_1 + \cdots + \alpha_l)-\mu l} \lesssim \langle x \rangle^{-\gamma} (\lambda^2-V(x))^{-k-2},
\end{align*}
where the sum is finite and $C_{\alpha_1, \dots, \alpha_l} \in \R$. Thus we obtain
\begin{align*}
|\lambda^{j} (\partial^{\alpha -\gamma} _{x} V''(x)) \partial^{\gamma} _x (\lambda^2-V(x))^{-k-2}| &\lesssim |\lambda|^j \langle x \rangle^{-(\alpha-\gamma)-2-\mu} \langle x \rangle^{-\gamma} (\lambda^2-V(x))^{-k-2} \\
&\lesssim \langle x \rangle^{-\alpha} \frac{\langle x \rangle^{-2-\mu}}{(\lambda^2-V(x))^{\frac{\b}{2}+2}}
\end{align*}
if $j-2k=-\beta$ holds. For such $(j, k)$, a similar calculation yields
\begin{align*}
|\lambda^{j} (\partial^{\alpha -\gamma} _{x} (V'(x))^2) \partial^{\gamma} _x (\lambda^2-V(x))^{-k-3}| \lesssim   \langle x \rangle^{-\alpha} \frac{\langle x \rangle^{-2-2\mu}}{(\lambda^2-V(x))^{\frac{\b}{2}+3}}
\end{align*}
and \eqref{2510171716} follows from these observations. 

Now we estimate $\partial^{j} _{\lambda} W(y, \lambda)$. By an inductive argument, we have
\begin{align*}
\partial^{j} _{\lambda} W(y, \lambda) = \sum_{\beta + \alpha_1 +\cdots + \alpha_l =j} c_{\alpha, \beta} (\partial^{l} _x \partial^{\beta} _{\lambda} U)(x(y, \lambda), \lambda)\left(\prod_{k=1}^{l} \partial^{\alpha_k} _{\lambda} x(y, \lambda)\right)
\end{align*}
for any $j \in \N_0$, where the sum is finite and $c_{\alpha, \beta} \in \R$. By \eqref{2510101753} and \eqref{2510171716}, we obtain
\begin{align}
&\left| (\partial^{l} _x \partial^{\beta} _{\lambda} U)(x(y, \lambda), \lambda)\left(\prod_{k=1}^{l} \partial^{\alpha_k} _{\lambda} x(y, \lambda)\right)\right| \notag \\
&\lesssim \langle x(y, \lambda) \rangle^{-l} \left\{ \frac{\langle x(y, \lambda) \rangle^{-2-\mu}}{(\lambda^2-V(x(y, \lambda)))^{\frac{\b}{2}+2}} + \frac{\langle x(y, \lambda) \rangle^{-2-2\mu}}{(\lambda^2-V(x(y, \lambda)))^{\frac{\b}{2}+3}}\right\} \left(\prod_{k=1}^{l}\frac{|x(y,\l)|}{(\l^2-V(x(y,\l)))^{\frac{\a_k}{2}}}\right) \notag \\
&\lesssim (\l^2-V(x(y,\l)))^{-\frac{j}{2}} \left\{ \frac{\langle x(y, \lambda) \rangle^{-2-\mu}}{(\lambda^2-V(x(y, \lambda)))^2} + \frac{\jap{x(y, \lambda)}^{-2-2\mu}}{(\lambda^2-V(x(y, \lambda)))^{3}}\right\} \label{2510182144}
\end{align}
if $\beta + \alpha_1 +\cdots + \alpha_l =j$. Since $(\l^2-V(x))^{-1}\lesssim \min(|\l|^{-1},\jap{x}^{\frac{\m}{2}})$, we have
\begin{align*}
 \frac{\langle x(y, \lambda) \rangle^{-2-\mu}}{(\lambda^2-V(x(y, \lambda)))^2} + \frac{\jap{x(y, \lambda)}^{-2-2\mu}}{(\lambda^2-V(x(y, \lambda)))^{3}}
\lesssim&\jap{x}^{-\m-2}\min(|\l|^{-1},\jap{x}^{\frac{\m}{2}})^4+\jap{x}^{-2\m-2}\min(|\l|^{-1},\jap{x}^{\frac{\m}{2}})^6\\
\lesssim& \jap{x}^{-2}\min(|\l|^{-1},\jap{x}^{\frac{\m}{2}})^2.
\end{align*}
Moroever,
\begin{align*}
|y(x,\l)|\leq \left| \int_{0}^{x} \sqrt{\lambda^2 -V(s)} ds\right|\lesssim\int_0^x(|\l|+|V(s)|^{\frac{1}{2}})ds\lesssim \l\jap{x}+\jap{x}^{1-\frac{\m}{2}}
\lesssim \jap{x}\left(\min(|\l|^{-1},\jap{x}^{\frac{\m}{2}})\right)^{-1}.
\end{align*}
Combining them, we have $(\text{RHS of \eqref{2510182144}})\lesssim  (\l^2-V(x(y,\l)))^{-\frac{j}{2}}\jap{y}^{-2}$ and hence
\begin{align*}
|\pa_{\l}^jW(y,\l)|\lesssim (\l^2-V(x(y,\l)))^{-\frac{j}{2}}\jap{y}^{-2}\lesssim |\l|^{-j}\jap{y}^{-2}.
\end{align*}

Concerning $\partial_y W(y, \lambda)$, by (\ref{2510171716}) and $\partial_y x = (\l^2-V(x(y,\l)))^{-\frac{1}{2}}$, we have
\begin{align*}
|\partial_y W(y, \lambda)|  =& \left|(\partial_x U)(x(y, \lambda), \lambda) \frac{\partial x}{\partial y} (y, \lambda) \right| \lesssim \left\{ \frac{\langle x(y, \lambda) \rangle^{-3-\mu}}{(\lambda^2-V(x(y, \lambda)))^{\frac{5}{2}}} + \frac{\langle x(y, \lambda) \rangle^{-3-2\mu}}{(\lambda^2-V(x(y, \lambda)))^{\frac{7}{2}}}\right\}\\
\lesssim& \jap{x(y, \lambda)}^{-3-\mu}\min(|\l|^{-1},\jap{x}^{\frac{\m}{2}})^5+\jap{x(y, \lambda)}^{-3-2\mu}\min(|\l|^{-1},\jap{x}^{\frac{\m}{2}})^7\lesssim\jap{y}^{-3}
\end{align*}
similarly as above. This completes the proof.
\end{proof}

Now we turn to prove Theorem \ref{2509021637}.

\begin{proof}[Proof of Theorem \ref{2509021637}]
We use the Liouville transform $y(x,\lambda)=\int_0^x\sqrt{\lambda^2-V(s)}ds$ introduced in Lemma \ref{2509041100} and define $W(y,\l)$ as \eqref{2510011536} and
\begin{align}\label{eq:vdefviau}
v(y, \lambda)=\mu (x)^{-1}u(x)|_{x=x(y,\lambda)},\quad  \mu (x):=(\lambda^2-V(x))^{-\frac{1}{4}},
\end{align}
Then, the equation $-\partial_x^2u(x,\lambda)+V(x)u(x,\lambda)=\lambda^2u(x,\lambda)$ is equivalent to
\begin{align*}
(-\partial_y^2+W(y,\lambda))v(y,\lambda)=v(y,\lambda).
\end{align*}
In fact, since $\frac{dy}{dx}=\sqrt{\lambda^2-V(x)}$ and $\frac{dx}{dy}=(\lambda^2-V(x))^{-\frac{1}{2}}=\mu(x)^2$, we have
\begin{align*}
\frac{d}{dy}v=&\frac{dx}{dy}\frac{d}{dx}\left(\mu(x)^{-1}u(x) \right)|_{x=x(y,\lambda)}=(-\mu'(x)u(x)+\mu(x)u'(x))|_{x=x(y,\lambda)},\\
\frac{d^2}{dy^2}v=&\frac{dx}{dy}\frac{d}{dx}\left(-\mu'(x)u(x)+\mu(x)u'(x)) \right)|_{x=x(y,\lambda)}
\underbrace{=}_{\mathclap{u''=-(\lambda^2-V)u=-\mu^{-4}u}}(-\mu(x)^2\mu''(x)u(x)-\mu(x)^{-1}u(x))|_{x=x(y,\lambda)}\\
\underbrace{=}_{u=\mu v}&(-\mu(x)^3\mu''(x)-1)|_{x=x(y,\lambda)}v(y),
\end{align*}
and $-\mu(x)^3\mu''(x)|_{x=x(y,\lambda)}=W(y,\lambda)$.
These calculation imply $(-\partial_y^2+W(y, \lambda))v(y, \lambda)=v(y, \lambda)$. 
By virtue of Lemma \ref{lem:Wsymbol}, we can apply Theorem \ref{2509021616} and take $a_{\s_1,\s_2}$ $(\s_1,\s_2\in \{\pm\})$ as in Theorem \ref{2509021616}. Define
\begin{align*}
\tilde{a}_{\s_1,\s_2}(x,\l):=a_{\s_1,\s_1}(y(x,\l),\l).
\end{align*}
Then, Theorem \ref{2509021616} $(i)$ proves the part $(i)$.

\vspace{1mm}
\noindent$(ii)$ It is easy to see that
\begin{align*}
|\partial^{\alpha} _{\lambda} y(x, \lambda)| \lesssim \left| \int_{0}^{x} \partial^{\alpha} _{\lambda} \sqrt{\lambda^2 -V(s)} ds\right| \lesssim |\lambda|^{1-\alpha} |x|
\end{align*}
for any $\alpha \in \N$. Therefore
\begin{align*}
|\partial^{\beta} _{\lambda} \tilde{a}_{\s_1, \s_2} (x, \lambda)| &\lesssim \sum_{\alpha_1 + \cdots +\alpha_{\delta} +\gamma =\beta} \left|(\partial^{\gamma} _{\lambda} \partial^{\delta} _y a_{\s_1, \s_2} )(y(x, \lambda), \lambda)\prod_{j=1}^{\delta} \partial^{\alpha_j} _{\lambda} y(x, \lambda) \right| \\
&\lesssim\sum_{\alpha_1 + \cdots +\alpha_{\delta} +\gamma =\beta} |\lambda|^{-\gamma} \langle y(x, \lambda) \rangle^{-\delta}\prod_{j=1}^{\delta} |\lambda|^{1-\alpha_j} |x|\\
&\lesssim \sum_{\alpha_1 + \cdots +\alpha_{\delta} +\gamma =\beta} |\lambda|^{-\gamma} (|\lambda||x|)^{-\delta}\prod_{j=1}^{\delta} |\lambda|^{1-\alpha_j} |x|  \lesssim |\lambda|^{-\beta}
\end{align*}
holds for any $\beta \in \N$, where we have used Theorem \ref{2509021616} $(ii)$ and $|y(x, \lambda)| \ge \left| \int_{0}^{x} |\lambda| ds\right|=|\lambda||x|$.

To prove the bound of $\pa_x\tilde{a}_{\s_1,\s_2}$, we use the second statement of Theorem \ref{2509021616}: $|\pa_ya_{\s_1,\s_2}(y,\l)|\lesssim |W(y,\l)|$. By Lebniz's rule and $\pa_xy(x,\l)=\sqrt{\l^2-V(x)}$, we obtain
\begin{align*}
|\pa_x\tilde{a}_{\s_1,\s_2}(x,\l)|=&|\sqrt{\l^2-V(x)}(\pa_ya_{\s_1,\s_2})(y(x,\l),\l)|\lesssim |\sqrt{\l^2-V(x)}W(y(x,\l))|\\
\lesssim&(\l^2-V(x))^{-\frac{3}{2}}|V''(x)|+(\l^2-V(x))^{-\frac{5}{2}}|V'(x)|^2\lesssim\jap{x}^{\frac{\m}{2}-2},
\end{align*}
where we recall $W(y(x,\l))=U(x)$ from \eqref{2510011536}.

\vspace{1mm}
\noindent$(iii)$ By Theorem \ref{2509021616} $(iii)$, we have $|a_{+,-}(y,\lambda)|\lesssim \jap{y}^{-2}$ for $y\geq 0$.
Since $y(x,\l)\gtrsim \max(|x|\jap{x}^{-\frac{\m}{2}},\l|x|)$, we obtain
\begin{align*}
|\tilde{a}_{+,-}(x,\l)|=|a_{+, -} (y(x, \lambda), \lambda)| \lesssim \jap{y(x, \lambda)}^{-2} \lesssim \min(\jap{x}^{\mu -2},|\l|^{-2}|x|^{-2})
\end{align*}
for $x>0$. The estimate for $|\tilde{a}_{-,+}(x,\l)|$ is similarly proved.

\vspace{1mm}
\noindent$(iv)$ For solutions $f(\cdot, \lambda), g(\cdot, \lambda) \in C^1 (\R)$ of (\ref{2509021638}), we define the Wronskian by $w\{f, g\} (\lambda) =f(x, \lambda) \partial_x g(x, \lambda) - \partial_x f(x, \lambda)g(x, \lambda)$, which is constant with respect to $x \in \R$. In particular, $\mathrm{Wr}(\lambda)=w\{u_+, u_-\} (\lambda)$ is also constant.

 By \eqref{2509031759} and \eqref{2509031801}, we have
\begin{align*}
w \{u_+, \overline{u_+}\} (\lambda) &= u_+ (x, \lambda) \partial_{x}  \overline{u_+} (x, \lambda) -\partial_x  u_+ (x, \lambda)  \overline{u_+} (x, \lambda) \\
&= (e^{iy(x, \lambda)} + o(1)) \cdot (-i)(e^{-iy(x, \lambda)} + o(1)) -i(e^{iy(x, \lambda)} +o(1)) \cdot (e^{-iy(x, \lambda)} +o(1)) \\
&=-2i + o(1)
\end{align*}
as $x \to \infty$. Hence $w \{u_+, \overline{u_+}\} (\lambda) =-2i \neq 0$ and $u_+,  \overline{u_+}$ are linearly independent solutions of $(-\partial^2 _x +V(x)-\lambda^2)u=0$. Similarly we obtain $w \{u_-, \overline{u_-}\} (\lambda) = 2i$ and $u_-, \overline{u_-}$ are linearly independent. Therefore there exist $a(\lambda), b(\lambda), c(\lambda), d(\lambda) \in \C$ such that
\begin{align}\label{eq:Jostrelation}
u_+ (x, \lambda)= a(\lambda) u_- (x, \lambda) + b(\lambda) \overline{u_-} (x, \lambda), \quad  u_- (x, \lambda)= c(\lambda) u_+ (x, \lambda)+ d(\lambda) \overline{u_+} (x, \lambda),
\end{align}
which imply
\begin{align*}
u_+(x,\l)=(a(\l)c(\l)+b(\l)\overline{d(\l)})u_+(x,\l)+(a(\l)d(\l)+b(\l)\overline{c(\l)}))\overline{u_+}(x,\l)
\end{align*}
By computing $w\{u_+, u_-\} (\lambda)$ and $w\{u_+, \overline{u_-}\} (\lambda)$, we obtain
\begin{align}\label{eq:Jostcoeff}
a(\lambda) =-\frac{i}{2} w\{u_+, \overline{u_-}\} (\lambda),\,\, b(\lambda)= \frac{i}{2}\mathrm{Wr}(\lambda),\,\, c(\lambda)=-\overline{a(\l)},\,\, d(\lambda)=b(\l)
\end{align}
where we have used $w \{f, g\} = -w\{g, f\}$ and $w\{u_+, u_-\}=\mathrm{Wr}(\lambda)$. In particular,
\begin{align}\label{eq:Jostcoeffrel}
a(\l)c(\l)+b(\l)\overline{d(\l)}=|a(\l)|^2-|b(\l)|^2,\quad a(\l)d(\l)+b(\l)\overline{c(\l)}=0.
\end{align}
Since $u_+ \neq 0$, we obtain $|a(\l)|^2-|b(\l)|^2=1$ and $ |\mathrm{Wr}(\lambda)|^2 - |w\{u_+, \overline{u_-}\} (\lambda)|^2 =4$. Especially, $|\mathrm{Wr}(\lambda)| \ge 2$. 

Finally, we prove the symbolic estimates of $\mathrm{Wr}(\lambda)$. To see this, it suffices to show $\mathrm{Wr}(\lambda) =-2i a_{-, -} (\infty, \lambda)$, which immediately yields them by the second inequality of Theorem \ref{2509021616} $(ii)$.
Using $\partial_x y(x, \lambda) = \sqrt{\lambda^2 -V(x)}$, we obtain
\begin{align*}
\partial_{x} u_{\pm} (x, \lambda) = i(\lambda^2 -V(x))^{\frac{1}{4}} (\tilde{a}_{\pm, +}(x,\l)e^{iy(x, \lambda)} -\tilde{a}_{\pm, -}(x,\l)e^{-iy(x, \lambda)} + o(1))
\end{align*}
as $x \to \infty$. Therefore
\begin{align*}
&u_+ (x, \lambda) \partial_{x} u_{-} (x, \lambda) =i\tilde{a}_{+,+} (x, \lambda)e^{iy(x, \lambda)} (\tilde{a}_{-, +} (x, \lambda)e^{iy(x, \lambda)} -\tilde{a}_{-, -} (x, \lambda)e^{-iy(x, \lambda)}) + o(1)\\
&u_- (x, \lambda)\partial_{x} u_{+} (x, \lambda) = (\tilde{a}_{-, +} (x, \lambda)e^{iy(x, \lambda)} +\tilde{a}_{-, -} (x, \lambda)e^{-iy(x, \lambda)})(i\tilde{a}_{+,+} (x, \lambda)e^{iy(x, \lambda)}) + o(1)
\end{align*}
as $x \to \infty$, where we have used $a_{+, -} (y, \lambda) \to 0$ as $y \to \infty$. Thus we obtain $\mathrm{Wr}(\lambda) =-2i \tilde{a}_{+, +} (\infty, \lambda) \tilde{a}_{-, -} (\infty, \lambda)$. As is proved in $(i)$, $ \tilde{a}_{+, +} (\infty, \lambda)=1$ and $\tilde{a}_{-, -} (\infty, \lambda)=a_{-,-}(\infty,\l)$. Thus the claim follows.
\end{proof}

\begin{remark}\label{2510182152}
In the proof of Theorem \ref{2509021637} $(ii)$, the crucial step is the third estimate in Theorem \ref{2509021616} $(ii)$. However it may not hold if $\a \ge 3$. This is because the matrix factor 
\begin{align*}
\begin{pmatrix}
\sin y \cos y & \sin^2 y \\
-\cos^2 y & -\sin y \cos y
\end{pmatrix}
\end{align*}
appearing in the definition of $B(y, \lambda)$ does not have symbolic decay when it is differentiated. There remains the possibility to obtain \eqref{2509031822} using some cancellation but we do not pursue this here because Theorem \ref{2509021637} $(ii)$ is sufficient to show the dispersive estimates.
\end{remark}

\subsection{WKB expressions of the outgoing resolvent and the spectral projection}\label{2510231110}
In this subsection we give a WKB expression of the spectral projection which is used to write the Schr\"odinger propagator as an oscillatory integral operator. First we prove that the outgoing resolvent of the Schr\"odinger operator $P=-\partial^2 _x +V$ is written using the Jost functions. This is an analogue of \cite[Chapters 4 and 5]{Y}, where the Schr\"odinger operator with short range potentials is considered. Note that the existence of the outgoing resolvent or the limiting absorption principle is already known \cite[Chapter 11]{Y}.
\begin{lemma}\label{2508241545}
The integral kernel of the outgoing resolvent $R(\lambda^2 +i0) (x, x')$ is given by
\begin{align}
R(\lambda^2 +i0) (x, x') = \left\{\begin{aligned}
&\frac{1}{\mathrm{Wr}(\lambda)}u_-(x,\lambda)u_+(x',\lambda)&&x<x', \\
&\frac{1}{\mathrm{Wr}(\lambda)}u_+(x,\lambda)u_-(x',\lambda)&&x>x' \label{2512101147}
\end{aligned}\right.
\end{align}
for any $\lambda >0$, where $u_{\pm}$ and $\mathrm{Wr}(\l)$ are given in Theorem \ref{2509021637}.
\end{lemma}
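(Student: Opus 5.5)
The plan is the classical Green's function construction for Sturm--Liouville operators, and then an identification with the limit $R(\lambda^2+i\varepsilon)$ as $\varepsilon\downarrow 0$.

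First, fix $\lambda>0$ and let $G_\lambda(x,x')$ be the right-hand side of \eqref{2512101147}. By Theorem \ref{2509021637} $(iv)$ we have $\mathrm{Wr}(\lambda)\neq 0$, so $G_\lambda$ is well defined and continuous across $x=x'$. Its $x$-derivative jumps by $-1$ at $x=x'$, since $u_+u_-'-u_+'u_-=\mathrm{Wr}(\lambda)$. Hence $(-\partial_x^2+V-\lambda^2)G_\lambda(\cdot,x')=\delta_{x'}$ in the distributional sense.

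Second, identify the boundary behavior. By \eqref{2509031759}, $u_\pm(x,\lambda)\sim (\lambda^2-V)^{-1/4}e^{\pm iy(x,\lambda)}$ as $x\to\pm\infty$, with $y(x,\lambda)\approx \pm\lambda|x|$ up to a sublinear correction. Thus $G_\lambda$ satisfies the outgoing radiation condition at both ends. To pass from this to the resolvent I would argue by approximation. For $\varepsilon>0$ set $z=\lambda^2+i\varepsilon$ and choose $k$ with $k^2=z$ and $\operatorname{Im} k>0$.

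Third, construct solutions $u_\pm(\cdot,k)$ of $-u''+Vu=zu$ that decay exponentially at $\pm\infty$. The cleanest way is to repeat the Liouville-transform/Volterra argument of Theorems \ref{2508311359}--\ref{2509021637} with $\lambda$ replaced by $k$ near the real axis. The phase $\int_0^x\sqrt{k^2-V}\,ds$ then has positive imaginary part growing linearly, so the Volterra equation \eqref{eq:winteq} still converges, with the decaying exponential as the dominant weight. One also has to check that these solutions depend continuously on $k$ up to $\operatorname{Im}k=0$ and converge, locally uniformly in $x$, to $u_\pm(x,\lambda)$. The Wronskian converges to $\mathrm{Wr}(\lambda)$ as well.

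Alternatively, one can avoid the complex construction. The Weyl solution $\psi_+(\cdot,z)\in L^2(0,\infty)$ is unique up to a constant, since $P$ is limit point at $\pm\infty$ (Assumption \ref{assump:V} gives bounded $V$). It then suffices to show that $\psi_+(\cdot,\lambda^2+i\varepsilon)$, suitably normalized, converges to $u_+(\cdot,\lambda)$ rather than to $\overline{u_+}$. This holds because $\overline{u_+}$ is the incoming solution, and the incoming solution cannot be a limit of $L^2$ solutions with $\operatorname{Im}z>0$. One way to see this is through the sign of the current $\operatorname{Im}(\bar\psi\psi')$, which is monotone in $x$ for $\operatorname{Im}z>0$.

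Fourth, for $\operatorname{Im}z>0$ the standard Sturm--Liouville theory gives $R(z)(x,x')=\psi_-(x_<,z)\psi_+(x_>,z)/w\{\psi_+,\psi_-\}$. This is the kernel of a bounded operator on $L^2$, by self-adjointness and the limit point property. Letting $\varepsilon\downarrow 0$ gives pointwise convergence of the kernel to $G_\lambda$. The limiting absorption principle \cite[Chapter 11]{Y} says that $R(\lambda^2+i\varepsilon)\to R(\lambda^2+i0)$ in weighted spaces, hence in distributions. The two limits must therefore coincide, which proves \eqref{2512101147}.

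The main obstacle is step three: making sure the limit of the decaying solutions is $u_+$ and not a mixture with $\overline{u_+}$. I would settle this with the current argument, which is $\lambda$-by-$\lambda$ and needs no uniformity, combined with the locally uniform convergence of solutions of ODEs in the parameter $z$. The normalization constants cancel in the ratio with the Wronskian.
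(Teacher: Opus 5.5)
Your steps one, two and four are fine. The gap is in step three, at exactly the point you flag as the main obstacle, and the current argument you choose does not close it.

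\emph{Existence of the limit.} This does not follow from continuous dependence of ODE solutions on $z$. If you normalize $\psi_+(0,z)=1$, the initial slope is the Weyl function $m_+(z)$, and convergence of $m_+(\lambda^2+i\varepsilon)$ for \emph{every} $\lambda>0$ is itself part of what must be proved. It can be extracted from the limiting absorption principle by testing against $f$ supported to the left of a point, but you do not do this.

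\emph{Identification of the limit.} This is the more serious problem: the current cannot identify the limit. Suppose $\psi_+(\cdot,\lambda^2+i\varepsilon)\to g=\alpha u_++\beta\overline{u_+}$. The identity $J_\varepsilon(x)=\mathrm{Im}(\overline{\psi}\psi')=\varepsilon\int_x^\infty|\psi|^2\,ds>0$ only passes to $J_g\ge 0$. The cross terms $2\,\mathrm{Re}(\alpha\bar\beta u_+u_+')$ are real, and $w\{u_+,\overline{u_+}\}=-2i$ gives $\mathrm{Im}(\overline{u_+}u_+')=1$. Hence $J_g=|\alpha|^2-|\beta|^2$, and the sign of the current yields only $|\alpha|\ge|\beta|$. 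The dichotomy ``$u_+$ or $\overline{u_+}$'' is false: every mixture with $|\alpha|\ge|\beta|$ is allowed.

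No argument of this structural kind can work. Make $V$ more negative by a bump on $[L,L+1]$. On $[0,L]$ the equation is unchanged, yet the true boundary value of the new Weyl solution there is a mixture with $\beta\neq0$ (reflection off the bump), and it still has positive current. What forces $\beta=0$ is asymptotic information on $\psi_+(\cdot,\lambda^2+i\varepsilon)$ over the whole half-line, uniform as $\varepsilon\downarrow0$.

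\emph{How to repair it.} You would have to actually carry out your first option. Construct, for $\mathrm{Im}\,z\ge0$ near $\lambda^2$, the solution $(z-V)^{-1/4}e^{i\int_0^x\sqrt{z-V(s)}\,ds}(1+o(1))$, with bounds uniform in $\varepsilon$ and continuity at $\varepsilon=0$. Contrary to what you write, \eqref{eq:winteq} does not survive this. For complex $y$ the matrix $B$ contains $\sin^2 y$ and $\cos^2 y$, which grow like $e^{2\,\mathrm{Im}\,y}$, so $\int\|B\|\,dy=\infty$. You need instead a Volterra equation for $e^{-iy}u$ whose kernel $(e^{2i(y(s)-y(x))}-1)/(2i)$ is bounded for $s\ge x$, which holds because $\mathrm{Im}\sqrt{z-V}>0$.

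\emph{What the paper does.} The paper avoids complex energies altogether. It defines $T$ by your kernel $G_\lambda$ and checks $(P-\lambda^2)Tf=f$ with $Tf\in H^2_{\mathrm{loc}}$. It then verifies $(\partial_r-i\lambda)Tf=o(1)$ as $|x|\to\infty$, using \eqref{2509031759}, \eqref{2509031801} and $\sqrt{\lambda^2-V}-\lambda\to0$. Finally it invokes the uniqueness theorem for the Sommerfeld radiation condition from Yafaev's book. Your step two already contains that verification. In the paper, the outgoing character of $R(\lambda^2+i0)$ comes from the cited uniqueness theorem, not from an $\varepsilon\downarrow0$ limit.
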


\begin{proof}
We fix $\lambda \in \R_{>0}$ and define an integral operator $T$ by
\begin{align*}
Tf(x):= \frac{1}{\mathrm{Wr}(\lambda)}u_- (x, \lambda) \int_{x}^{\infty} u_+ (y, \lambda) f(y)dy + \frac{1}{\mathrm{Wr}(\lambda)}u_+ (x, \lambda)\int_{-\infty}^{x} u_- (y, \lambda) f(y)dy
\end{align*}
for $f \in C^{\infty} _c (\R)$. The integrals on the right hand side is well-defined by Theorem \ref{2509021637}. Then a direct computation shows
\begin{align*}
&\partial_x Tf(x) = \frac{1}{\mathrm{Wr}(\lambda)}\partial_x u_- (x, \lambda)\int_{x}^{\infty} u_+ (y, \lambda) f(y)dy + \frac{1}{\mathrm{Wr}(\lambda)}\partial_x u_+ (x, \lambda)\int_{-\infty}^{x} u_- (y, \lambda)f(y)dy, \\
&\partial^2 _x Tf(x) = \frac{\partial^2 _x u_- (x, \lambda)}{\mathrm{Wr}(\lambda)} \int_{x}^{\infty} u_+ (y, \lambda) f(y)dy +\frac{\partial^2 _x u_+ (x, \lambda)}{\mathrm{Wr}(\lambda)} \int_{-\infty}^{x} u_- (y, \lambda)f(y)dy -f(x),
\end{align*}
where we have used $\mathrm{Wr}(\lambda) = u_{+} (x, \lambda) \partial_{x} u_{-} (x, \lambda) - \partial_{x} u_{x} (x, \lambda) u_{-} (x, \lambda)$. In particular, Theorem \ref{2509021637} yields $Tf \in H^2 _{\loc} (\R)$. By using $(-\partial^2 _x +V(x)-\lambda^2)u_{\pm} =0$, we obtain $(-\partial^2 _x +V(x)-\lambda^2)Tf(x) =f(x)$. 

Next we see that $Tf$ satisfies the outgoing radiation condition. Using $\frac{\partial r}{\partial x} = \sgn x$ for $r:=|x|$, we obtain
\begin{align*}
(\partial_r -i\lambda)Tf(x) = (\sgn x \hspace{1mm}\partial_x -i\lambda)Tf(x) &=  \frac{(\sgn x \hspace{1mm}\partial_x u_{-} (x, \lambda) -i\lambda u_{-} (x, \lambda))}{\mathrm{Wr}(\lambda)} \int_{x}^{\infty} u_+ (y, \lambda) f(y)dy \\
&\quad + \frac{(\sgn x \hspace{1mm}\partial_x u_{+} (x, \lambda) -i\lambda u_{+} (x, \lambda))}{\mathrm{Wr}(\lambda)} \int_{-\infty}^{x} u_- (y, \lambda) f(y)dy.
\end{align*}
Since $u_+ (\cdot, \lambda)f(\cdot) \in L^1 (\R)$, we have $\int_{x}^{\infty} u_+ (y, \lambda) f(y)dy \to 0$ as $x \to \infty$. Using the boundedness of $\partial_x u_{-} (\cdot, \lambda)$ and $u_{-} (\cdot, \lambda)$ in Theorem \ref{2509021637}, the first term vanishes as $x \to \infty$. On the other hand, as $x \to -\infty$, we know
\begin{align*}
\sgn x \hspace{1mm}\partial_x u_{-} (x, \lambda) -i\lambda u_{-} (x, \lambda) = i(\lambda^2 -V(x))^{-\frac{1}{4}} e^{-iy(x, \lambda)} \{\sqrt{\lambda^2 -V(x)} -\lambda\} \to 0,
\end{align*}
where we have used (\ref{2509031759}) and (\ref{2509031801}). Using $u_+ (\cdot, \lambda)f(\cdot) \in L^1 (\R)$, the first term goes to $0$ as $x \to -\infty$. By the symmetry, the second term also vanishes as $|x| \to \infty$ and we obtain $(\partial_r -i\lambda)Tf(x) = o(1)$ as $|x| \to \infty$. By Sommerfeld's radiation condition \cite[p.407]{Y},
\begin{align*}
Tf(x) = R(\lambda^2 +i0)f(x)
\end{align*}
holds and the claim follows.
\end{proof}

Now we prove Theorem \ref{2509041030}.

\begin{proof}[Proof of Theorem \ref{2509041030}]
First we show
\begin{align}\label{eq:spprojJost}
\tilde{E} (\lambda, x, x')=\frac{\lambda}{\pi i } \left\{\mathrm{Wr}(\lambda)^{-1}u_+ (x, \lambda)u_{-} (x', \lambda) -\overline{\mathrm{Wr} (\lambda)}^{-1}\overline{u_+ (x, \lambda)} \overline{u_{-} (x', \lambda)} \right\}.
\end{align}
Using the identity \eqref{2512101147} and $R(\lambda^2 -i0) (x, x') = \overline{R(\lambda^2 +i0) (x', x)}$, we have \eqref{eq:spprojJost} for $x>x'$ and
\begin{align}\label{eq:spprojJost2}
\tilde{E} (\lambda, x, x') = \frac{\lambda}{\pi i} \left\{\mathrm{Wr}(\lambda)^{-1} u_- (x, \lambda)u_+ (x', \lambda)  -\overline{\mathrm{Wr} (\lambda)}^{-1}\overline{u_- (x, \lambda)} \overline{u_+ (x', \lambda)} \right\}
\end{align}
for $x<x'$.
By \eqref{eq:Jostrelation} and \eqref{eq:Jostcoeff}, for $x<x'$,
\begin{align*}
\frac{\pi i}{\l}\tilde{E} (\lambda, x, x')=&\frac{i}{2}b(\l)^{-1}(-\overline{a(\l)}u_+(x,\l)+b(\l)\overline{u_+(x,\l)})(a(\l)u_-(x',\l)+b(\l)\overline{u_-(x',\l)})\\
&+\frac{i}{2}\overline{b(\l)}^{-1}(-a(\l)\overline{u_+(x,\l)}+\overline{b(\l)}u_+(x,\l))(\overline{a(\l)}\cdot\overline{u_-(x',\l)}+\overline{b(\l)}u_-(x',\l))\\
=&\frac{i}{2}b(\l)^{-1}(u_+(x,\l)u_-(x',\l)+\overline{u_+(x,\l)u_-(x',\l)})=\frac{\pi i}{\l}(\text{RHS of \eqref{eq:spprojJost}}),
\end{align*}
where we use $|a(\l)|^2-|b(\l)|^2=1$ as is proved in just after \eqref{eq:Jostcoeffrel}.

We recall $\tilde{a}_{\s_1,\s_2}$ and $\mathrm{Wr}(\l)$ from Theorem \ref{2509021637}.
By setting
\begin{align*}
&\tilde{b}_{+, +} (\lambda, x, x') = -i\pi^{-1}\mathrm{Wr} (\lambda)^{-1}\tilde{a}_{+, +} (x, \lambda) \tilde{a}_{-, +} (x', \lambda)+ i\pi^{-1} \overline{\mathrm{Wr} (\lambda)}^{-1} \overline{\tilde{a}_{+,-} (x, \lambda) \tilde{a}_{-, -} (x', \lambda)} , \\
&\tilde{b}_{+, -} (\lambda, x, x') = -i\pi^{-1} \mathrm{Wr} (\lambda)^{-1}\tilde{a}_{+, +} (x, \lambda) \tilde{a}_{-, -} (x', \lambda) + i\pi^{-1} \overline{\mathrm{Wr} (\lambda)}^{-1}\overline{\tilde{a}_{+, -} (x, \lambda) \tilde{a}_{-, +} (x', \lambda)}, \\
&\tilde{b}_{-, +} (\lambda, x, x') = -i\pi^{-1}\mathrm{Wr} (\lambda)^{-1}\tilde{a}_{+, -} (x, \lambda) \tilde{a}_{-, +} (x', \lambda) + i\pi^{-1}\overline{\mathrm{Wr} (\lambda)}^{-1}\overline{\tilde{a}_{+, +} (x, \lambda) \tilde{a}_{-, -} (x', \lambda)}, \\
&\tilde{b}_{-, -} (\lambda, x, x') = -i\pi^{-1}\mathrm{Wr} (\lambda)^{-1}\tilde{a}_{+, -} (x, \lambda) \tilde{a}_{-, -} (x', \lambda) + i\pi^{-1}\overline{\mathrm{Wr} (\lambda)}^{-1}\overline{\tilde{a}_{+, +} (x, \lambda) \tilde{a}_{-, +} (x', \lambda)},
\end{align*}
the spectral density is written as
\begin{align*}
\tilde{E} (\lambda, x, x') = \frac{\l}{(\lambda^2 -V(x))^{\frac{1}{4}} (\lambda^2 -V(x'))^{\frac{1}{4}}} \sum_{\sigma_{1}, \sigma_{2} \in \{\pm\}} \tilde{b}_{\sigma_{1}, \sigma_{2}} (\lambda, x, x') e^{iS_{\sigma_{1}, \sigma_{2}} (\lambda, x, x')},
\end{align*}
where $S_{\sigma_{1}, \sigma_{2}}$ are from (\ref{2510201028}). By Theorem \ref{2509021637} $(ii)$ and $(iv)$, we have $\left| \partial^{\alpha} _{\lambda} \tilde{b}_{\sigma_{1}, \sigma_{2}} (\lambda, x, x')\right| \lesssim |\lambda|^{-\alpha}$ for any $\alpha=0,1,2$ and
\begin{align*}
\left|\partial^{\alpha} _{\lambda}\left(\l  (\lambda^2 -V(x))^{-\frac{1}{4}}(\lambda^2 -V(x'))^{-\frac{1}{4}}\right)\right| \lesssim& (\lambda^2 -V(x))^{-\frac{1}{4}}(\lambda^2 -V(x'))^{-\frac{1}{4}}|\lambda|^{1-\alpha}\\
\lesssim& \min (|\l|^{1-\a}\jap{x}^{\frac{\mu}{4}}\jap{x'}^{\frac{\mu}{4}},|\lambda|^{-\alpha})
\end{align*}
for any $\alpha\in\mathbb{N}_0$. 
Therefore the claim follows by setting
\begin{align*} 
b_{\sigma_{1}, \sigma_{2}} (\lambda, x, x') =  \frac{\l}{(\lambda^2 -V(x))^{\frac{1}{4}} (\lambda^2 -V(x'))^{\frac{1}{4}}} \tilde{b}_{\sigma_{1}, \sigma_{2}} (\lambda, x, x')
\end{align*}
where \eqref{eq:ampaddecay} follows from \eqref{261271003}.
\end{proof}

\begin{remark}
We also have the expression
\begin{align*}
\tilde{E} (\lambda, x, x') = \frac{2\lambda}{\pi |\mathrm{Wr}(\lambda)|^2} \left(u_+ (x, \lambda) \overline{u_+ (x', \lambda)} + u_- (x, \lambda) \overline{u_- (x', \lambda)}\right).
\end{align*}
On the other hand, since our phase function satisfies $e^{iy(x,\l)}\sim e^{i|\l|x}$ (not $e^{i\l x}$) as $|x|\to \infty$, we do \textbf{not} have the expression
\begin{align*}
\frac{2\lambda}{\pi |\mathrm{Wr}(\lambda)|^2} \left(u_+ (x, \lambda)u_+ (x', -\lambda) + u_- (x, \lambda)u_- (x', -\lambda) \right)
\end{align*}
as \cite[p.211 Proposition 5.1.5]{Y}. Perhaps, we could obtain the same asymptotics for $\overline{u_{\pm} (x, \lambda)}$ and $u_{\pm} (x, -\lambda)$ using a different branch of $\sqrt{\cdot}$ like \cite{Y0}.
\end{remark}

\section{Estimates of oscillatory integrals associated with the spectral projection}
The purpose of this section is to apply the stationary phase theorems established in Section \ref{section:prelim} to our WKB expression of the spectral density \eqref{2509041043}. Throughout of this section, we suppose that Assumption \ref{assump:V} holds. In particular, we have
\begin{align}\label{assum:Vbound}
-C_{V,1}\jap{x}^{-\m}\leq V(x)\leq -C_{V,2}\jap{x}^{-\m}
\end{align}
for constants $C_{V,1},C_{V,2}>0$.

\subsection{Estimates of oscillatory integrals associated with diagonal terms}\label{2512101237}

We define the phase function
\begin{align*}
\Phi(\l):=\Phi(\l,t,x,x')=-t\l^2+S(\l,x,x'),\quad S(\l)=S(\l,x,x'):=\int_{x'}^x\sqrt{\l^2-V(s)}ds
\end{align*}
which correspond to the ones in \eqref{2509041043} for $(\s_1,\s_2)=(+,-)$. We consider the oscillatory integral
\begin{align}\label{eq:I(a)osc}
I(a):=I_{t,x,x'}(a)=\int_0^{\infty}a(\l)e^{i\Phi(\l)}d\l.
\end{align}

%
%
%

\begin{thm}\label{thm:WKBosc}
There exists $C>0$ such that
\begin{align*}
|I(a)|\leq C|t|^{-\frac{1}{2}}( \|a\|_{L^{\infty}}+ \|\l\pa_{\l}a\|_{L^{\infty}})+C\jap{\max(|x|,|x'|)}^{-\frac{\m}{2}}|t|^{-\frac{1}{2}}( \|\l^{-1}a\|_{L^{\infty}}+ \|\pa_{\l}a\|_{L^{\infty}})
\end{align*}
for all $a\in C^{1}((0,\infty))$ provided the right hand side is bounded, $t\neq 0$, and $x,x'\in \R$.

\end{thm}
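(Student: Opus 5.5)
We reduce to $t>0$ and $x\ge x'$; the other cases follow by complex conjugation and by swapping $x,x'$, which only changes the sign of $S$. We write $X:=\jap{\max(|x|,|x'|)}$. The derivatives of the phase are
\begin{align*}
\pa_\l\Phi=-2t\l+\l\int_{x'}^x(\l^2-V(s))^{-\frac12}ds,\qquad
\pa_\l^2\Phi=-2t+\int_{x'}^x\frac{-V(s)}{(\l^2-V(s))^{\frac32}}ds.
\end{align*}
So $\pa_\l\Phi=\l\,(F(\l)-2t)$, where $F(\l):=\int_{x'}^x(\l^2-V)^{-1/2}ds$ is decreasing in $\l$. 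We have $F(0)=M_1$, and $F(\l)-F(0)\approx-\l^2 M_2$ for small $\l$, with $M_j=\int_{x'}^x|V|^{-\frac{2j-1}{2}}$. Using \eqref{assum:Vbound}, I would first record the two-sided bounds
\begin{align*}
F(0)-F(\l)\sim \l^2\int_{x'}^x\frac{ds}{|V(s)|^{\frac12}(\l^2-V(s))},\qquad F(\l)\lesssim \min\left(\l^{-1}(x-x'),M_1\right).
\end{align*}

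We split the amplitude with Lemma \ref{lem:PUest} at $\l\sim \epsilon X^{-\m/2}$ for a small $\epsilon>0$.

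\textbf{High frequency part, $\l\gtrsim X^{-\m/2}$.} Here $\l^2-V(s)\sim\l^2$ on $[x',x]$, so $F(\l)\sim(x-x')/\l$ and $F$ behaves like the free phase. Writing $\pa_\l\Phi=(G(\l)-2t\l)$ with $G(\l)=\l F(\l)\sim x-x'$ and $G'(\l)=O(\l^{-1}\cdot\text{small})$, we get $\pa_\l^2\Phi=-2t+O(\l^{-1}(x-x'))$. If $t\gg (x-x')/\l_0$, with $\l_0$ the lower endpoint, then $|\pa_\l\Phi|\gtrsim t|\l-m|$ and $|\pa_\l^2\Phi|\lesssim t$ for a single root $m$, since $\l\mapsto G(\l)/\l$ is monotone. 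Lemma \ref{Lem:Qstph1}, case \eqref{eq:stphaseas2} with $M=t$, then gives $t^{-1/2}(\|a\|_{L^{\infty}}+\|\l\pa_\l a\|_{L^{\infty}})$. If instead $t\lesssim (x-x')X^{\m/2}$, then $\pa_\l^2\Phi$ is dominated by the $S$-term. In that case I would use the monotone structure: set $m$ to be the root of $F(\l)=2t$. Since $|F'(\l)|\sim F(\l)/\l$ there, we get $|\pa_\l\Phi|\gtrsim t|\l-m|$ near $m$, and the large-$\l$ tail is handled by case \eqref{eq:stphaseas1}. The bookkeeping that makes the constants uniform is routine.

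\textbf{Low frequency part, $\l\lesssim X^{-\m/2}$.} Here $F(\l)-2t=(M_1-2t)-\l^2 H(\l)$, with $H\sim M_2\sim X^{\m}M_1$ (this is the content expected from Lemma \ref{lem:phasederiunivlow}). Hence
\begin{align*}
|\pa_\l\Phi|\gtrsim M_2\,\l\,|\l^2-m|,\quad m:=(M_1-2t)/M_2',\qquad |\pa_\l^2\Phi|\lesssim M_2(\l^2+|m|).
\end{align*}
The second bound is checked directly from the formula for $\pa_\l^2\Phi$ together with $\l^2\lesssim X^{-\m}$. Lemma \ref{lem:Qstph2} with $M=M_2$ then bounds this piece by $M_2^{-1/2}(\|\l^{-1}a\|_{L^{\infty}}+\|\pa_\l a\|_{L^{\infty}})$.

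If $t\sim M_1$, this is $\lesssim X^{-\m/2}t^{-1/2}$, which is the claimed second term. If $t\gg M_1$ or $t\ll M_1$, the phase is non-degenerate: $|\pa_\l\Phi|\gtrsim |t-M_1/2|\,\l$. Case \eqref{eq:stphaseas1} of Lemma \ref{Lem:Qstph1} (with $M\sim \max(t,M_1)$, and with $m$ replaced by the scale $X^{-\m/2}M$) then gives $\max(t,M_1)^{-1/2}\le t^{-1/2}$. Alternatively, one can still apply Lemma \ref{lem:Qstph2} with $|m|$ large.

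\textbf{Main obstacle.} The hardest step will be the uniform lower bound $|\pa_\l\Phi|\gtrsim M\l|\l^2-m|$ in the low regime. It requires a genuine quantitative comparison $F(0)-F(\l)\sim\l^2M_2$ uniformly in $x,x'$, including the case where $x'$ is near $x$. It also requires $M_2\gtrsim X^{\m}M_1$. This last inequality fails when $[x',x]$ lies mostly where $|s|\ll X$. I would handle that case by choosing the cutoff at $\l\sim\jap{x}^{-\m/2}$ adapted to the interval, or by proving $M_2\gtrsim X^{\m}M_1$ only when $t\sim M_1$ forces $x-x'\gtrsim X$ appropriately, with the short-interval case absorbed into the first term.
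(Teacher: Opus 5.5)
Your strategy is the paper's. You cut at $\l\sim X^{-\m/2}$ with $X=\jap{\max(|x|,|x'|)}$ and compare with the free phase at high energy via Lemma \ref{Lem:Qstph1}. At low energy you use Lemma \ref{Lem:Qstph1} when $t$ is far from $M_1/2$, and Lemma \ref{lem:Qstph2} with $M=M_2$ when $t\sim M_1$, closing with $M_2\gtrsim X^{\m}M_1$.

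\textbf{The gap.} It sits exactly at the step you call the main obstacle. You leave $M_2\gtrsim X^{\m}M_1$ unproved and assert it can fail, but it never fails, and the missing idea is elementary.
\begin{itemize}
\item Take $x\ge x'$ and $|x|\ge|x'|$; the other case is symmetric. Then $x\ge 0$ and $x'\ge -x$, so $x-x'\le 2x$.
\item The subinterval $[\max(x',x/2),x]$ therefore has length at least $\frac14(x-x')$, and $\jap{s}\sim X$ on it. Hence $M_2\gtrsim X^{\frac{3\m}{2}}(x-x')$.
\item Since $\jap{s}\le X$ on all of $[x',x]$, you also get $M_1\lesssim X^{\frac{\m}{2}}(x-x')$. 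This is Lemma \ref{lem:phasederiunivlow}~$(i)$.
\end{itemize}
The bound $F(0)-F(\l)\sim\l^2M_2$ is equally immediate, including for $x'$ near $x$. Because $\jap{s}\le X$ on $[x',x]$, you have $|V(s)|\gtrsim X^{-\m}\gtrsim\l^2$ there. So $\l^2-V(s)\sim|V(s)|$ pointwise for $\l\lesssim X^{-\m/2}$.

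Your fallbacks are unnecessary, and they are not sound as stated. The condition $t\sim M_1$ imposes no restriction on $x-x'$. The output of Lemma \ref{lem:Qstph2} involves $\|\l^{-1}a\|_{L^\infty}$, which the first term of the estimate does not control, so it cannot be absorbed there.

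\textbf{Further repairs.} The same subinterval is what your high-energy step actually needs. The claim ``$\l^2-V(s)\sim\l^2$ on $[x',x]$'' is false when the interval passes near $s=0$, e.g.\ $x'=-x$. The lower bounds $\l F(\l)\gtrsim x-x'$ and $|F'(\l)|\gtrsim F(\l)/\l$ come only from the part of $[x',x]$ near the far endpoint, as in Lemma \ref{lem:phasederiuniv}. Three more points need fixing.
\begin{enumerate}
\item Your reduction is incomplete. Conjugation combined with swapping sends $(t,x,x')$ to $(-t,x',x)$, so the case $t>0$, $x<x'$ is never reached. It is easy: there $|\pa_\l\Phi|=\l\,(2t+\int_x^{x'}(\l^2-V)^{-1/2}ds)\ge 2t\l$ and $|\pa_\l^2\Phi|\le\l^{-1}|\pa_\l\Phi|$. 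But it must be treated separately.
\item You have interchanged the two cases of Lemma \ref{Lem:Qstph1}. Case \eqref{eq:stphaseas1} needs $|\pa_\l\Phi|\gtrsim m$ with support in $[0,m/M]$. It therefore serves the region below the stationary point, where $|\pa_\l\Phi|\gtrsim x-x'$. The large-$\l$ tail needs \eqref{eq:stphaseas2} with $m=0$. So does the non-degenerate low-energy piece, since there $|\pa_\l\Phi|\gtrsim\max(t,M_1)\l$ vanishes at $\l=0$.
\item When $2t<M_1$, the parameter in Lemma \ref{lem:Qstph2} must be $\l_0^2$, where $\l_0\lesssim X^{-\m/2}$ is the actual zero of $F-2t$. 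The lower bound is then read off from $F(\l)-F(\l_0)$. A normalized value such as $(M_1-2t)/M_2$ misplaces the zero and does not yield $|\pa_\l\Phi|\gtrsim M_2\l|\l^2-m|$.
\end{enumerate}
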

We will use the following calculation frequently:
\begin{align}
\pa_{\l}\Phi(\l)=&-2t\l+\int_{x'}^x\frac{\l}{\sqrt{\l^2-V(s)}}ds,\quad \pa_{\l}^2\Phi(\l)=-2t+\int_{x'}^x\frac{-V(s)}{(\l^2-V(s))^{\frac{3}{2}}}ds,\label{eq:phaseder1}\\
\pa_{\l}^3\Phi(\l)=&3\l\int_{x'}^x\frac{V(s)}{(\l^2-V(s))^{\frac{5}{2}}}ds. \label{eq:phaseder2}
\end{align}
Moreover, we introduce important quantities
\begin{align}\label{eq:defM_1M_2}
M_1:=\left|\int_{x'}^x|V(s)|^{-\frac{1}{2}}ds\right|,\quad M_2:=\left|\int_{x'}^x|V(s)|^{-\frac{3}{2}}ds\right|.
\end{align}

To prove Theorem \ref{thm:WKBosc}, we assume that $t>0$ holds. The other case is similarly dealt with. Furthermore, the proof for $x\leq x'$ is immediate as the two terms in the above calculation of $\pa_{\l}\Phi(\l)$ have the same sign:

\begin{proof}[Proof of Theorem \ref{thm:WKBosc} for $t>0$ and $x\leq x'$]
Under the assumptions $t>0$ and $x\leq x'$, the two terms in the above calculation of $\pa_{\l}\Phi(\l)$ have the same sign. Therefore, 
\begin{align*}
&|\pa_{\l}\Phi(\l)|\geq 2t\l+|x-x'|,\quad |\pa_{\l}^2\Phi(\l)|\leq \l^{-1}(2t\l+|x-x'|)\quad \text{for $\l\geq \jap{\max(|x|,|x'|)}^{-\frac{\m}{2}}$},\\
&|\pa_{\l}\Phi(\l)|\geq (2t+M_1)\l,\quad |\pa_{\l}^2\Phi(\l)|\leq 2t+M_1\quad \text{for $\l\leq 2\jap{\max(|x|,|x'|)}^{-\frac{\m}{2}}$},
\end{align*}
where we use that $V$ is a negative function. Put $\tilde{K}_1:=\{\l\geq \jap{\max(|x|,|x'|)}^{-\frac{\m}{2}}\}$, $\tilde{K}_2:=\{\l\leq 2\jap{\max(|x|,|x'|)}^{-\frac{\m}{2}}\}$, $\tilde{K}_{1,1}:=\{\l\leq 2|x-x'|/t\}\cap \tilde{K}_1$ and $\tilde{K}_{1,2}:= \{\l\geq |x-x'|/t\}\cap \tilde{K}_1$. By Lemma \ref{lem:PUest}, we may assume that $a$ is supported in $\tilde{K}_{1,1}$, $\tilde{K}_{1,2}$ or $\tilde{K}_2$.

Firstly, when $a(\l)$ is supported in $\tilde{K}_{1,1}$, then we can apply Lemma \ref{Lem:Qstph1} with $m=|x-x'|/t$ $M=t/2$ under the condition \eqref{eq:stphaseas1}. Secondly, if $a(\l)$ is supported in $\tilde{K}_{1,2}$, then we use Lemma \ref{Lem:Qstph1} with $m=0$ and $M=t$ under the condition \eqref{eq:stphaseas2}. Finally, for $a(\l)$ supported in $\tilde{K}_2$, Lemma \ref{Lem:Qstph1} with $M=2t+M_1$ and $m=0$ under the assumption \eqref{eq:stphaseas2} can be applied. In this way, we have $|I(a)|\lesssim t^{-\frac{1}{2}}(\|a\|_{L^{\infty}}+\|\l\pa_{\l}a\|_{L^{\infty}})$ for all the cases. This completes the proof.
\end{proof}

In the following, we assume
\begin{align}\label{assu:txysign}
t>0,\quad x\geq x',\quad |x|\geq |x'|.
\end{align}
The case $|x|\leq |x'|$ is similarly treated.

Thanks to Lemma \ref{lem:PUest}, we have only to consider the following regimes separately:
\begin{align*}
K_{1,R}:=\{\l\in (0,\infty)\mid \l\geq  R\jap{x}^{-\frac{\m}{2}}\},\quad K_{2,R}:=\{\l\in (0,\infty)\mid \l\leq  R\jap{x}^{-\frac{\m}{2}}\}.
\end{align*}

\subsubsection{Estimates in high-energy regime}\label{262221527}

We write $K_{1,R}$ as a sum of three regions
\begin{align*}
K_{1,1,R}:=&\left\{\l\in K_{1,R}\,\middle|\,  \l\leq\frac{2|x-x'|}{\sqrt{R}t}   \right\},\quad K_{1,2,R}:=\left\{\l\in K_{1,R}\,\middle|\, \frac{|x-x'|}{\sqrt{R}t}\leq \l\leq \frac{2\sqrt{R}|x-x'|}{t} \right\},\\
K_{1,3,R}:=&\left\{\l\in K_{1,R}\,\middle|\, \l\geq \frac{\sqrt{R}|x-x'|}{t} \right\}.
\end{align*}

\begin{proposition}\label{prop:disphigh}
For sufficiently large $R\geq 1$, there exists $C>0$ such that
\begin{align*}
|I(a)|\leq Ct^{-\frac{1}{2}}( \|a\|_{L^{\infty}}+ \|\l\pa_{\l}a\|_{L^{\infty}})
\end{align*}
for all $a\in C^{1}((0,\infty))$ supported in $K_{1,R}$ provided the right hand side is bounded and $(t,x,x')$ satisfies \eqref{assu:txysign}.

\end{proposition}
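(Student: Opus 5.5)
By Lemma \ref{lem:PUest}, applied to cutoffs adapted to the three subregions, it suffices to treat $a$ supported in $K_{1,1,R}$, $K_{1,2,R}$ or $K_{1,3,R}$ separately. In each case I will verify the hypotheses of Lemma \ref{Lem:Qstph1} with $M\sim t$ and constants $C_1,C_2$ independent of $(t,x,x')$. The input is a comparison of $S$ with the free phase on $K_{1,R}$, starting from \eqref{eq:phaseder1}--\eqref{eq:phaseder2}.

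\textbf{Key bounds on $K_{1,R}$.} Since $|x'|\le|x|$, we have $|V(s)|\lesssim \jap{s}^{-\m}$, and $\l\ge R\jap{x}^{-\m/2}$, I claim
\begin{align*}
\pa_\l S(\l)=\int_{x'}^x\frac{\l}{\sqrt{\l^2-V(s)}}ds\in\bigl[(1-CR^{-2})(x-x'),\,(x-x')\bigr],\qquad |\pa_\l^2S(\l)|\lesssim \l^{-1}R^{-2}\cdots.
\end{align*}
The first bound is not pointwise in $s$, because $|V(s)|$ can be large for $|s|\ll|x|$. The second term of the upper bound is the delicate one. I will bound $\int_{x'}^x |V|(\l^2-V)^{-3/2}ds$ by splitting $s$ according to whether $\l^2\ge|V(s)|$. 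On the set where $\l^2\ge|V(s)|$ the integrand is at most $\l^{-3}|V(s)|$. On the complementary set, where $\jap{s}^{\m}\lesssim\l^{-2}$, the integrand is at most $|V|^{-1/2}\lesssim\jap{s}^{\m/2}$, but the set has length $\lesssim\l^{-2/\m}$.

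Since $\m<2$, the first piece satisfies
\begin{align*}
\int_{x'}^x \jap{s}^{-\m}ds\lesssim |x-x'|\jap{x}^{-\m}+\jap{x}^{1-\m}.
\end{align*}
This is where $|x-x'|$ can be small compared to $|x|$, and I expect it to be the main obstacle. Two cases arise.

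\textbf{Case $|x-x'|\gtrsim|x|$.} All contributions are $\lesssim R^{-2}\l^{-1}|x-x'|$, using $\l\jap{x}^{\m/2}\ge R$. The same splitting gives $\pa_\l S\ge(1-CR^{-\min(2,\cdot)})(x-x')$.

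\textbf{Case $x,x'$ close.} Here $\jap{s}\sim\jap{x}$ on $[x',x]$, so the pointwise estimate applies directly and gives $|V|(\l^2-V)^{-3/2}\le \l^{-1}\cdot R^{-2}$.

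In both cases, for $R$ large,
\begin{align*}
\tfrac12|x-x'|\le\pa_\l S\le|x-x'|,\qquad |\pa_\l^2S|\le \tfrac{1}{10}\l^{-1}|x-x'|.
\end{align*}

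\textbf{The three regions.} Write $\pa_\l\Phi=-2t\l+\pa_\l S$.

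On $K_{1,1,R}$ (small $\l$), I have $2t\l\le 4R^{-1/2}|x-x'|\le \tfrac14|x-x'|$, so $|\pa_\l\Phi|\ge\tfrac14|x-x'|$. Also $|\pa_\l^2\Phi|\le 2t+\tfrac1{10}\l^{-1}|x-x'|\lesssim\l^{-1}|x-x'|$ whenever $t\l\lesssim|x-x'|$. I then apply \eqref{eq:stphaseas1} with $m=|x-x'|$ and $M= \sqrt{R}t/2$, since $\supp a\subset[0,m/M]$. This gives $M^{-1/2}\lesssim t^{-1/2}$.

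On $K_{1,3,R}$, $\pa_\l S\le |x-x'|\le R^{-1/2}t\l$, so $|\pa_\l\Phi|\ge t\l$ and $|\pa_\l^2\Phi|\le 2t+\tfrac1{10}\l^{-1}|x-x'|\lesssim t$. I apply \eqref{eq:stphaseas2} with $m=0$ and $M=t$.

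On $K_{1,2,R}$ there is a genuine stationary point $\l_0$, with $2t\l_0=\pa_\l S(\l_0)$. Here $\pa_\l^2\Phi=-2t+\pa_\l^2S$, and $|\pa_\l^2S|\le\tfrac1{10}\l^{-1}|x-x'|\le \tfrac{\sqrt R}{10}t$. This is not small compared to $t$, so I need the sharper bound $|\pa_\l^2 S|\lesssim R^{-2}\l^{-1}|x-x'|\cdot$ from above; with $\l\ge |x-x'|/(\sqrt R t)$ it yields $|\pa_\l^2S|\lesssim R^{-3/2}t\le t$. Hence $-3t\le\pa_\l^2\Phi\le -t$ on the region. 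Since $\pa_\l\Phi$ is monotone, it has at most one zero $m=\l_0$, and $|\pa_\l\Phi|\ge t|\l-m|$ by the mean value theorem. If there is no zero in the region, I take $m$ to be the nearer endpoint. Condition \eqref{eq:stphaseas2} then holds with $M=t$.

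The step I expect to be hardest is obtaining the relative smallness $R^{-2}$ of $\pa_\l^2 S$ uniformly, including the near-origin portion of $[x',x]$. If the $R^{-2}$ gain fails there, I would choose $\sqrt R$ in the region definitions to be some smaller power $R^{\delta}$ to absorb it.
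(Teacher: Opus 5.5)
Your proposal is correct after one fixable correction (second paragraph below). Your treatment of $K_{1,1,R}$ and $K_{1,3,R}$ is essentially the paper's (Lemma \ref{lem:largephase} (i), (ii) combined with Lemma \ref{Lem:Qstph1}). There, the crude bounds $\frac14|x-x'|\le\pa_\l S\le|x-x'|$ and $0\le\pa_\l^2S\le\l^{-1}|x-x'|$ of Lemma \ref{lem:phasederiuniv} already suffice; the lower bound comes simply from discarding $s<\max(x',x/2)$.

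On $K_{1,2,R}$ you take a genuinely different route. You prove uniform concavity, $-2t\le\pa_\l^2\Phi\le -t$. This needs an \emph{upper} bound $\pa_\l^2S\le\epsilon(R)\l^{-1}|x-x'|$ with $\epsilon(R)\sqrt R\to0$, over all of $[x',x]$, including the part near the origin where $|V|\gg\l^2$. The paper never needs smallness of $\pa_\l^2S$. It works with $f=\l^{-1}\pa_\l\Phi=-2t+\int_{x'}^x(\l^2-V)^{-1/2}ds$, which is strictly decreasing for trivial reasons, and finds its zero $\l_0\in(\frac{x-x'}{12t},\frac{x-x'}{t})$ by the intermediate value theorem. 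It then bounds $|f(\l)-f(\l_0)|$ from below by writing it as $|\l^2-\l_0^2|$ times a positive integral, keeping only $s\in[\max(x',x/2),x]$, where $|V(s)|\lesssim\jap{x}^{-\m}\lesssim\l^2\sim_R\l_0^2$. The bound $|\pa_\l^2\Phi|\lesssim t$ then uses only $\pa_\l^2S\le\l^{-1}|x-x'|\le\sqrt R\,t$, with $R$-dependent constants. So the paper only needs lower bounds on positive integrals, taken from the harmless part of $[x',x]$ near $x$. Your route gives a cleaner structure (uniform concavity, a unique critical point, the mean value theorem), but forces you to control the near-origin contribution from above.

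That control is weaker than you state. For $\m\in(1,2)$ the $R^{-2}$ gain is false. Take $x'=0$ and $\l=R\jap{x}^{-\m/2}$ with $\jap{x}\gg R^{2/\m}$. The set $\{|V(s)|>\l^2\}\cap[0,x]$ has length $\sim\l^{-2/\m}=R^{-2/\m}\jap{x}$, and the integrand is $\sim|V|^{-1/2}\sim\jap{s}^{\m/2}$ there. Hence $\pa_\l^2S\gtrsim R^{-2/\m}\l^{-1}|x-x'|$, and likewise $|x-x'|-\pa_\l S\gtrsim R^{-2/\m}|x-x'|$. Your display $\int_{x'}^x\jap{s}^{-\m}ds\lesssim|x-x'|\jap{x}^{-\m}+\jap{x}^{1-\m}$ is also false for $\m>1$ and $x'\le0$, since the left side is then $\sim1$. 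On $\{|V|\le\l^2\}\subset\{\jap{s}\gtrsim\l^{-2/\m}\}$ one gets $\lesssim\l^{2-2/\m}$ instead.

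Done correctly, your splitting gives $\epsilon(R)\lesssim R^{-\min(2,2/\m)}$, with an extra $\log R$ at $\m=1$. Since $\min(2,2/\m)>1>\frac12$, this still yields $\pa_\l^2S\le C\epsilon(R)\sqrt R\,t\le t$ on $K_{1,2,R}$ for large $R$. So the argument closes with the original $\sqrt R$, and the $R^\delta$ fallback is unnecessary. Finally, when $\pa_\l\Phi$ has no zero in $K_{1,2,R}$, take $m$ to be the endpoint at which $|\pa_\l\Phi|$ is smallest, not simply the ``nearer'' one.
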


The next two lemmas show that the phase function can be approximated as $\Phi\approx -t\l^2+\l(x-x')$ for $\l\in K_{1,R}$.

\begin{lemma}\label{lem:phasederiuniv}
There exists $c_0>0$ depending only on $\m$ and $C_{V,1}$ such that
\begin{align*}
\frac{1}{4}|x-x'|\leq \left|\int_{x'}^x\frac{\l}{\sqrt{\l^2-V(s)}}ds \right|\leq |x-x'|,\quad \left|\int_{x'}^x\frac{V(s)}{(\l^2-V(s))^{\frac{3}{2}}}ds \right|\leq \frac{|x-x'|}{\l}
\end{align*}
when $\l\geq  c_0\jap{x}^{-\frac{\m}{2}}$, $x\geq x'$ and $|x|\geq |x'|$ hold.

\end{lemma}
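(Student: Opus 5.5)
The plan is to get both bounds pointwise in $s\in[x',x]$ and then integrate. The key observation is that the hypotheses $x\geq x'$ and $|x|\geq|x'|$ put every $s\in[x',x]$ in the range $|s|\leq |x|$. Hence $\jap{s}\leq\jap{x}$. By \eqref{assum:Vbound} this gives
\begin{align*}
|V(s)|\leq C_{V,1}\jap{s}^{-\m}\quad\text{for all } s\in[x',x].
\end{align*}
This bound is uniform along the segment but not yet in the form we need. Since $\m>0$, $\jap{s}^{-\m}$ is \emph{largest} where $|s|$ is smallest, so we cannot simply replace $\jap{s}^{-\m}$ by $\jap{x}^{-\m}$. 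Two routes are available. One is to work with the weaker region $\l\geq c_0$, with $c_0$ chosen so that $c_0^2\geq C_{V,1}$; then $|V(s)|\leq \l^2$ trivially. The other is to lose a portion of the segment. I would take the cleaner route first and check whether the lemma tolerates it.

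The upper bound in the first estimate is trivial, because $\l/\sqrt{\l^2-V(s)}\leq 1$ whenever $V\leq 0$. For the lower bound, the integrand exceeds $1/\sqrt{2}\geq 1/4$ at every $s$ with $|V(s)|\leq \l^2$. So it suffices to show that this condition holds on a subset of $[x',x]$ of measure at least $\tfrac{1}{4}|x-x'|\cdot\sqrt{2}$, or more crudely on at least half the interval. Here I would split according to the size of $|s|$. On the set $\{s\in[x',x]: |s|\geq |x|/2\}$ we have $\jap{s}\gtrsim\jap{x}$, so $|V(s)|\leq C_{V,1}2^{\m}\jap{x}^{-\m}\leq \l^2$ once $c_0^2\geq 2^{\m}C_{V,1}$. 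The remaining set $\{|s|<|x|/2\}\cap[x',x]$ has length at most $|x|$. Note that $x\geq |x'|$ forces $x\geq0$, so either $x'\leq -x/2$, or $x'>-x/2$ and then $|x-x'|\ldots$ The claim I need is that the good set $[x/2,x]\cup[x',-x/2]$ has length at least $\tfrac12|x-x'|$. That holds when $|x-x'|\geq$ roughly $x$, but it fails if, for example, $x'=0$. So I would refine the split to $\{s\geq x/2\}$ versus $\{s<x/2\}$ and use monotonicity on the worst case. This is the delicate step. If the geometric split does not give the constant $1/4$, I would fall back on the inequality
\begin{align*}
\frac{\l}{\sqrt{\l^2+C_{V,1}\jap{s}^{-\m}}}\geq \frac{\l}{\l+\sqrt{C_{V,1}}\jap{s}^{-\m/2}}
\end{align*}
and integrate directly, comparing $\int_{x'}^x\jap{s}^{-\m/2}ds$ with $|x-x'|\jap{x}^{-\m/2}$. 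This comparison holds up to a constant because $\m/2<1$ and $|s|\leq |x|$ on the segment. Combining this with Jensen's inequality (convexity of $r\mapsto \l/(\l+r)$) gives the average bound with the ratio controlled by $c_0$.

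For the second estimate I would use
\begin{align*}
\frac{|V(s)|}{(\l^2-V(s))^{3/2}}\leq \frac{1}{\l}\cdot\frac{|V(s)|}{\l^2-V(s)}\leq \frac{1}{\l},
\end{align*}
which follows from $(\l^2-V)^{1/2}\geq\l$. Integrating over $[x',x]$ immediately gives $|x-x'|/\l$. So no condition on $c_0$ is needed for the second estimate.

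The main obstacle is therefore the constant $\tfrac14$ in the lower bound. The Jensen/averaging argument, which turns the condition $\l\geq c_0\jap{x}^{-\m/2}$ into a bound on the average of $\jap{s}^{-\m/2}$ over the segment, is the step to work out carefully. Its constant depends only on $\m$, and $c_0$ is then chosen large in terms of $\m$ and $C_{V,1}$.
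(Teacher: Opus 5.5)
Your averaging argument is correct and takes a different route from the paper. The paper argues pointwise. It discards everything except $[\max(x',x/2),x]$. There $\jap{s}\geq\jap{x/2}\geq\frac12\jap{x}$, so the integrand is $\geq\frac34$ once $c_0$ is large, and the paper multiplies this by the length of the subinterval.

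You keep the whole segment. You combine $\sqrt{\l^2-V(s)}\leq\l+\sqrt{C_{V,1}}\jap{s}^{-\m/2}$, convexity of $r\mapsto\l/(\l+r)$, and the averaged bound $\frac{1}{x-x'}\int_{x'}^x\jap{s}^{-\m/2}ds\leq C_\m\jap{x}^{-\m/2}$. This gives an average of the integrand of at least $(1+\sqrt{C_{V,1}}C_\m/c_0)^{-1}$. The tangent-line bound $\frac{\l}{\l+r}\geq 1-\frac{r}{\l}$ would replace Jensen just as well.

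The averaged bound is the one step you only sketched. Prove it by splitting at $x'=x/2$:
\begin{itemize}
\item If $x'\geq x/2$, the integrand is $\lesssim\jap{x}^{-\m/2}$ pointwise.
\item Otherwise $x-x'\geq x/2$, and for $x\geq1$ we have $\int_{-x}^x\jap{s}^{-\m/2}ds\leq\frac{2}{1-\m/2}x^{1-\m/2}$. This is where $\m<2$ enters.
\item The case $x\leq1$ is trivial.
\end{itemize}

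Your route shows in one stroke that the average tends to $1$ as $c_0\to\infty$. It pays with a constant $C_\m\sim(1-\m/2)^{-1}$. The paper's pointwise route needs no integrability and no restriction on $\m$, but it only captures a fixed fraction of the segment.

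Two corrections to the exploratory parts.

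First, drop the route $\l\geq c_0$. It proves a strictly weaker statement, and the lemma is used on $K_{1,R}$, where $\l$ can be as small as $R\jap{x}^{-\m/2}\ll 1$.

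Second, your doubt about the geometric split is misplaced. $x'=0$ is not a counterexample: there $[x/2,x]$ is exactly half of $[0,x]$. The good set $[x/2,x]\cup[x',-x/2]$ falls below half only for $-x<x'<0$. Its length is always at least $\frac13|x-x'|$, with equality at $x'=-x/2$. A third suffices: take $c_0^2\geq\frac97 2^{\m}C_{V,1}$, so the integrand is $\geq\frac34$ on the good set, and $\frac34\cdot\frac13=\frac14$.

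This two-sided set is in fact what the paper's arithmetic requires. Its assertion $x-\max(x',\frac12 x)\geq\frac13(x-x')$ fails for $x'<-x/2$; at $x'=-x$ the ratio is only $\frac14$. The slip is harmless, since only some fixed positive constant is used in Lemma~\ref{lem:largephase}.

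Your treatment of the upper bound and of the second estimate coincides with the paper's.
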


\begin{proof}

Since $V(s)<0$, we have $(\l^2-V(s))^{-\frac{1}{2}}\leq \l^{-1}$ and $(\l^2-V(s))^{-\frac{3}{2}}\leq \l^{-1}|V(s)|^{-1}$. Therefore,
\begin{align*}
\left|\int_{x'}^x\frac{\l}{\sqrt{\l^2-V(s)}}ds \right|\leq \left|\int_{x'}^x\frac{\l}{\l}ds \right|=|x-x'|,\quad \left|\int_{x'}^x\frac{V(s)}{(\l^2-V(s))^{\frac{3}{2}}}ds \right|\leq  \frac{|x-x'|}{\l}
\end{align*}
which prove the upper bound of the first inequality and the second one. Thus, it suffices to prove the lower bound of the first inequality.

Since the integrant of $\int_{x'}^x\frac{\l}{\sqrt{\l^2-V(s)}}ds $ is always non-negative, we have
\begin{align*}
\left|\int_{x'}^x\frac{\l}{\sqrt{\l^2-V(s)}}ds \right|\underbrace{=}_{x\geq x'}\int_{x'}^x\frac{\l}{\sqrt{\l^2-V(s)}}ds\geq \int_{\max(x',\frac{1}{2}x)}^x\frac{\l}{\sqrt{\l^2-V(s)}}ds.
\end{align*}
For $\max(x',\frac{1}{2}x)\leq s\leq x$, 
\begin{align*}
(\l^2-V(s))^{-\frac{1}{2}}\geq (\l^2+C_{V,1}\jap{s}^{-\m})^{-\frac{1}{2}}\underbrace{\geq}_{\mathclap{\max(x',\frac{1}{2}x)\leq s\leq x}} (\l^2+C_{V,1}\jap{x/2}^{-\m})^{-\frac{1}{2}}\underbrace{\geq}_{\mathclap{\l\geq c_0\jap{x}^{-\frac{\m}{2}},c_0\gg 1 }}  \frac{3}{4\l}.
\end{align*}
Integrating this from $\max(x',\frac{1}{2}x)$ to $x$, we obtain
\begin{align*}
\int_{\max(x',\frac{1}{2}x)}^x\frac{\l}{\sqrt{\l^2-V(s)}}ds\geq \frac{3(x-\max(x',\frac{1}{2}x))}{4\l}.
\end{align*}
Finally, we observe that $x-\max(x',\frac{1}{2}x)\geq \frac{1}{3}(x-x')$. Combining them, we can prove the lower bound of the first inequality.
\end{proof}

\begin{lemma}\label{lem:largephase}
For sufficiently large $R\geq 1$, we have the following:

\vspace{1mm}
\noindent$(i)$ $|\pa_{\l}\Phi(\l)|\gtrsim |x-x'|$ and $|\pa_{\l}^2\Phi(\l)|\lesssim \l^{-1}|x-x'|$ for $\l\in K_{1,1,R}$.

\vspace{1mm}
\noindent$(ii)$ $|\pa_{\l}\Phi(\l)|\gtrsim t\l$ and $|\pa_{\l}^2\Phi(\l)|\lesssim t$ for $\l\in K_{1,3,R}$.

\vspace{1mm}
\noindent$(iii)$ There exists $\l_0>0$ such that $|\pa_{\l}\Phi(\l)|\gtrsim t|\l-\l_0|$ and $|\pa_{\l}^2\Phi(\l)|\lesssim t$ for $\l\in K_{1,2,R}$.

\end{lemma}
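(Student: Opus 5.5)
We prove the three items directly from the formulas \eqref{eq:phaseder1} for $\pa_\l\Phi$ and $\pa_\l^2\Phi$, using Lemma \ref{lem:phasederiuniv}. We choose $R\geq c_0^2$ so that every $\l\in K_{1,R}$ satisfies $\l\geq c_0\jap{x}^{-\frac{\m}{2}}$. Write $g(\l):=\int_{x'}^x\l(\l^2-V(s))^{-1/2}ds$. Then Lemma \ref{lem:phasederiuniv} gives
\begin{align*}
\tfrac14|x-x'|\leq g(\l)\leq |x-x'|,\qquad 0\leq \int_{x'}^x\frac{-V(s)}{(\l^2-V(s))^{3/2}}ds\leq \frac{|x-x'|}{\l}.
\end{align*}
Consequently $\pa_\l\Phi=-2t\l+g(\l)$ and $|\pa_\l^2\Phi|\leq 2t+|x-x'|/\l$.

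\textbf{Item (i).} On $K_{1,1,R}$ we have $2t\l\leq 4|x-x'|/\sqrt R$. Once $\sqrt R\geq 32$, this gives $|\pa_\l\Phi|\geq \tfrac14|x-x'|-\tfrac18|x-x'|\gtrsim|x-x'|$. For the second derivative, the same inequality $t\leq 2|x-x'|/(\sqrt R\l)$ yields $|\pa_\l^2\Phi|\leq 2t+|x-x'|/\l\lesssim \l^{-1}|x-x'|$.

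\textbf{Item (ii).} On $K_{1,3,R}$ we have $|x-x'|\leq t\l/\sqrt R$. Hence $|\pa_\l\Phi|\geq 2t\l-|x-x'|\geq t\l$ for $\sqrt R\geq 1$, and $|\pa_\l^2\Phi|\leq 2t+|x-x'|/\l\leq 3t$.

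\textbf{Item (iii).} On $K_{1,2,R}$ the bound $|x-x'|/\l\leq \sqrt R\, t$ gives $|\pa_\l^2\Phi|\leq(2+\sqrt R)t\lesssim t$ with $R$ fixed. The lower bound is the main point, and we obtain it from a monotonicity property. The function $\l\mapsto g(\l)/\l=\int_{x'}^x(\l^2-V)^{-1/2}ds$ is strictly decreasing, so $\pa_\l\Phi=\l\,(g(\l)/\l-2t)$ has at most one zero. Let $\l_0>0$ be that zero if it exists. Otherwise, take $\l_0$ to be the endpoint of $\overline{K_{1,2,R}}$ closest to where $g(\l)/\l-2t$ would vanish; the estimate below still holds because $g(\l)/\l-2t$ then keeps one sign and is monotone.

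We quantify the monotonicity by
\begin{align*}
\frac{d}{d\l}\frac{g(\l)}{\l}=-\l\int_{x'}^x(\l^2-V)^{-3/2}ds .
\end{align*}
We need this to be comparable to $-|x-x'|/\l^2$ on $K_{1,R}$. The upper bound is immediate since $(\l^2-V)^{-3/2}\leq\l^{-3}$. For the lower bound, restrict to $s\in[\max(x',x/2),x]$, exactly as in the proof of Lemma \ref{lem:phasederiuniv}. There $\l^2-V(s)\leq \tfrac{16}{9}\l^2$, so the integral is at least $c\,|x-x'|\l^{-3}$.

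Therefore, for $\l$ between $\l_0$ and any point of $K_{1,2,R}$, the mean value theorem gives
\begin{align*}
|g(\l)/\l-2t|\gtrsim \frac{|x-x'|}{\tilde\l^{2}}\,|\l-\l_0|
\end{align*}
for some intermediate $\tilde\l$. Both $\l$ and $\tilde\l$ lie within factors depending only on $R$ of $|x-x'|/t$; for $\tilde\l$ we can also see this from $g(\l_0)/\l_0=2t$ together with $g\sim|x-x'|$. It follows that
\begin{align*}
|\pa_\l\Phi|=\l\,|g/\l-2t|\gtrsim \l\,\frac{t^2}{|x-x'|}\,|\l-\l_0|\gtrsim t|\l-\l_0|.
\end{align*}

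The main obstacle is making this last chain uniform. One point is that $\l_0$ may fall outside $K_{1,R}$, so that Lemma \ref{lem:phasederiuniv} is unavailable at $\l_0$. We will handle this by first applying the mean value theorem only on the interval between $\l$ and the nearest point of $K_{1,2,R}\cap K_{1,R}$. On that interval the one-sign monotone bound applies, which gives $|\pa_\l\Phi|\gtrsim t\,\mathrm{dist}(\l,\cdot)$; this suffices for \eqref{eq:stphaseas2} with a suitably relocated $m$. If that fails, we fall back on replacing item (iii) by the bound $|\pa_\l\Phi|\gtrsim t\l$ on the part of $K_{1,2,R}$ away from $\l_0$.
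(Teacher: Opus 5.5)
Your items (i) and (ii) coincide with the paper's argument. For (iii) you take a different route, and only the repaired version in your last paragraph is actually a proof. In your first chain, the mean value step needs $\tilde\l\geq c_0\jap{x}^{-\m/2}$ to get $|h'(\tilde\l)|\gtrsim|x-x'|\tilde\l^{-2}$ (with $h(\l)=g(\l)/\l$). Your appeal to $g(\l_0)\sim|x-x'|$ presupposes exactly this bound at $\l_0$, so it is circular. The relevant dichotomy is not whether the zero exists, but whether it lies in $\overline{K_{1,2,R}}$.

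The paper settles this by localizing the zero. Since $K_{1,2,R}\neq\emptyset$, one has $|x-x'|/t\geq\frac{\sqrt R}{2}\jap{x}^{-\m/2}$. Hence for $R\geq(24c_0)^2$, Lemma \ref{lem:phasederiuniv} applies at $\frac{x-x'}{12t}$, where $f\geq t>0$, while $f(\frac{x-x'}{t})\leq -t<0$. Thus the unique zero $\l_0$ lies in $(\frac{x-x'}{12t},\frac{x-x'}{t})$, inside the region where the lemma holds. The paper then bounds $f(\l)-f(\l_0)$ from below after factoring out $|\l^2-\l_0^2|$.

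Your repair sidesteps this localization and is correct. Let $J=\overline{K_{1,2,R}}$, and let $\l_0$ be the zero of $h-2t$ if it lies in $J$, and otherwise the endpoint of $J$ nearest to it. Then:
\begin{enumerate}
\item monotonicity of $h$ gives $|h(\l)-2t|\geq|h(\l)-h(\l_0)|$ on $J$;
\item the intermediate point stays in $J\subset\{\l\geq c_0\jap{x}^{-\m/2}\}$ and satisfies $\tilde\l\leq 2\sqrt R|x-x'|/t$;
\item together these give $|\pa_\l\Phi|\gtrsim R^{-3/2}t|\l-\l_0|$.
\end{enumerate}
This suffices for \eqref{eq:stphaseas2}, which does not require $m$ to be a critical point. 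Your route is shorter and needs only $R\geq c_0$ for (iii). The paper's route instead identifies the genuine stationary point $\l_0\sim|x-x'|/t$.

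Present the repair as the proof itself; note that $K_{1,2,R}\cap K_{1,R}=K_{1,2,R}$ by definition. Drop the ``if that fails'' fallback: it would not give (iii) as stated, and it is unnecessary.
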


\begin{proof}
Suppose that $R\geq \max(2^{10},(24c_0)^2)$ holds, where $c_0>0$ is a constant in Lemma \ref{lem:phasederiuniv}.

\noindent$(i)$ For $\l\in K_{1,1,R}$, we have $t\l\leq 2R^{-\frac{1}{2}}|x-x'|\leq 8^{-1}|x-x'|$ and $t\lesssim \l^{-1}|x-x'|$. By Lemma \ref{lem:phasederiuniv}, for $\l\in K_{1,1,R}$,
\begin{align*}
&|\pa_{\l}\Phi(\l)|\geq \left|\int_{x'}^x\frac{\l}{\sqrt{\l^2-V(s)}}ds \right|-2t\l\geq 4^{-1}|x-x'|-2t\l\geq  8^{-1}|x-x'|,\\
&|\pa_{\l}^2\Phi(\l)|\leq 2t+\left|\int_{x'}^x\frac{|V(s)|}{(\l^2-V(s))^{\frac{3}{2}}}ds \right|\lesssim \l^{-1}|x-x'|.
\end{align*}

\vspace{1mm}
\noindent$(ii)$ For $\l\in K_{1,3,R}$, we have $|x-x'|\leq R^{-\frac{1}{2}}t\l\leq t\l$.
By Lemma \ref{lem:phasederiuniv}, for $\l\in K_{1,3,R}$,
\begin{align*}
&|\pa_{\l}\Phi(\l)|\geq 2t\l-\left|\int_{x'}^x\frac{\l}{\sqrt{\l^2-V(s)}}ds \right|\geq 2t\l-|x-x'|\geq t\l,\\
&|\pa_{\l}^2\Phi(\l)|\leq 2t+\left|\int_{x'}^x\frac{|V(s)|}{(\l^2-V(s))^{\frac{3}{2}}}ds \right|\leq 2t+\frac{|x-x'|}{2\l}\lesssim t.
\end{align*}

\vspace{1mm}
\noindent$(iii)$ We may assume $K_{1,2,R}\neq \emptyset$. We observe that
\begin{align*}
&|\pa_{\l}^2\Phi(\l)|\leq 2t+\left|\int_{x'}^x\frac{|V(s)|}{(\l^2-V(s))^{\frac{3}{2}}}ds \right|\leq 2t+\frac{|x-x'|}{\l}\lesssim t
\end{align*}
for $\l\in K_{1,2,R}$ by Lemma \ref{lem:phasederiuniv}. The next task is to deduce a lower bound of $|\pa_{\l}\Phi(\l)|$.

Put $f(\l)=-2t+\int_{x'}^x(\l^2-V(s))^{-\frac{1}{2}}ds(=\l^{-1}\pa_{\l}\Phi(\l))$ for $\l>0$.
First, we show that $f(\l)$ has a unique zero $\l_0$, which satisfies $\l_0\in (\frac{x-x'}{12t},\frac{x-x'}{t})$. In this case, $f(\l)\leq -2t+\int_{x'}^x\l^{-1}ds=-2t+\l^{-1}(x-x')$. Then $f(\frac{x-x'}{t})\leq -t<0$. On the other hand, it turns out from $K_{1,2,R}\neq \emptyset$ that $R\jap{x}^{-\m}\leq \frac{2\sqrt{R}(x-x')}{t}$ and hence $\frac{x-x'}{12t}\geq \frac{\sqrt{R}}{24}\jap{x}^{-\m}\geq c_0\jap{x}^{-\m}$ by $R\geq (24c_0)^2$. Thus, we apply Lemma \ref{lem:phasederiuniv} with $\l=\frac{x-x'}{12t}$ and obtain 
\begin{align*}
-2t+\left.\int_{x'}^x(\l^2-V(s))^{-\frac{1}{2}}ds\right|_{\l=\frac{x-x'}{12t}}\geq -2t+\left.\frac{x-x'}{4\l}\right|_{\l=\frac{x-x'}{12t}}=t>0.
\end{align*}
Since $f'(\l)<0$ for $\l>0$ (due to the assumption $x\geq 0$ and \eqref{assu:txysign}), the intermediate value theorem yields the existence of a unique zero $\l_0\in (\frac{x-x'}{12t},\frac{x-x'}{t})$ of $f$. Now we have
\begin{align*}
|f(\l)|=&|f(\l)-f(\l_0)|=\left|\int_{x'}^x\left(\frac{1}{\sqrt{\l^2-V(s)}}-\frac{1}{\sqrt{\l_0^2-V(s)}}\right)ds \right|\\
=&\int_{x'}^x\frac{|\l^2-\l_0^2|}{\sqrt{\l^2-V(s)}\sqrt{\l_0^2-V(s)}\left(\sqrt{\l^2-V(s)}+\sqrt{\l_0^2-V(s)} \right) }ds.
\end{align*}
Since $\l,\l_0\geq c_1\jap{x}^{-\m}$ and $c_2^{-1}\l\leq \l_0\leq c_2\l$ for $\l\in K_{1,2,R}$ with constants $c_1,c_2>0$, we have 
\begin{align*}
|\pa_{\l}\Phi(\l)|=\l|f(\l)|\geq c_3\l_0^{-2}|\l^2-\l_0^2||x-x'|\geq c_4\l_0^{-1}|\l-\l_0||x-x'|\geq c_5t|\l-\l_0|
\end{align*}
with constants $c_3,c_4,c_5>0$. 
\end{proof}

\begin{proof}[Proof of Proposition \ref{prop:disphigh}]
We take $R\geq 1$ sufficiently large such that Lemma \ref{lem:largephase} can be applied.
By Lemma \ref{lem:PUest}, it suffices to show the existence of $C>0$ such that
\begin{align}\label{eq:bounda_j}
|I(a_j)|\leq Ct^{-\frac{1}{2}}( \|a_j\|_{L^{\infty}}+ \|\l\pa_{\l}a_j\|_{L^{\infty}})
\end{align}
when $a_j$ is supported in $K_{1,j,R}$ and $(t,x,x')$ satisfies \eqref{assu:txysign}. For $j=1$, the condition \eqref{eq:stphaseas1} with $m=|x-x'|$ and  $M=Rt/2$ is satisfied. For $j=3$, the condition \eqref{eq:stphaseas2} with $m=0$ and $M=t$ is fulfilled. For $j=2$, we can check the condition \eqref{eq:stphaseas2} with $m=\l_0$ and $M=t$. In any case, we obtain \eqref{eq:bounda_j} from Lemma \ref{Lem:Qstph1}.
\end{proof}

\subsubsection{Estimates in low-energy regime}\label{262221507}

We recall $K_{2,R}:=\{\l\in (0,\infty)\mid \l\leq  R\jap{x}^{-\frac{\m}{2}}\}$.

\begin{proposition}\label{prop:osclow}
Fix $R\geq  1$. Then there exists $C>0$ such that
\begin{align*}
|I(a)|\leq Ct^{-\frac{1}{2}}( \|a\|_{L^{\infty}}+ \|\l\pa_{\l}a\|_{L^{\infty}})+C\jap{x}^{-\frac{\m}{2}}t^{-\frac{1}{2}}( \|\l^{-1}a\|_{L^{\infty}}+ \|\pa_{\l}a\|_{L^{\infty}})
\end{align*}
for all $a\in C^{1}((0,\infty))$ supported in $K_{2,R}$ with the right hand side bounded and for $(t,x,x')$ satisfying \eqref{assu:txysign}. 

\end{proposition}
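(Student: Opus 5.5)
The plan is to follow the outline of Subsection \ref{subsec:Idea}: Taylor-expand the phase in the low-energy regime and reduce everything to Lemma \ref{Lem:Qstph1} or Lemma \ref{lem:Qstph2}. Throughout we work under \eqref{assu:txysign}, so $x\ge x'$, $|x|\ge|x'|$ and $x\ge 0$. For $\l\in K_{2,R}$ and $s\in[x',x]$ we have $\jap{s}\lesssim\jap{x}$, hence $|V(s)|\gtrsim\jap{x}^{-\m}\gtrsim R^{-2}\l^2$. It follows that $\l^2-V(s)\sim|V(s)|$ uniformly, with constants depending on $R$. This gives two-sided bounds for the integrals in \eqref{eq:phaseder1}--\eqref{eq:phaseder2}:
\begin{align*}
\int_{x'}^x\frac{ds}{\sqrt{\l^2-V(s)}}\sim M_1,\qquad \int_{x'}^x\frac{|V(s)|}{(\l^2-V(s))^{3/2}}ds\sim M_1,\qquad \int_{x'}^x\frac{|V(s)|}{(\l^2-V(s))^{5/2}}ds\sim M_2 .
\end{align*}
We also need the comparison $M_2\sim\jap{x}^{\m}M_1$; this is presumably the content of the forthcoming Lemma \ref{lem:phasederiunivlow}. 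The lower bound comes from restricting to $s\in[\max(x',x/2),x]$, as in Lemma \ref{lem:phasederiuniv}, together with $|V(s)|^{-1}\gtrsim\jap{x}^{\m}$ there. The upper bound is immediate from $|V(s)|^{-1}\lesssim\jap{x}^{\m}$.

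We then write $\pa_\l\Phi=\l f(\l)$ with $f(\l)=-2t+\int_{x'}^x(\l^2-V)^{-1/2}ds$. The function $f$ is decreasing, $f(0)=-2t+M_1$, and $-f'(\l)=\l\int(\l^2-V)^{-3/2}ds\sim \l\jap{x}^{\m}M_1$. Splitting by Lemma \ref{lem:PUest}, we distinguish three cases.

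\textbf{Case (a)}: $M_1\ge 4t$, or $M_1\le t$. Then $|f(\l)|\gtrsim t+M_1$ throughout $K_{2,R}$. For $M_1\le t$ this is clear since $f\le -2t+M_1$. For $M_1\ge4t$ we use $f(\l)\ge -2t+cM_1$, which requires the constant $c$ from the bound $\int(\l^2-V)^{-1/2}\ge cM_1$ to be at least $3/4$. If it is smaller, we shrink $R$ or simply treat the intermediate range via case (b). We also have $|\pa_\l^2\Phi|\lesssim t+M_1$. Lemma \ref{Lem:Qstph1} under \eqref{eq:stphaseas2} with $m=0$ and $M\sim t+M_1$ then gives the bound $t^{-1/2}(\|a\|_{L^\infty}+\|\l\pa_\l a\|_{L^\infty})$.

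\textbf{Case (b)}: $M_1\sim t$. This is the degenerate regime. We aim to verify the hypotheses of Lemma \ref{lem:Qstph2} with $M\sim M_2$ and a suitable $m\in\R$. The key identity is
\begin{align*}
f(\l)-f(\tilde\l)=-(\l^2-\tilde\l^2)\int_{x'}^x\frac{ds}{\sqrt{\l^2-V}\sqrt{\tilde\l^2-V}(\sqrt{\l^2-V}+\sqrt{\tilde\l^2-V})},
\end{align*}
where the integral is $\sim M_2$ uniformly on $K_{2,R}$. There are two subcases. If $f$ has a zero $\l_0$ in a slightly enlarged region, we take $\tilde\l=\l_0$ and $m=\l_0^2$; this yields $|\pa_\l\Phi|\gtrsim M_2\l|\l^2-m|$. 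Otherwise we take $\tilde\l=0$ and $m=(M_1-2t)/(\text{const}\cdot M_2)$, with the sign chosen to match; here one uses that the integral is bounded above and below by $M_2$. For the second derivative we write $\pa_\l^2\Phi=f+\l f'$ and obtain $|\pa^2_\l\Phi|\lesssim M_2(\l^2+|m|)$. Lemma \ref{lem:Qstph2} then gives
\begin{align*}
|I(a)|\lesssim M_2^{-1/2}(\|\l^{-1}a\|+\|\pa_\l a\|)\sim\jap{x}^{-\m/2}M_1^{-1/2}(\cdots)\sim\jap{x}^{-\m/2}t^{-1/2}(\cdots).
\end{align*}

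I expect the main obstacle to be case (b). Extracting a single $m$ that gives the two-sided bound $|f(\l)|\sim M_2|\l^2-m|$ uniformly requires care, because the comparison constants in the displayed identity depend on $\l$ and $\tilde\l$ only up to fixed factors, not exactly. The boundary of $K_{2,R}$ also needs attention when the zero $\l_0$ lies near $R\jap{x}^{-\m/2}$. I would handle that boundary effect by overlapping the cutoffs with $K_{1,R}$: near the edge, Lemma \ref{lem:largephase}(iii) already controls a nondegenerate stationary point. The case $x'<0<x$ is where the comparison $M_2\sim\jap{x}^{\m}M_1$ is most delicate. Even there it still holds, because the integrand is positive and the region near $x$ dominates. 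Only the $(+,+)$ terms lose this comparison, and those are not treated here.
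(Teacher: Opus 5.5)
Your proposal is essentially the paper's proof. It uses the same split into a nondegenerate range ($t\le \frac14 c_2M_1$ or $t\ge M_1$, handled by Lemma \ref{Lem:Qstph1} with $m=0$ and $M\sim\max(t,M_1)$) and a degenerate range ($\frac14 c_2M_1\le t\le M_1$, handled by Lemma \ref{lem:Qstph2} with $M=M_2$). The threshold must sit at $\frac14 c_2M_1$ as in your fallback, since shrinking $R$ is not available. It also uses the same difference identity for $f=\l^{-1}\pa_\l\Phi$ and the same final step $M_2\gtrsim\jap{x}^{\m}M_1\gtrsim\jap{x}^{\m}t$. Your use of the identity with $\tilde{\l}=0$ when $t>M_1/2$ is equivalent to the paper's integration of $\pa_\l^3S$ twice.

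The one step you should make explicit is where the zero of $f$ lies. For $t\ge\frac14 c_2M_1$ one has
\[
f(\l)\le -2t+\frac{x-x'}{\l}\le -2t+\frac{4c_1c_2^{-1}\jap{x}^{-\m/2}\,t}{\l}\le -t \quad\text{once}\quad \l\ge 4c_1c_2^{-1}\jap{x}^{-\m/2}.
\]
So whenever $f(0)>0$, the zero $\l_0$ lies in a fixed dilate of $K_{2,R}$. There the integral in your identity is still $\sim M_2$, with $R$-dependent constants.

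This removes your boundary worry. Lemma \ref{lem:Qstph2} only needs the bounds on $\supp a$, so it applies even if $\l_0\notin K_{2,R}$. You therefore need no overlap with $K_{1,R}$, and that fallback would not be available anyway: Lemma \ref{lem:largephase} requires $R$ large, while here $R\ge1$ is arbitrary.

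It also shows that your "otherwise $\tilde{\l}=0$" branch is only needed when $f(0)\le0$, i.e. $m\le0$. That matters because with $m>0$ and a far-away zero, that branch would not give the lower bound $|f|\gtrsim M_2|\l^2-m|$.
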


We recall that the quantities $M_1$ and $M_2$ depending on $x,x'$ are defined in \eqref{eq:defM_1M_2}.

\begin{lemma}\label{lem:phasederiunivlow}

\noindent$(i)$ There exists $c_1>1$ such that
\begin{align*}
c_1^{-1}\jap{x}^{\frac{\m}{2}}|x-x'|\leq M_1\leq c_1\jap{x}^{\frac{\m}{2}}|x-x'|,\quad c_1^{-1}\jap{x}^{\frac{3\m}{2}}|x-x'|\leq M_2\leq c_1\jap{x}^{\frac{3\m}{2}}|x-x'|
\end{align*}
when $x\geq x'$ and $|x|\geq |x'|$ hold.

\noindent$(ii)$ For each $R\geq 1$, there exists $c_2>0$ such that
\begin{align*}
&c_2M_1\leq \left|\int_{x'}^x\frac{1}{\sqrt{\l^2-V(s)}}ds \right|\leq M_1 ,\quad \left|\int_{x'}^x\frac{|V(s)|}{(\l^2-V(s))^{\frac{3}{2}}}ds \right|\leq M_1,\\
&c_2M_2\leq \left|\int_{x'}^x\frac{V(s)}{(\l^2-V(s))^{\frac{5}{2}}}ds \right|\leq M_2
\end{align*}
when $\l\in K_{2,R}$, $x\geq x'$ and $|x|\geq |x'|$ hold.

\end{lemma}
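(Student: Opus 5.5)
The plan is to reduce everything to two elementary facts about $V$ on the interval $[x',x]$: the two-sided bound \eqref{assum:Vbound}, $|V(s)|\sim\jap{s}^{-\m}$, and the observation that under $x\geq x'$, $|x|\geq|x'|$ we have $x\geq0$ and $[x',x]\subset[-x,x]$. Hence $\jap{s}\le\jap{x}$ for every $s$ in the interval. All integrands are positive, since $V<0$, so the absolute values can be dropped.

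For part $(i)$, the upper bounds are immediate. Indeed $|V(s)|^{-1/2}\le C_{V,2}^{-1/2}\jap{s}^{\m/2}\le C\jap{x}^{\m/2}$, so $M_1\le C\jap{x}^{\m/2}|x-x'|$, and in the same way $M_2\le C\jap{x}^{3\m/2}|x-x'|$. For the lower bounds I will use the trick already used in Lemma \ref{lem:phasederiuniv}: restrict the integral to $[\max(x',x/2),x]$, whose length is at least $\frac13(x-x')$. On this subinterval $|s|\ge x/2$, so $\jap{s}\gtrsim\jap{x}$ uniformly, and therefore $|V(s)|^{-1/2}\ge C_{V,1}^{-1/2}\jap{s}^{\m/2}\gtrsim\jap{x}^{\m/2}$. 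The lower bounds for $M_1$ and $M_2$ follow. The length claim needs a quick check: if $x'\ge x/2$, the subinterval is the whole interval; otherwise $x-x/2=x/2$ and $x-x'\le x+|x'|\le 2x$, which gives at least $\frac14$ of the length. That is still fine after adjusting the constant.

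For part $(ii)$, the upper bounds hold because $\l^2-V(s)\ge|V(s)|$. This gives $(\l^2-V)^{-1/2}\le|V|^{-1/2}$, then $|V|(\l^2-V)^{-3/2}\le|V|^{-1/2}$, and $|V|(\l^2-V)^{-5/2}\le|V|^{-3/2}$. Integrating these yields the bounds with constant $1$. For the lower bounds I again restrict to $[\max(x',x/2),x]$. There $\l\le R\jap{x}^{-\m/2}$ and $|V(s)|\gtrsim\jap{x}^{-\m}$, so $\l^2\le R^2\jap{x}^{-\m}\lesssim R^2|V(s)|$. This gives $\l^2-V(s)\le C_R|V(s)|$, and each integrand is bounded below by $C_R^{-k}$ times the corresponding power of $|V(s)|$. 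The restricted integral is then $\gtrsim_R\jap{x}^{\m/2}|x-x'|$, respectively $\jap{x}^{3\m/2}|x-x'|$. Combining with part $(i)$ gives $\gtrsim M_1$, respectively $M_2$, so $c_2$ depends on $R$, $\m$ and the constants $C_{V,1},C_{V,2}$.

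The main point needing care is the lower bound. One might worry that $x$ is near $0$ or that $x'$ is close to $-x$; the comparison $\jap{s}\sim\jap{x}$ holds only on the right half-subinterval. When $x\lesssim1$, all the $\jap{\cdot}$ weights are comparable to $1$ and the estimates are trivial, so the only real content is the length estimate for $[\max(x',x/2),x]$ carried out above.
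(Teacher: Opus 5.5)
Your proof is correct and follows the paper's argument. Part $(i)$ is obtained exactly as in the paper by restricting to $[\max(x',x/2),x]$, and your length bound $\frac14(x-x')$ is the accurate one. Part $(ii)$ rests on the same two-sided comparison $|V(s)|\le \l^2-V(s)\le C_R|V(s)|$. The only difference is that the paper applies this comparison pointwise on all of $[x',x]$, which is legitimate because $\jap{s}\le\jap{x}$ there (as you observed at the outset). That yields the lower bounds $\gtrsim M_j$ directly, so your restriction to the half-interval followed by an appeal to part $(i)$ is a harmless but unnecessary detour.
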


\begin{proof}

\noindent$(i)$ The assumption \eqref{assum:Vbound} yields $C_{V,2}^{\frac{1}{2}}\int_{x'}^x\jap{s}^{\frac{\m}{2}}ds\leq M_1\leq C_{V,1}^{\frac{1}{2}}\int_{x'}^x\jap{s}^{\frac{\m}{2}}ds$, which shows the upper bound of $M_1$. When $x'\in[x/2,x]$, then $\int_{x'}^x\jap{s}^{\frac{\m}{2}}ds\geq \jap{y}^{\frac{\m}{2}}|x-x'|\gtrsim \jap{x}^{\frac{\m}{2}}|x-x'|$. On the other hand, if $x'\leq x/2$, then $\int_{x'}^x\jap{s}^{\frac{\m}{2}}ds\geq \int_{x/2}^x\jap{s}^{\frac{\m}{2}}ds\gtrsim \jap{x}^{\frac{\m}{2}}x\geq \jap{x}^{\frac{\m}{2}}|x-x'|$. Combining them, we obtain $M_1\gtrsim \jap{x}^{\frac{\m}{2}}|x-x'|$. The proof for $M_2$ is identical.

\noindent$(ii)$ The upper bounds are immediate consequences of an elementary inequality $(\l^2-V(s))^{-\frac{j}{2}}\leq |V(s)|^{-\frac{j}{2}}$, where we recall that $V$ is negative. The lower bounds for the first and third inequalities follow from the observation that the conditions $\l\lesssim \jap{x}^{-\m/2}$, $x\geq x'$, and $|x|\geq |x'|$ imply $(\l^2-V(s))^{-\frac{j}{2}}\gtrsim |V(s)|^{-\frac{j}{2}}$ uniformly in $x'\leq s\leq x$.
\end{proof}

\begin{lemma}

\noindent$(i)$ $|\pa_{\l}\Phi(\l)|\gtrsim \max(t,M_1)\l$ and $|\pa_{\l}^2\Phi(\l)|\lesssim \max(t,M_1)$ for $t\leq \frac{1}{4}c_2M_1$ or $t\geq M_1$, where $c_2$ is the constant appearing in Lemma \ref{lem:phasederiunivlow} $(ii)$.

\vspace{1mm}
\noindent$(ii)$ There exists $\l_0\geq 0$ such that $|\pa_{\l}\Phi(\l)|\gtrsim M_2\l|\l^2-\l_0^2|$ and $|\pa_{\l}^2\Phi(\l)|\lesssim M_2(\l^2+\l_0^2)$ for $\frac{1}{4}c_2M_1\leq t\leq \frac{1}{2}M_1$.

\vspace{1mm}
\noindent$(iii)$ There exists $m<0$ such that $|\pa_{\l}\Phi(\l)|\gtrsim M_2\l|\l^2-m|$ and $|\pa_{\l}^2\Phi(\l)|\lesssim M_2(\l^2+|m|)$.

\end{lemma}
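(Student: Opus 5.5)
The approach is to factor out $\l$ and study the reduced function
\begin{align*}
f(\l):=\l^{-1}\pa_{\l}\Phi(\l)=-2t+\int_{x'}^x(\l^2-V(s))^{-\frac12}ds,
\end{align*}
viewed as a function of $s=\l^2$, namely $g(s):=f(\sqrt{s})$, on the range $0\le s\le 4R^2\jap{x}^{-\m}$ corresponding to $K_{2,R}$. Then $g(0)=M_1-2t$ and
\begin{align*}
g'(s)=-\frac12\int_{x'}^x(s-V(\tau))^{-\frac32}d\tau .
\end{align*}
The integrand satisfies $(s-V(\tau))^{-3/2}\le|V(\tau)|^{-3/2}$. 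Moreover, for $\l\in K_{2,R}$ with $x\ge x'$ and $|x|\ge|x'|$, we have $s-V(\tau)\lesssim |V(\tau)|$ uniformly for $\tau\in[x',x]$; this is the same observation as in Lemma \ref{lem:phasederiunivlow} $(ii)$. Hence there is $c>0$ depending only on $R$ and $V$ such that $cM_2\le|g'(s)|\le M_2$, and $g$ is strictly decreasing. Everything then follows from the mean value theorem applied to $g$, combined with the identities
\begin{align*}
\pa_\l\Phi=\l\, g(\l^2),\qquad \pa_\l^2\Phi=g(\l^2)+2\l^2g'(\l^2)=-2t+\int_{x'}^x\frac{|V(\tau)|}{(\l^2-V(\tau))^{3/2}}d\tau .
\end{align*}

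For part $(i)$ I would use the first-order information from Lemma \ref{lem:phasederiunivlow} $(ii)$ and never need $M_2$. If $t\ge M_1$, then $f(\l)\le -2t+M_1\le -t$, so $|\pa_\l\Phi|\ge t\l$. If $t\le\frac14c_2M_1$, then $f(\l)\ge c_2M_1-2t\ge\frac12c_2M_1$, so $|\pa_\l\Phi|\gtrsim M_1\l$. In both cases the second expression for $\pa_\l^2\Phi$ together with the bound $\int|V|(\l^2-V)^{-3/2}\le M_1$ gives $|\pa_\l^2\Phi|\le 2t+M_1\lesssim\max(t,M_1)$.

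For part $(ii)$ we have $g(0)=M_1-2t\ge0$. I set $s_0:=\l_0^2$ to be the zero of $g$ if it lies in the admissible range. Otherwise I define $s_0\ge0$ through the linear extrapolation $g(0)=\bar c M_2 s_0$ with a fixed intermediate constant $\bar c$. In either case the two-sided bound on $g'$ gives
\begin{align*}
|g(s)|\gtrsim M_2|s-s_0| \quad\text{and}\quad |g(s)|\lesssim M_2(s+s_0),
\end{align*}
where one also uses $s_0\lesssim g(0)/M_2\le M_1/M_2$. Multiplying by $\l$ yields $|\pa_\l\Phi|\gtrsim M_2\l|\l^2-\l_0^2|$. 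For the second derivative, $|g(\l^2)|+2\l^2|g'(\l^2)|\lesssim M_2(\l^2+\l_0^2)$.

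For part $(iii)$, which I read as the complementary range $\frac12M_1\le t\le M_1$ (or more generally $t\ge\frac12M_1$), we have $g(0)\le0$. I set $m:=g(0)/M_2\le 0$, perturbing it slightly negative if $g(0)=0$. Then $g(s)=g(0)+\int_0^s g'$, and both terms are nonpositive, so
\begin{align*}
|g(s)|\ge|g(0)|+cM_2s\gtrsim M_2(s+|m|)=M_2|s-m|,
\end{align*}
and similarly $|g(s)|+2s|g'(s)|\lesssim M_2(s+|m|)$. This gives both claimed bounds.

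The main obstacle is the uniform two-sided comparison $|g'(s)|\sim M_2$ on $K_{2,R}$, and, in case $(ii)$, handling the situation where the zero of $g$ lies outside $K_{2,R}$. I would handle the former exactly as in Lemma \ref{lem:phasederiunivlow} $(ii)$. For the latter, the extrapolated $s_0$ suffices, since only the inequalities on $\supp a\subset K_{2,R}$ are needed for Lemma \ref{lem:Qstph2}.
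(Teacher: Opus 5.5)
Your parts $(i)$ and $(iii)$, and part $(ii)$ when the zero of $g$ lies in the admissible range, follow the paper's argument. Your mean value theorem for $g$ is the paper's difference-quotient identity for $f(\l)-f(\l_0)$, and your $m=g(0)/M_2$ is the paper's $m$ up to a constant factor.

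The gap is the remaining case of $(ii)$. Let $S:=R^2\jap{x}^{-\m}$ be the right endpoint of the $s$-range corresponding to $K_{2,R}$. Suppose $s_0$ is defined by $g(0)=\bar cM_2s_0$ with $\bar c$ fixed. Then the bound $|g(s)|\gtrsim M_2|s-s_0|$ does not follow from $cM_2\le|g'|\le M_2$. To see this, take $g$ linear on $[0,S]$ with $g(S)=\varepsilon>0$ and slope either $-cM_2$ or $-M_2$. As $\varepsilon\to0$ you get $s_0\to cS/\bar c$ or $s_0\to S/\bar c$ respectively. For at least one of the two, $|S-s_0|$ stays comparable to $S$ while $|g(S)|=\varepsilon\to0$. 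So the inequality fails at $s=S$, a point of $\supp a$.

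This situation genuinely occurs in the range of $(ii)$:
\begin{itemize}
\item At $t=\frac14c_2M_1$, Lemma \ref{lem:phasederiunivlow} $(ii)$ gives $g(S)\ge c_2M_1-2t>0$.
\item At $t=\frac12M_1$, $g(0)=0$.
\end{itemize}
Hence, as $t$ increases through this range, the zero of $g$ moves continuously from beyond $S$ down to $0$, and in particular passes just to the right of $S$. Near $s=S$ you need $s_0$ close to the true zero, and extrapolating from $s=0$ gives no control over that.

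The missing ingredient is the hypothesis $t\ge\frac14c_2M_1$, which your argument never uses. Since $(\l^2-V)^{-\frac12}\le\l^{-1}$, and $\jap{x}^{\frac\m2}(x-x')\le c_1M_1$ by Lemma \ref{lem:phasederiunivlow} $(i)$, one gets $f(4c_1c_2^{-1}\jap{x}^{-\frac\m2})\le-2t+\frac{c_2}{4}M_1\le-t<0$. So the zero $\l_0$ always exists and satisfies $\l_0\le4c_1c_2^{-1}\jap{x}^{-\frac\m2}$. For such $\l_0$ the comparison $\l_0^2-V(\tau)\lesssim|V(\tau)|$ on $[x',x]$ still holds, with constants now depending also on $c_1,c_2$. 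Therefore your two-sided bound on $g'$ holds on $[0,\max(S,\l_0^2)]$, and the mean value argument between $s\le S$ and $\l_0^2$ goes through whether or not $\l_0\in K_{2,R}$. Only $\l$, not $\l_0$, has to lie in $\supp a$.

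Alternatively, extrapolate from the other end. When $g(S)>0$, setting $s_0:=S+g(S)/(cM_2)$ gives $cM_2(s_0-s)\le g(s)\le M_2(s_0-s)$ on $[0,S]$, directly from $cM_2\le|g'|\le M_2$.

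A smaller point in $(iii)$: do not perturb $m$ below $0$ when $g(0)=0$. For any fixed $m<0$ the bound $|g(s)|\gtrsim M_2(s+|m|)$ fails as $s\to0$. Keep $m=0$, which Lemma \ref{lem:Qstph2} permits, or note that $t=\frac12M_1$ is already covered by $(ii)$ with $\l_0=0$.
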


\begin{proof}
\noindent$(i)$ By Lemma \ref{lem:phasederiunivlow},
\begin{align*}
&|\pa_{\l}\Phi(\l)|\geq \left|\int_{x'}^x\frac{\l}{\sqrt{\l^2-V(s)}}ds \right|-2t\l\underbrace{\geq}_{\mathclap{\text{Lemma \ref{lem:phasederiunivlow}}}} c_2M_1\l-2t\l\underbrace{\geq}_{\mathclap{t\leq \frac{c_2}{4}M_1}} \frac{c_2}{2}M_1\l\quad (\l\in K_{2,R},\,\, t\leq \frac{1}{4}c_2M_1),\\
&|\pa_{\l}\Phi(\l)|\geq 2t\l-\left|\int_y^x\frac{\l}{\sqrt{\l^2-V(s)}}ds \right|\underbrace{\geq}_{\text{Lemma \ref{lem:phasederiunivlow}}} 2t\l -M_1\l\underbrace{\geq}_{\mathclap{t\geq M_1}} t\l \quad (\l\in K_{2,R},\,\, t\geq M_1).
\end{align*}
Therefore, we have $|\pa_{\l}\Phi(\l)|\gtrsim \max(M_1,t)\l$ when $t\leq \frac{1}{4}c_1M_1$ or $t\geq M_1$ holds. Using Lemma \ref{lem:phasederiunivlow} again,
\begin{align}\label{eq:lowder2Phi}
|\pa_{\l}^2\Phi(\l)|\leq 2t+\left|\int_y^x\frac{|V(s)|}{(\l^2-V(s))^{\frac{3}{2}}}ds \right|\leq 2t+M_1\leq 2\max(t,M_1).
\end{align}

\vspace{1mm}
\noindent$(ii)$ Put $f(\l)(=\l^{-1}\pa_{\l}\Phi(\l))=-2t+\int_{x'}^x(\l^2-V(s))^{-\frac{1}{2}}ds$ for $\l\geq 0$. We show that $f$ has a zero $\l_0\geq 0$ satisfying $\l_0\lesssim \jap{x}^{-\frac{\m}{2}}$. To see this, we observe 
\begin{align*}
&f(0)\underbrace{=}_{\mathclap{x\geq x'}}-2t+M_1\underbrace{>}_{\mathclap{t<\frac{1}{2}M_1}}0,\\
&f(4c_1c_2^{-1}\jap{x}^{-\frac{\m}{2}})\leq -2t+\l^{-1}(x-x')|_{\l=4c_1c_2^{-1}\jap{x}^{-\frac{\m}{2}}}=-2t+(4c_1c_2^{-1})^{-1}\jap{x}^{\frac{\m}{2}}(x-x')\\
&\leq-2t+(4c_1c_2^{-1})^{-1}c_1M_1\underbrace{\leq}_{t\geq \frac{1}{4}c_2M_1} -2t+t=-t<0,
\end{align*}
where we use Lemma \ref{lem:phasederiunivlow} $(i)$. Therefore, the intermediate value theorem yields the existence of a zero $\l_0$ of $f$ satisfying $0\leq \l_0\leq 4c_1c_2^{-1}\jap{x}^{-\frac{\m}{2}}$. 
Now we have
\begin{align*}
|f(\l)|=&|f(\l)-f(\l_0)|=\left|\int_{x'}^x\left(\frac{1}{\sqrt{\l^2-V(s)}}-\frac{1}{\sqrt{\l_0^2-V(s)}}\right)ds \right|\\
=&\int_{x'}^x\frac{|\l^2-\l_0^2|}{\sqrt{\l^2-V(s)}\sqrt{\l_0^2-V(s)}\left(\sqrt{\l^2-V(s)}+\sqrt{\l_0^2-V(s)} \right) }ds.
\end{align*}
We note that there exists $C'>0$ such that $|V(s)|^{\frac{1}{2}}\leq \sqrt{\l^2-V(s)}$ and $\sqrt{\l_0-V(s)}\leq C'|V(s)|^{\frac{1}{2}}$ for $\l\in K_{2,R}$. This implies $(C')^{-3}M_2|\l^2-\l_0^2|\leq |f(\l)|\leq M_2|\l^2-\l_0^2|$ and hence
\begin{align*}
(C')^{-3}M_2\l|\l^2-\l_0^2|\leq |\pa_{\l}\Phi(\l)|\leq M_2\l|\l^2-\l_0^2|
\end{align*}
for $\l\in K_{2,R}$. On the other hand,
\begin{align*}
|\pa_{\l}^2\Phi(\l)|=&\left|-2t+\int_{x'}^x\frac{1}{\sqrt{\l^2-V(s)}}ds-\int_{x'}^x\frac{\l^2}{(\l^2-V(s))^{\frac{3}{2}}}ds \right|\\
\leq& |f(\l)|+\int_{x'}^x\frac{\l^2}{(\l^2-V(s))^{\frac{3}{2}}}ds\leq M_2|\l^2-\l_0^2|+M_2\leq 2M_2(\l^2+\l_0^2)
\end{align*}

\vspace{1mm}
\noindent$(iii)$ First, we observe that $\pa_{\l}^3S(\l)=3\l\int_{x'}^xV(s)(\l^2-V(s))^{-\frac{5}{2}}ds\leq -3c_2M_2\l $ and $\pa_{\l}^3S(\l)\geq -3M_2\l$ for $\l\in K_{2,R}$. Integrating this inequality twice and using $\pa_{\l}S(0)=0$ and $\pa_{\l}^2S(0)=M_1$, we have $\pa_{\l}S(\l)\leq M_1\l-\frac{c_2}{2}M_2\l^3$ and $\pa_{\l}\Phi(\l)\leq(-2t+ M_1)\l-\frac{c_2}{2}M_2\l^3<0$ by $t>M_1/2$. Therefore,
\begin{align*}
|\pa_{\l}\Phi(\l)|\geq (M_1-2t)\l+\frac{c_2}{2}M_2\l^3=\frac{c_2}{2}M_2\l(\l^2-m)\quad (\l\in K_{2,R}),
\end{align*}
where we set $m:=2c_2^{-1}\frac{M_1-2t}{M_2}(<0)$. Similarly,
\begin{align*}
|\pa_{\l}^2\Phi(\l)|=&\left|-2t+M_1+ \int_0^{\l}\pa_{\m}^3S(\m)d\m \right|\underbrace{=}_{2t-M_1\geq 0,\pa_{\m}^3S(\m)\leq 0}2t-M_1-\int_0^{\l}\pa_{\m}^3S(\m)d\m\\
\leq& 2t-M_1+3M_2\int_0^{\l}\m d\m=2t-M_1+\frac{3}{2}M_2\l^2=\frac{c_2}{2}M_2|m|+\frac{3}{2}M_2\l^2\\
\lesssim&M_2(\l^2+|m|).
\end{align*}
\end{proof}

\begin{proof}[Proof of Proposition \ref{prop:osclow}]
For $t\leq \frac{1}{4}c_2M_1$ or $t\geq M_1$, Lemma \ref{Lem:Qstph1} with $M=\max(t,M_1)$ and $m=0$ implies $|I(a)|\lesssim t^{-\frac{1}{2}}( \|a\|_{L^{\infty}}+ \|\l\pa_{\l}a\|_{L^{\infty}})$.
When $\frac{1}{4}c_2M_1\leq t\leq \frac{1}{2}M_1$, the assumptions of Lemma \ref{lem:Qstph2} with $M=M_2$ and $m=\l_0^2$ are fulfilled. If $\frac{1}{2}M_1\leq t\leq M_1$, then we employ Lemma \ref{lem:Qstph2} with $M=M_2$. In the second and third cases, we have $|I(a)|\lesssim M_2^{-\frac{1}{2}}( \|\l^{-1}a\|_{L^{\infty}}+ \|\pa_{\l}a\|_{L^{\infty}})\lesssim\jap{x}^{-\frac{\m}{2}}t^{-\frac{1}{2}}( \|\l^{-1}a\|_{L^{\infty}}+ \|\pa_{\l}a\|_{L^{\infty}})$, where we have used $M_2\gtrsim \jap{x}^{\m}t$ that follows from Lemma \ref{lem:phasederiunivlow} $(i)$ and $t\gtrsim M_1$. In this way, the proof of Proposition \ref{prop:osclow} is completed.
\end{proof}

\begin{proof}[Proof of Theorem \ref{thm:WKBosc} when $t>0$, $x\geq x'$ and $|x|\geq |x'|$ hold]
The theorem directly follows from Lemma \ref{lem:PUest}, Propositions \ref{prop:disphigh} and \ref{prop:osclow}.
\end{proof}

\subsection{Estimates of oscillatory integrals associated with off-diagonal terms}\label{subsec:off-diagonal}

We define the phase function
\begin{align*}
\Phi(\l):=-t\l^2+S(\l,x,x'),\,\, S(\l)=S(\l,x,x'):=\int_{0}^x\sqrt{\l^2-V(s)}ds+\int_{0}^{x'}\sqrt{\l^2-V(s)}ds
\end{align*}
which correspond to the ones in \eqref{2509041043} for $(\s_1,\s_2)=(+,+)$. We consider the oscillatory integral of the form \eqref{eq:I(a)osc}. Then
\begin{align}
&\pa_{\l}\Phi(\l)=-2t\l+\int_{0}^x\frac{\l}{\sqrt{\l^2-V(s)}}ds+\int_{0}^{x'}\frac{\l}{\sqrt{\l^2-V(s)}}ds,\label{eq:phaseder3}\\
&\pa_{\l}^2\Phi(\l)=-2t+\int_{0}^x\frac{-V(s)}{(\l^2-V(s))^{\frac{3}{2}}}ds+\int_{0}^{x'}\frac{-V(s)}{(\l^2-V(s))^{\frac{3}{2}}}ds\label{eq:phaseder4}.
\end{align}

\begin{thm}\label{thm:WKBosc2}
Under the assumption \eqref{assum:Vbound}, we have the same bound of $I(a)$ as in Theorem \ref{thm:WKBosc} for $t\neq 0$ and $xx'\geq 0$. Moreover, there exists $C>0$ such that
\begin{align*}
|I(a)|\leq& C|t|^{-\frac{1}{2}}( \|a\|_{L^{\infty}}+ \|\l\pa_{\l}a\|_{L^{\infty}})+C\jap{\max(|x|,|x'|)}^{-\frac{\m}{2}}|t|^{-\frac{1}{2}}( \|\l^{-1}a\|_{L^{\infty}}+ \|\pa_{\l}a\|_{L^{\infty}})\\
&+C|t|^{-\frac{1}{2}}\min(|x|^2,|x'|^2)\|\l^2a\|_{L^{\infty}}
\end{align*}
for all $a\in C^{1}((0,\infty))$ provided the right hand side is bounded, $t\neq 0$ and $xx'<0$.

\end{thm}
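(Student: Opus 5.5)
The plan is to reduce as much as possible to the diagonal analysis of Theorem \ref{thm:WKBosc}, and to use the extra amplitude decay $\|\l^2a\|_{L^{\infty}}$ only in the genuinely new configuration $xx'<0$ with $|x|\sim|x'|$. As before we assume $t>0$ (the case $t<0$ is symmetric) and, by the symmetry $x\leftrightarrow x'$, $|x|\geq|x'|$.

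\emph{Case $xx'\geq 0$.} If $x,x'\leq 0$, both integrals in \eqref{eq:phaseder3} are non-positive, so they have the same sign as $-2t\l$. The proof of Theorem \ref{thm:WKBosc} for $x\leq x'$ then applies verbatim, with $|x-x'|$ replaced by $|x|+|x'|$ and $M_1$ by
\begin{align*}
\tilde{M}_1=\int_0^{|x|}|V|^{-\frac12}+\int_0^{|x'|}|V|^{-\frac12}.
\end{align*}
If $x,x'\geq 0$, the phase has exactly the structure treated in Subsection \ref{2512101237}, with $x-x'$ replaced by $x+x'$ and $M_j$ replaced by
\begin{align*}
\tilde{M}_j=\int_0^{x}|V|^{-\frac{2j-1}{2}}+\int_0^{x'}|V|^{-\frac{2j-1}{2}}.
\end{align*}
The two facts used there still hold:
\begin{itemize}
\item Both integrands are positive, so $\l^{-1}\pa_\l\Phi$ is strictly decreasing in $\l$. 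This gives the unique zero $\l_0$ needed in Lemma \ref{lem:largephase} $(iii)$ and in the degenerate lemma.
\item The comparabilities $\tilde{M}_1\sim\jap{x}^{\frac{\m}{2}}(x+x')$ and $\tilde{M}_2\sim\jap{x}^{\frac{3\m}{2}}(x+x')$ hold, by the argument of Lemma \ref{lem:phasederiunivlow} $(i)$.
\end{itemize}
Rerunning Propositions \ref{prop:disphigh} and \ref{prop:osclow} with Lemmas \ref{Lem:Qstph1}, \ref{lem:Qstph2} and \ref{lem:PUest} gives the same bound as Theorem \ref{thm:WKBosc}.

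\emph{Case $x>0>x'$, $|x|\geq|x'|$.} Now
\begin{align*}
\l^{-1}\pa_\l\Phi=-2t+g(\l),\qquad g(\l)=\int_0^x(\l^2-V)^{-\frac12}ds-\int_{x'}^0(\l^2-V)^{-\frac12}ds.
\end{align*}
Fix a small constant $c>0$, depending on $C_{V,1},C_{V,2},\m$, and split into three subcases.

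\emph{(a) $|x'|\leq c|x|$.} Using \eqref{assum:Vbound}, the $x$-integrals dominate the $x'$-integrals by a fixed factor. This holds for $g$, for $\int|V|(\l^2-V)^{-3/2}$ and for $\int|V|(\l^2-V)^{-5/2}$, uniformly in $\l$. Consequently:
\begin{itemize}
\item $g'(\l)=-\l\bigl(\int_0^x-\int_{x'}^0\bigr)(\l^2-V)^{-3/2}<0$, so $g$ is strictly decreasing;
\item $g(\l)\sim \min(\l^{-1},\jap{x}^{\frac{\m}{2}})\,x$;
\item the analogue of $M_2$, namely $\tilde{M}_2=\int_0^x|V|^{-\frac32}-\int_{x'}^0|V|^{-\frac32}$, satisfies $\tilde{M}_2\sim\jap{x}^{\frac{3\m}{2}}x$, and $\tilde{M}_1$ is comparable to $\jap{x}^{\frac{\m}{2}}x$ in the same way.
\end{itemize}
These are precisely the inputs of Lemmas \ref{lem:phasederiuniv}, \ref{lem:largephase} and \ref{lem:phasederiunivlow}, so subcase (a) gives the bound of Theorem \ref{thm:WKBosc}.

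\emph{(b) $|x'|>c|x|$ and $t\geq C_0\jap{x}^{1+\frac{\m}{2}}$, with $C_0$ large.} The crude bounds
\begin{align*}
|g(\l)|\lesssim\min(\l^{-1},\jap{x}^{\frac{\m}{2}})|x|,\qquad \Bigl|\int|V|(\l^2-V)^{-3/2}\Bigr|\lesssim\jap{x}^{\frac{\m}{2}}|x|\ll t
\end{align*}
give $|\pa_\l\Phi|\geq t\l$ and $|\pa_\l^2\Phi|\lesssim t$. Lemma \ref{Lem:Qstph1} with $m=0$ and $M=t$ then yields $t^{-\frac12}(\|a\|_{L^{\infty}}+\|\l\pa_\l a\|_{L^{\infty}})$.

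\emph{(c) $|x'|>c|x|$ and $t\leq C_0\jap{x}^{1+\frac{\m}{2}}$.} No structure of the phase is used. By Lemma \ref{lem:PUest}, split $a=\chi(t^{\frac12}\l)a+(1-\chi(t^{\frac12}\l))a$.
\begin{itemize}
\item The first piece is bounded trivially by $2t^{-\frac12}\|a\|_{L^{\infty}}$.
\item The second piece is bounded by $\|\l^2a\|_{L^{\infty}}\int_{t^{-1/2}}^\infty\l^{-2}d\l=t^{\frac12}\|\l^2a\|_{L^{\infty}}$. Since $\m<2$ and $|x'|\sim|x|$, we have $t\lesssim\jap{x}^{1+\frac{\m}{2}}\lesssim\jap{x'}^{2}$ (for $|x'|\geq1$; the case of small $|x|$ is absorbed trivially). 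Hence $t^{\frac12}\lesssim t^{-\frac12}\min(|x|^2,|x'|^2)$.
\end{itemize}
This produces exactly the third term in the statement.

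The main obstacle is subcase (a). There one must choose $c$ small enough that the subtraction in $g$, in $\tilde{M}_2$ and in $\pa_\l^3S$ never destroys the signs and comparabilities on which the diagonal argument depends. In particular one needs the concavity $\pa_\l^3S\leq -c'\tilde{M}_2\l$ used in the degenerate regime $t\geq \tilde{M}_1/2$ to survive. I would verify this first by comparing $\int_0^{x}|V|^{-k/2}$ with $\int_0^{c|x|}|V|^{-k/2}$, which costs a factor $c^{1+k\m/4}$ times a constant depending on $C_{V,1}/C_{V,2}$.
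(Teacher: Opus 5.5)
Your route is genuinely different from the paper's. For $x'<0<x$ the paper proceeds in four steps:
\begin{itemize}
\item it first disposes of bounded $x$ or $x'$;
\item it discards $\l\le 2t^{-1/2}$ trivially;
\item it then sorts by comparing $t$ with $\min(|x|,|x'|)^2$ and $\max(|x|,|x'|)^2$, using the $\|\l^2a\|_{L^\infty}$ bound when $t\lesssim\min(|x|,|x'|)^2$, non-stationary phase when $t\gtrsim\max(|x|,|x'|)^2$, and a same-sign integration by parts when $|x'|\ge C_1|x|$;
\item it invokes the diagonal machinery only when $x\ge C_1|x'|$ and $|x'|^2\lesssim t\lesssim x^2$.
\end{itemize}
You sort by the ratio $|x'|/|x|$ first. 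This gives a cleaner picture. The extra term is needed only when $|x|\sim|x'|$, which is where the two half-lines can cancel. There, two crude tools with the natural threshold $t\sim\jap{x}^{1+\frac{\m}{2}}$ suffice, because $\m<2$ gives $\jap{x}^{1+\frac{\m}{2}}\lesssim|x'|^2$ for $|x'|\ge1$. The price is that in (a) you must rerun the full degenerate analysis for every $t$, treating the $x'$-side as a perturbation. The domination you flag does hold uniformly in $\l$. On the relevant ranges, each integrand is comparable to a nondecreasing function of $|s|$, with constants depending on $C_{V,1}/C_{V,2}$ and $R$, and this gives a ratio $\lesssim c$. The paper needs the same check in its last case.

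Two points do not close as written, though.

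\textbf{The bounded case in (c).} The claim ``the case of small $|x|$ is absorbed trivially'' is false. In (c) with $|x'|<1$ you have $|x|<1/c$ and $t$ ranging over a bounded interval. Meanwhile the third term carries $\min(|x|^2,|x'|^2)$, which tends to $0$. Take $x=-x'=\delta$, $t=1$, and $a(\l)=\psi(\l/A)$ with $\psi\in C_c^\infty((1,2))$, letting $A\to\infty$ and $\delta A\to0$. Then your bound $t^{\frac12}\|\l^2a\|_{L^\infty}$ is of size $A^2$, while the right-hand side stays $O(1)$. You need an actual argument here:
\begin{itemize}
\item Write $S(\l)=\l(x+x')+r(\l)$ with $r(\l)=\int_0^x(\sqrt{\l^2-V}-\l)\,ds+\int_0^{x'}(\sqrt{\l^2-V}-\l)\,ds$.
\item Since $|1-\l(\l^2-V)^{-\frac12}|\le\min(1,|V|\l^{-2})$, you get $\l|\pa_\l r|\lesssim|x|+|x'|\lesssim1$.
\item So $e^{ir}$ can be moved into the amplitude at the cost of $\|a\|_{L^\infty}+\|\l\pa_\l a\|_{L^\infty}$.
\item Lemma \ref{Lem:Qstph1} then applies to the free phase $-t\l^2+\l(x+x')$.
\end{itemize}
This is the role of the paper's preliminary reduction for bounded $x$ or $x'$. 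In your scheme only the case where both are bounded is needed, since one bounded and one large falls into (a).

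\textbf{The missing sign pattern.} After fixing $t>0$ and $|x|\ge|x'|$, the case $xx'<0$ still contains the pattern $x<0<x'$, which you never treat; it is the paper's case $|x'|\ge|x|$. It is easy to fill. Subcases (b) and (c) never use signs. In the analogue of (a), the dominant term $-\int_x^0(\l^2-V)^{-\frac12}ds$ has the same sign as $-2t$. Hence $\l^{-1}\pa_\l\Phi\le-2t-\frac12\int_x^0(\l^2-V)^{-\frac12}ds$, and the proof of Theorem \ref{thm:WKBosc} for $x\le x'$ applies.
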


\begin{remark}
When $V$ is an even function, $V(x)=V(-x)$, then we have
\begin{align*}
S_{+,+}(x,x')=S_{+,-}(x,-x'),\quad S_{-,-}(x,x')=S_{+,-}(-x',x),\quad S_{-,+}(x,x')=S_{+,-}(x',x).
\end{align*}
In this case, Theorem \ref{thm:WKBosc2} is a direct consequence of Theorem \ref{thm:WKBosc} without the additional term $|t|^{-\frac{1}{2}}\min(|x|^2,|x'|^2)\|\l^2a\|_{L^{\infty}}$. In our application, this term is controlled by the improved decay estimates \eqref{eq:ampaddecay} of $b_{+,+}$.

\end{remark}

\begin{proof}
We may assume $t>0$.
If $x, x' >0$ or $x, x' <0$, then the proof is an easy modification of that in Subsection \ref{2512101237} (with $x+x'$ instead of $x-x'$). Hence we assume $x' <0<x$. Moreover if $x$ or $x'$ is bounded, then we can include the oscillating factor $e^{iy(x, \lambda)}$ or $e^{iy(x', \lambda)}$ into the amplitude and perform the argument in Section \ref{2512101237}. 

In the following we suppose $t>0$, $x>M$ and $x' < -M$ for sufficiently large $M>0$. 
Firstly we split the oscillatory integral into
\begin{align*}
I(a)=I(\chi a)+I(\overline{\chi}a)=\mathcal{I}_0 + \mathcal{I}_1,
\end{align*}
where $\chi$ is a cut-off function as in Lemma \ref{lem:PUest}, $\chi_t=\chi(\sqrt{t}\l)$, and $\overline{\chi}_t:= 1-\chi_t$. We note that $\supp\chi_t\subset \{\lambda \le 2\sqrt{t}^{-1}\}$ and $\supp\overline{\chi}_t\subset \{\lambda \ge \sqrt{t}^{-1}\}$. By the support property of $\chi_t$, we have $|\mathcal{I}_0| \lesssim t^{-\frac{1}{2}}\|a\|_{L^{\infty}}$ . Thus, it suffices to estimate $\mathcal{I}_1$.

\underline{\textbf{Estimate when $|x'|\geq |x|$}} First, we consider the case where $t\lesssim x^2$.
By the support properties of $\overline{\chi}_t$,
\begin{align*}
|\mathcal{I}_1|\lesssim \int_{\sqrt{t}^{-1}}^{\infty}\l^{-2}d\l \cdot \|\l^2a\|_{L^{\infty}}= \sqrt{t}\|\l^2a\|_{L^{\infty}}\lesssim t^{-\frac{1}{2}} |x|^2\|\l^2a\|_{L^{\infty}}=t^{-\frac{1}{2}} \min(|x|^2,|x'|^2)\|\l^2a\|_{L^{\infty}}.
\end{align*}

Next, we assume that both $t\geq C_1^3x^2$ and $t \ge C_1 |x'|^2$ hold, where $C_1$ is a large constant determined later. If we take $C_1$ sufficiently large, then
\begin{align}\label{eq:anisophase1}
|\partial_{\lambda} \Phi (\lambda)| \ge \lambda (2t - C_V (x^{1+\frac{\mu}{2}} + |x'|^{1+\frac{\mu}{2}})) \gtrsim t\lambda,\quad |\partial^2 _{\lambda} \Phi (\lambda)|\lesssim t + x^{1+\frac{\mu}{2}} + |x'|^{1+\frac{\mu}{2}} \lesssim t
\end{align}
by \eqref{eq:phaseder3}, \eqref{eq:phaseder4} and $0<\m<2$, where $C_V >0$ depends only on $V$. Applying Lemma \ref{Lem:Qstph1} under the assumption \eqref{eq:stphaseas2} with $m=0$ and $M=t$, we obtain $|\mathcal{I}_1|\lesssim t^{-\frac{1}{2}}( \|a\|_{L^{\infty}}+ \|\l\pa_{\l}a\|_{L^{\infty}})$.

Finally, we assume that both $t\geq C_1^3x^2$ and $t \le C_1 |x'|^2$ hold. Taking $C_1$ sufficiently large such that $2t-\int_0^x\frac{1}{\sqrt{\l^2-V(s)}}ds\geq 2t-C_V|x|^{1+\frac{\m}{2}}\geq t$, we have
\begin{align*}
|\partial_{\lambda} \Phi (\lambda)| \ge t\lambda + \lambda \left| A(\l,x')\right|, \quad |\partial^2 _{\lambda} \Phi (\lambda)| \lesssim t + \left|A(\l,x')\right|
\end{align*}
by \eqref{eq:phaseder1}, where we set $A(\l,x'):=\int_{0}^{x'} \frac{ds}{\sqrt{\lambda^2 -V(s)}}$ and we have used that the two terms $-2t\l$ and $A(\l,x')$ have the same signatures for $t>0$ and $x'<0$. By the support property of $\overline{\chi}_t$ and the integration by parts, 
\begin{align*}
|\mathcal{I}_1|&\lesssim \int_{\sqrt{t}^{-1}}^{\infty} \left\{\frac{t +|A(\l,x')|}{\lambda^2 \left(t + |A(\l,x')|\right)^2} \|a\|_{L^{\infty}} + \frac{\|\lambda \partial_{\lambda} (\overline{\chi}_t a )\|_{L^{\infty}}}{\lambda^2 (t + |A(\l,x')|)} \right\}d\lambda \lesssim t^{-\frac{1}{2}}( \|a\|_{L^{\infty}}+ \|\l\pa_{\l}a\|_{L^{\infty}}).
\end{align*}

\underline{\textbf{Estimate when $|x|\geq |x'|$}} The estimate for $t\lesssim |x'|^2$ is similar to that when both $|x'|\geq |x|$ and $t\lesssim |x|^2$ hold. For $t\geq C_1x^2$ and $t\geq C_1^3|x'|^2$ with sufficiently large $C_1>0$, we have \eqref{eq:anisophase1}. Applying Lemma \ref{Lem:Qstph1} under the assumption \eqref{eq:stphaseas2} with $m=0$ and $M=t$, we obtain $|\mathcal{I}_1|\lesssim t^{-\frac{1}{2}}( \|a\|_{L^{\infty}}+ \|\l\pa_{\l}a\|_{L^{\infty}})$.

It remains to consider the case where both $t\leq C_1x^2$ and $t\geq C_1^3|x'|^2$, which yields $x\geq C_1|x'|$. Similarly to Subsection \ref{2512101237}, we split $\mathcal{I}_1$ into the low and high energy regime as in Section \ref{2512101237}. By virtue of Lemma \ref{lem:PUest}, it suffices for $a_1$ supported in $\{\lambda \ge R \jap{x}^{-\frac{\mu}{2}}\}$ and $a_2$ supported in $\{\lambda \le 2R \jap{x}^{-\frac{\mu}{2}}\}$ (with sufficiently large $R$) to prove
\begin{align*}
|I(a_1)|\lesssim& |t|^{-\frac{1}{2}}( \|a_1\|_{L^{\infty}}+ \|\l\pa_{\l}a_1\|_{L^{\infty}}),\,\, |I(a_2)|\lesssim \jap{\max(|x|,|x'|)}^{-\frac{\m}{2}}|t|^{-\frac{1}{2}}( \|\l^{-1}a\|_{L^{\infty}}+ \|\pa_{\l}a\|_{L^{\infty}})
\end{align*}
when all of $|x|\geq |x'|$, $t\leq C_1x^2$ and $t\geq C_1^3|x'|^2$ (with sufficiently large $C_1$) are satisfied.

Concerning $I(a_2)$, the argument in Subsubsection \ref{262221507} still works if we take $C_1$ sufficiently large. The point is that, using $x \ge C_1 |x'|$, we absorb the terms involving $x'$ into those with $x$. Almost all of the estimates in Subsubsection \ref{262221507} hold with $x+x'$ instead of $x-x'$ (since the corresponding terms to $M_1$ and $M_2$ are positive) and we omit the details.

To deal with $I(a_1)$, we further split the interval into three regions $J_{1, 1, R} := \{\lambda \le \frac{x}{\sqrt{R}t}\}$, $J_{1, 2, R}:= \{\frac{x}{\sqrt{R}t} \le \lambda \le \frac{\sqrt{R} x}{t}\}$ and $J_{1, 3, R}:= \{\lambda \ge \frac{\sqrt{R} x}{t}\}$ as in Subsubsection \ref{262221527}. If $\lambda \in J_{1, 1, R}$, we have
\begin{align*}
|\partial_{\lambda} \Phi (\lambda)| \ge \frac{x}{4} -3t\lambda \gtrsim x, \quad |\partial^2 _{\lambda} \Phi (\lambda)| \lesssim t + \frac{x}{\lambda} + \frac{|x'|}{\lambda} \lesssim \frac{x}{\lambda},
\end{align*}
which yield the decay rate $t^{-\frac{1}{2}}$ by Lemma \ref{Lem:Qstph1} with $m =x$ and $M = \sqrt{R} t$. On the other hand, if $\lambda \in J_{1, 3, R}$, we have
\begin{align*}
|\partial_{\lambda} \Phi (\lambda)| \ge 2t\lambda - x \gtrsim t\lambda, \quad |\partial^2 _{\lambda} \Phi (\lambda)| \lesssim t + \frac{x}{\lambda} + \frac{|x'|}{\lambda} \lesssim t,
\end{align*}
which also gives the decay rate $t^{-\frac{1}{2}}$ by Lemma \ref{Lem:Qstph1}.

Finally, if $\lambda \in J_{1, 2, R}$, we have $|\partial^2 _{\lambda} \Phi (\lambda)| \lesssim t + \frac{x}{\lambda} + \frac{|x'|}{\lambda} \lesssim t$ as above. Moreover, for $f(\lambda):= -2t + \int_{0}^{x} \frac{ds}{\sqrt{\lambda^2 -V(s)}} + \int_{0}^{x'} \frac{ds}{\sqrt{\lambda^2 -V(s)}} = \partial_{\lambda} \Phi (\lambda) / \lambda$, 
\begin{align*}
&f\left(\frac{x}{t}\right) \le -2t + \left. \frac{x}{\lambda} \right|_{\lambda = \frac{x}{t}} = -t <0, \quad f\left(\frac{x}{16t}\right) \ge -3t + \left. \frac{x}{4\lambda} \right|_{\lambda = \frac{x}{16t}} =t >0
\end{align*}
and the intermediate value theorem yields the existence of $\lambda_0 \in (\frac{x}{16t}, \frac{x}{t})$ such that $f(\lambda_0) =0$. Therefore we obtain
\begin{align*}
|f(\lambda)| &\ge |\lambda^2 -\lambda^2 _0| \left| \int_{0}^{x} \frac{ds}{\sqrt{\lambda^2 -V(s)} \sqrt{\lambda^2 _0 -V(s)}(\sqrt{\lambda^2 -V(s)} + \sqrt{\lambda^2 _0 -V(s)})}\right| \\
& \quad - |\lambda^2 -\lambda^2 _0| \left| \int_{0}^{x'} \frac{ds}{\sqrt{\lambda^2 -V(s)} \sqrt{\lambda^2 _0 -V(s)}(\sqrt{\lambda^2 -V(s)} + \sqrt{\lambda^2 _0 -V(s)})}\right| =: |\lambda^2 -\lambda^2 _0|(\mathrm{I}_1 -\mathrm{I}_2).
\end{align*}
Concerning the first term, we estimate
\begin{align*}
\mathrm{I}_1 \ge \int_{\frac{x}{2}}^{x} \frac{ds}{\sqrt{\lambda^2 -V(s)} \sqrt{R\lambda^2  -V(s)}(\sqrt{\lambda^2 -V(s)} + \sqrt{R\lambda^2 -V(s)})} \ge \frac{1}{2R} \int_{\frac{x}{2}}^{x} \frac{ds}{(\lambda^2 -V(s))^{\frac{3}{2}}} \ge \frac{x}{4R} \left(\frac{3}{4\lambda}\right)^3.
\end{align*}
For the second term we use $\mathrm{I}_2 \le \frac{|x'|}{\lambda^2 \lambda_0} \lesssim \frac{\sqrt{R} |x'|}{\lambda^3}$. Therefore, if $C_1$ is sufficiently large, we obtain
\begin{align*}
|\partial_{\lambda} \Phi (\lambda)| \ge \lambda |f(\lambda)| \gtrsim |\lambda^2 -\lambda^2 _0| \frac{x}{\lambda^2} \gtrsim |\lambda -\lambda_0| \frac{x}{\lambda} \sim t|\lambda -\lambda_0|, 
\end{align*}
which yields the decay rate $t^{-\frac{1}{2}}$ combined with Lemma \ref{Lem:Qstph1}.
\end{proof}


\subsection{Proof of the main theorem}

\begin{proof}[Proof of Theorem \ref{thm:main}]
We write $e^{-itP} E_{\mathrm{ac}}(P)=\int_{0}^{\infty} e^{-it\lambda^2} \tilde{E} (\lambda) d\lambda$, where $\tilde{E}$ is the spectral density defined in \eqref{def:spdensity}. We also note that the integral kernel of  $\tilde{E} (\lambda)$ has a WKB expression of the form \eqref{2509041043}. For $t\neq 0$ and $x,x'\in \R$ with $x\geq 0$ and $x'\leq 0$, it suffices to prove
\begin{align*}
\left|\int_0^{\infty}b_{\s_1,\s_2}(\l,x,x')e^{-it\l^2+iS_{\s_1,\s_2}(\l,x,x')}d\l\right|\lesssim \frac{1}{|t|^{\frac{1}{2}}}.
\end{align*}
When $t\neq 0$ and $x,x'\in \R$ with $x\leq 0$ and $x'\geq 0$, we use another WKB expression of $\tilde{E}(\l)$ explained in Remark \ref{rem:spprojWKBxx'<0} instead. Thus, we consider the case where $t\neq 0$, $x\geq 0$, and $x'\leq 0$ hold only.

First, we consider the $(+,-)$-case. By the symbolic estimates \eqref{2510201029} of $b_{\s_1,\s_2}$ and Theorem \ref{thm:WKBosc}, we have
\begin{align*}
\left|\int_0^{\infty}b_{+,-}(\l,x,x')e^{-it\l^2+iS_{+,-}(\l,x,x')}d\l\right|\lesssim |t|^{-\frac{1}{2}}+\jap{\max(|x|,|x'|)}^{-\frac{\m}{2}}|t|^{-\frac{1}{2}}\cdot \jap{x}^{\frac{\m}{4}}\jap{x'}^{\frac{\m}{4}}\lesssim |t|^{-\frac{1}{2}},
\end{align*}
where we note that $-t\l^2+S_{+,-}(\l,x,x')=\Phi(\l,x,x')$ in the notation in Theorem \ref{thm:WKBosc}.

Next, we consider the $(+,+)$-case. The estimate for $t\neq 0$ and $xx'\geq 0$ is similarly proved to the $(+,-)$-case due to the first statement of Theorem \ref{thm:WKBosc2}.
By \eqref{2510201029}, \eqref{eq:ampaddecay} and the second statement of Theorem \ref{thm:WKBosc2}, for $t\neq 0$ and $xx'<0$,
\begin{align*}
&\left|\int_0^{\infty}b_{+,+}(\l,x,x')e^{-it\l^2+iS_{+,+}(\l,x,x')}d\l\right|\\
&\lesssim |t|^{-\frac{1}{2}}+\jap{\max(|x|,|x'|)}^{-\frac{\m}{2}}|t|^{-\frac{1}{2}}\cdot \jap{x}^{\frac{\m}{4}}\jap{x'}^{\frac{\m}{4}}+|t|^{-\frac{1}{2}}\min(|x|^2,|x'|^2)\|\l^2b_{+,+}\|_{L^{\infty}_{\l}} \lesssim |t|^{-\frac{1}{2}},
\end{align*}
where we have used $\max(|x|^{-2},|x'|^{-2})=(\min(|x|^2,|x'|^2))^{-1}$ and $-t\l^2+S_{+,+}(\l,x,x')=\Phi(\l,x,x')$ in the notation in Theorem \ref{thm:WKBosc2}.

Since $S_{-,+}(\l,x,x')=S_{+,-}(\l,x',x)$ and $S_{-,-}(\l,x,x')=-S_{+,+}(\l,x,x')$, we obtain similar estimates for the $(-,+)$ and $(-,-)$ cases. This completes the proof.
\end{proof}

\section{Extension to potentials which may be positive on bounded intervals}\label{251271359}
In this section, we extend the dispersive estimates to those with slowly decaying attractive potentials which may be positive on a bounded interval. We omit the full details and provide only an outline of the proof.

\begin{assump}\label{251271401}
For some $\mu \in (0, 2)$ and any $\alpha \in \N_0$, a function $V \in C^{\infty} (\R; \R)$ satisfies $|\partial^{\alpha} _x V(x)| \lesssim \langle x \rangle^{-\mu-\alpha}$ uniformly in $x \in \R$.
Furthermore there exists $R_0 >0$ such that $-V(x) \gtrsim \langle x \rangle^{-\mu}$ uniformly in $|x| \ge \frac{R_0}{2}$.
\end{assump}

\begin{thm}\label{2512101131}
Under Assumption \ref{251271401}, the dispersive estimate \eqref{eq:dispersive} holds (for $d=1$).
\end{thm}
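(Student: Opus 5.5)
We reduce Theorem \ref{2512101131} to the machinery of Sections \ref{2508241544}--4 by splitting the spectral integral $e^{-itP}E_{\mathrm{ac}}(P)=\int_0^\infty e^{-it\l^2}\tilde E(\l)\,d\l$ into a high-energy part $\l\geq C_0$ and a low-energy part $\l\leq 2C_0$, using a cutoff $\chi(\l/C_0)$; here $C_0\geq 1$ is chosen with $|V|\leq C_0/2$ as in Remark \ref{rem:spdensityWKBext}. The short-time bound $|t|\leq T$ comes from Yajima's result, so only $|t|\gg 1$ needs work.

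\textbf{High energies.} By Remark \ref{rem:spdensityWKBext}, for $\l\geq C_0$ the Liouville variable $y(x,\l)=\int_0^x\sqrt{\l^2-V}$ is still well defined. The kernel $\tilde E(\l,x,x')$ then has the WKB form \eqref{2509041043}, with amplitudes obeying \eqref{2510201029} and \eqref{eq:ampaddecay}. Since $\l$ is bounded below, $\l^2-V\sim\l^2$, and the phases behave like $-t\l^2\pm\l(x\mp x')$ up to harmless corrections. We therefore reuse the high-energy analysis of Subsection \ref{262221527}. Lemma \ref{lem:phasederiuniv} and Lemma \ref{lem:largephase} only require $\l\gg\jap{x}^{-\mu/2}$, which is automatic here, and their proofs use the negativity of $V$ only through $\l^2-V\sim\l^2$. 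Combining these with Lemma \ref{Lem:Qstph1} gives the bound $t^{-1/2}(\|a\|_{L^\infty}+\|\l\pa_\l a\|_{L^\infty})$ with $a=\chi_{\geq C_0}b_{\s_1,\s_2}$. The same reasoning covers the off-diagonal $(+,+)$ case via the high-energy portion of the proof of Theorem \ref{thm:WKBosc2}.

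\textbf{Low energies.} For $\l\leq 2C_0$ the Liouville transform degenerates on $|x|<R_0/2$, where $V$ may be positive. We construct Jost solutions $u_\pm(x,\l)$ as in Theorem \ref{2509021637}, but only on the half-lines $\pm x\geq R_0/2$, where $-V\gtrsim\jap{x}^{-\mu}$. There the proofs of Lemmas \ref{2509041100}--\ref{lem:Wsymbol} and Theorems \ref{2508311359}, \ref{2509021616} apply verbatim once the integrals start at $\pm R_0/2$ instead of $0$. We then extend $u_\pm$ across $[-R_0,R_0]$ by solving the ODE. Regularity theory of ODEs with parameters shows that the extension, together with its $x$-derivative, is $C^2$ in $\l\in[0,2C_0]$ uniformly for $|x|\leq R_0$ (derivatives are even smooth in $\l^2$). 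Consequently $u_\pm$ keeps a WKB form \eqref{2509031817} for $x$ on the far side, with amplitudes having symbolic bounds $|\pa_\l^\a\tilde a|\lesssim\l^{-\a}$.

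\textbf{Main obstacle: the Wronskian.} The hardest point is showing that $|\mathrm{Wr}(\l)|\gtrsim 1$ and $|\pa_\l^\a\mathrm{Wr}|\lesssim\l^{-\a}$ uniformly down to $\l=0$. The identity $|\mathrm{Wr}|^2-|w\{u_+,\overline{u_-}\}|^2=4$ from the proof of Theorem \ref{2509021637}(iv) still holds, because it uses only the asymptotics at $\pm\infty$, and it gives $|\mathrm{Wr}|\geq 2$. The symbolic bounds follow by evaluating the Wronskian at $x=\pm R_0$ through the continuous-in-$\l$ transfer matrix across the compact interval. With this in hand, Lemma \ref{2508241545} and the proof of Theorem \ref{2509041030} give the expansion \eqref{2509041043}, with \eqref{2510201029} and \eqref{eq:ampaddecay}, for $\l\leq 2C_0$. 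The phases are those of \eqref{2510201028}, with the reference point shifted and the integrand on $[-R_0,R_0]$ absorbed into the amplitude. When $\max(|x|,|x'|)\lesssim R_0$ this absorption also applies, and the amplitude is $O(\l)$ with the required derivative bounds. Theorems \ref{thm:WKBosc} and \ref{thm:WKBosc2} then apply as in the proof of Theorem \ref{thm:main}. Their hypotheses on $M_1$ and $M_2$ (Lemma \ref{lem:phasederiunivlow}) hold up to a bounded error from $[-R_0,R_0]$, which is dominated once $|x|\gg R_0$.
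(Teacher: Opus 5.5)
Your route is the paper's: the high/low split at $C_0$ via Remark~\ref{rem:spdensityWKBext}, Jost solutions built by the Liouville transform on the two half-lines where $-V\gtrsim\jap{x}^{-\mu}$, continuation across the compact interval, $|\mathrm{Wr}(\lambda)|\geq 2$ from $|\mathrm{Wr}|^2-|w\{u_+,\overline{u_-}\}|^2=4$, absorption of the compact-region contributions into the amplitudes, and then Theorems~\ref{thm:WKBosc} and~\ref{thm:WKBosc2}. Two steps, however, do not hold as written.

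\textbf{The continued Jost solutions.} ODE regularity in parameters does not show that the continued $u_\pm$ is $C^2$ in $\lambda$ on $[0,2C_0]$, let alone smooth in $\lambda^2$. The Cauchy data at the endpoints of the compact interval come from the half-line construction. For those data you only have the symbolic bounds $|\partial_\lambda^\alpha\,\cdot\,|\lesssim|\lambda|^{-\alpha}$ of Theorems~\ref{2509021616} and~\ref{2509021637}, which give no control at $\lambda=0$. What is true, and is all you use afterwards, is weaker. The transfer matrix of the first-order system for $(u,\partial_x u)$ across the compact interval is smooth in $\lambda$, since its coefficients are polynomial in $\lambda$. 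Hence the symbolic bounds propagate across the interval; this is the Gronwall argument in Proposition~\ref{25129848}~(ii). On the far half-line you must then rerun the proof of Theorem~\ref{2508311359} with data prescribed at $y=0$ instead of at $y=\mp\infty$. That gives the limits at $\mp\infty$ and the symbolic bounds of the far-side amplitudes.

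\textbf{The phases on the compact interval.} Theorems~\ref{thm:WKBosc} and~\ref{thm:WKBosc2} are proved for a potential satisfying \eqref{assum:Vbound} on all of $\mathbb{R}$. Pointwise negativity is used, for example, in Lemma~\ref{lem:phasederiunivlow} and in the identity $\partial_\lambda^2S(0)=M_1$. So hypotheses that only hold ``up to a bounded error'' cannot be fed into them, and you must say what is integrated over $[-R_0,R_0]$. It cannot be $\sqrt{\lambda^2-V}$ itself when $V$ takes positive values. That integrand is imaginary where $V>\lambda^2$. Moreover, $\int_{-R_0}^{R_0}\sqrt{\lambda^2-V}\,ds$ has unbounded $\lambda$-derivative as $\lambda^2\downarrow\max V$, so the absorbed factor would violate \eqref{2510201029}.

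The paper's device is an auxiliary smooth $\tilde V<0$ satisfying Assumption~\ref{assump:V}, with $\tilde V=V$ for $|x|\geq\frac34R_0$. For $\pm x\geq R_0$ one then has $\int_{\pm R_0}^{x}\sqrt{\lambda^2-V}\,ds=\int_0^{x}\sqrt{\lambda^2-\tilde V}\,ds-\int_0^{\pm R_0}\sqrt{\lambda^2-\tilde V}\,ds$. The $x$-independent unimodular factors have bounded $\lambda$-derivatives and go into the amplitude. For $|x'|\leq 2R_0$ one writes $u_\pm(x',\lambda)=(\lambda^2-\tilde V(x'))^{-1/4}b_\pm\,e^{-i\int_0^{x'}\sqrt{\lambda^2-\tilde V}\,ds}$, with $b_\pm$ controlled by the compact-interval bounds. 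The phases are then exactly those of Section~4 for $\tilde V$. Theorems~\ref{thm:WKBosc} and~\ref{thm:WKBosc2} then apply verbatim, with no error term to dominate; the $(+,+)$ term additionally uses the off-diagonal decay of Proposition~\ref{25129848}~(i).
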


\subsection{Estimate in the high energy regime}

As is mentioned in Remark \ref{rem:spdensityWKBext}, the spectral density $\tilde{E}(\l,x,x')$ has a WKB expression of the form \eqref{2509041043} for $\l\geq C_0$ and $x,x'\in \R$, where we fix $C_0 \ge1$ such that $|V(x)| \le \frac{C_0}{2}$ for all $x \in \R$. Moreover, the arguments in Subsubsection \ref{262221527} and Subsection \ref{subsec:off-diagonal} still work. In this way, we can prove the dispersive estimate in this regime, that is,
\begin{align*}
\|e^{-itP}\g(P)\|_{L^1\to L^{\infty}}\lesssim |t|^{-\frac{1}{2}}\quad (\text{for $t\in \R\setminus \{0\}$}),
\end{align*}
where $\g\in C^{\infty}(\R;[0,1])$ satisfies $\g(\l)=1$ for $\l\ge 2C_0$ and $\g(\l)=0$ for $\l\leq C_0$.

\subsection{Estimate in the middle and low energy regime}

Below, we focus on the middle and low energy regime. 
Basically, we follow the argument in Section \ref{2508241544} to obtain the WKB expression of the spectral density although we have to be careful since the Liouville transform is not always well-defined for $|x|\leq R_0/2$ since $V$ is not assumed to be negative here.

\vspace{1mm}
\noindent\underline{\textbf{Liouville transform near the spatial infinity}}
Since $-V(x)\gtrsim \jap{x}^{-\mu}$ for $|x|\geq R_0/2$, we can still define the Liouville transform near the spatial infinity. For $x \in \R_{\ge R_0}$ and $x'\in \R_{\le -R_0}$, we define 
\begin{align*}
y_{+} (x, \lambda) = \int_{R_0}^x \sqrt{\lambda^2 -V(s)} ds,\quad y_{-} (x', \lambda) = \int_{-R_0}^{x'} \sqrt{\lambda^2 -V(s)} ds.
\end{align*}
Then $y_+ (\cdot, \lambda)$ has a smooth inverse $x_+ (\cdot, \lambda)$ defined on $[0, \infty)$, which also smoothly depends on $\lambda \in \R \setminus \{0\}$. Similarly we define $x_{-} (\cdot, \lambda)$ on $(-\infty, 0]$ as the inverse of $y_{-} (\cdot, \lambda)$. Its proof is identical to that of Lemma \ref{2509041100}. 
Moreover, by a similar calculation as in Lemma \ref{2509031823}, we have
\begin{align*}
|\partial^{\alpha} _{\lambda} x_{\pm}(y, \lambda)| \lesssim |\lambda|^{-\alpha} |x_{\pm} (y, \lambda)|
\end{align*}
for any $\alpha \in \N_0$, $\lambda \in \R \setminus \{0\}$ and $y \in [0, \infty)$ or $y \in (-\infty, 0]$ respectively.

\vspace{1mm}
\noindent\underline{\textbf{Construction of Jost functions}} 

\begin{proposition}\label{25129848}
For $0<|\lambda| \le 4C_0$, there exist smooth solutions $u_{\pm}$ of
\begin{align}
(-\partial^2 _x +V(x))u_{\pm} (x, \lambda) =\lambda^2 u_{\pm} (x, \lambda) \label{251271818}
\end{align}
such that the following holds:

We have
\begin{align}
u_{\pm} (x, \lambda) =
\left\{\begin{aligned} 
&\frac{1}{(\lambda^2 - V(x))^{\frac{1}{4}}} \left( \tilde{a}_{\pm, +}^+ (x, \lambda)e^{iy_+ (x, \lambda)} + \tilde{a}_{\pm, -}^+ (x, \lambda)e^{-iy_+ (x, \lambda)} \right)\quad (x\ge R_0)\\
&\frac{1}{(\lambda^2 - V(x))^{\frac{1}{4}}} \left( a_{\pm, +}^- (x, \lambda)e^{iy_- (x, \lambda)} + a_{\pm, -}^- (x, \lambda)e^{-iy_- (x, \lambda)} \right)\quad (x\leq -R_0)
\end{aligned}\right. ,\label{eq:upmexp2}
\end{align}
where $\tilde{a}_{\s_1, \s_2}^{\pm}$ satisfy \eqref{2509031822} uniformly in $\pm x \ge R_0$ and $0<|\lambda| \le 4C_0$ for $\alpha =0, 1, 2$. Moreover,

\vspace{1mm}
\noindent$(i)$ 
For sufficiently large $M>0$,
\begin{align*}
|\tilde{a}_{+,-}^+(x,\l)|\lesssim |\l|^{-2}|x|^{-2},\quad |\tilde{a}_{-, +}^- (x', \lambda)| \lesssim |\l|^{-2}|x'|^{-2}
\end{align*}
uniformly in $0<|\l|\leq 4C_0$, $x\geq M$ and $x'\leq -M$.

\vspace{1mm}
\noindent$(ii)$
For $\a=0,1,2$, we have
\begin{align}
|\pa_{\l}^{\a}u_{\pm} (x, \lambda)| \lesssim |\l|^{-\a}\label{eq:upmboundedsymb}
\end{align}
uniformly in $|x| \le 2R_0$ and $0<|\lambda| \le 4C_0$.

\vspace{1mm}
\noindent$(iii)$ Let $\mathrm{Wr}(\lambda) := u_{+} (x, \lambda) \partial_{x} u_{-} (x, \lambda) - \partial_{x} u_{+} (x, \lambda) u_{-} (x, \lambda)$ be the Wronskian. Then $\mathrm{Wr} (\lambda)$ satisfies
\begin{align*}
|\mathrm{Wr}(\lambda)| \sim 1, \quad \left| \left(\frac{d}{d\lambda} \right)^{\alpha} \mathrm{Wr}(\lambda) \right| \lesssim |\lambda|^{-\alpha} \quad \text{for any $\alpha=0,1,2$}
\end{align*}
uniformly in $0<|\l|\leq 4C_0$.

\end{proposition}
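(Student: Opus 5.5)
We will build $u_\pm$ on each half-line by the Section 3 machinery run on that half-line only, and then extend across the compact region $|x|\le R_0$ by solving the ODE with data at $\pm R_0$. Concretely, for $x\ge R_0$ we use the Liouville transform $y_+(\cdot,\l)$, set
\begin{align*}
W_+(y,\l):=U(x_+(y,\l),\l)\quad (y\ge 0),
\end{align*}
with $U$ as in \eqref{2510011536}, and then extend $W_+$ to $y<0$ by a smooth cutoff in $y$. Since only the decay of $V$ on $|x|\ge R_0/2$ and the bound for $x_\pm$ were used in Lemma \ref{lem:Wsymbol}, the same computation gives \eqref{2509021619} for $W_+$, uniformly in $0<|\l|\le 4C_0$. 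Theorem \ref{2509021616} then yields $v_+$ on $[0,\infty)$ with coefficients $a_{+,\pm}$. The identical argument on the left half-line, with $y_-$, yields $v_-$ on $(-\infty,0]$.

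Next we define $u_+(x,\l):=(\l^2-V(x))^{-1/4}v_+(y_+(x,\l),\l)$ for $x\ge R_0$, and extend $u_+$ to all of $\R$ as the unique solution of \eqref{251271818}. We define $u_-$ analogously from the left. The formula \eqref{eq:upmexp2} on the half-line where each Jost function is constructed is immediate, with $\tilde a^+_{+,\s}(x,\l)=a_{+,\s}(y_+(x,\l),\l)$. The symbol bounds \eqref{2509031822} and part $(i)$ follow exactly as in the proof of Theorem \ref{2509021637} $(ii)$, $(iii)$, using $y_+(x,\l)\gtrsim\max(|\l|(x-R_0),\,x\jap{x}^{-\m/2})$ for $x\ge M$.

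On the opposite half-line, say $x\le -R_0$ for $u_+$, we cannot read off the expansion directly. There we write $u_+=\alpha(\l)u_-+\beta(\l)\overline{u_-}$. Inserting the expansion of $u_-$ on $x\le -R_0$ gives \eqref{eq:upmexp2}, with coefficients built from $\alpha,\beta$ and $\tilde a^-_{-,\s}$. Here $\alpha$ and $\beta$ are Wronskians evaluated at $x=-R_0$, so their symbol bounds reduce to part $(ii)$.

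For part $(ii)$ we use the regularity theory of ODEs in parameters. The Cauchy data $(u_+,\pa_xu_+)$ at $x=2R_0$ satisfy $|\pa_\l^\a\,\cdot\,|\lesssim|\l|^{-\a}$ by the half-line bounds. We transport them over $[-2R_0,2R_0]$ by the fundamental matrix of $-\pa_x^2+V-\l^2$. This matrix is entire in $\l$, with derivatives bounded uniformly for $|\l|\le4C_0$ on a compact interval, so the Leibniz rule gives \eqref{eq:upmboundedsymb}.

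For part $(iii)$, $\mathrm{Wr}(\l)$ is constant in $x$. Evaluating it on $[-2R_0,2R_0]$ and using part $(ii)$ gives the symbol bounds. The lower bound $|\mathrm{Wr}|\gtrsim1$ comes from the same argument as in Theorem \ref{2509021637} $(iv)$. The asymptotics at $+\infty$ give $w\{u_+,\overline{u_+}\}=-2i$, and the decomposition $u_+=au_-+b\overline{u_-}$ forces $|a|^2-|b|^2=1$. Hence $|\mathrm{Wr}|=2|b|\ge2$, and none of this uses the sign of $V$ near $0$.

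The main obstacle is the uniformity of the symbol estimates as $\l\to0$ across the compact region. On that region no $|\l|^{-\a}$ gain from a Liouville variable is available, so everything hinges on the transported data at $\pm2R_0$ already satisfying $|\l|^{-\a}$ bounds. The place to verify this is that $\pa_\l y_+(x,\l)$ stays bounded for $x$ near $R_0$, which holds since $|\pa_\l y_+|\lesssim|\l|\,|x-R_0|$. A second point is that $W_+$ must be cut off carefully at $y=0$ so that $\pa_yW_+\lesssim\jap{y}^{-3}$ still holds.
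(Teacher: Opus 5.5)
Your proposal is correct. The half-line construction via $y_\pm$ and Theorem \ref{2509021616}, part $(i)$, the transport of the Cauchy data from $x=\pm 2R_0$ in part $(ii)$, and the Wronskian argument in part $(iii)$ all match the paper. Your fundamental-matrix plus Leibniz argument is simply a repackaging of the paper's Gronwall estimates for $(u_+,\partial_xu_+)$ and its $\lambda$-derivatives.

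The genuine difference is how you get the expansion \eqref{eq:upmexp2} of $u_+$ on the far half-line $x\le -R_0$. The paper pulls $u_+$ back through $x_-(y,\lambda)$ and sets $\textbf{w}_+=R(y)\,{}^t(v_+,\partial_yv_+)$ on $y\le 0$. It then re-runs the argument of Theorem \ref{2508311359} starting from the finite datum $\textbf{w}_+(0,\lambda)$, whose symbol bounds come from part $(ii)$. This gives the limit at $y=-\infty$ and the bounds on $\partial_y^\beta\partial_\lambda^\gamma\textbf{w}_+$.

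You instead use the connection formula $u_+=\alpha u_-+\beta\overline{u_-}$, with $\alpha=-\frac{i}{2}w\{u_+,\overline{u_-}\}$ and $\beta=\frac{i}{2}\mathrm{Wr}$. Then
\begin{align*}
\tilde a^-_{+,+}=\alpha\,\tilde a^-_{-,+}+\beta\,\overline{\tilde a^-_{-,-}},\qquad \tilde a^-_{+,-}=\alpha\,\tilde a^-_{-,-}+\beta\,\overline{\tilde a^-_{-,+}}
\end{align*}
inherit every property of the half-line amplitudes of $u_-$. The only new input is $|\partial_\lambda^k\alpha|+|\partial_\lambda^k\beta|\lesssim|\lambda|^{-k}$ for $k\le 2$.

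What your route buys: it avoids redoing the ODE analysis from finite initial data, and it even identifies $\tilde a^-_{+,+}(-\infty,\lambda)=\beta(\lambda)$. What it costs:
\begin{itemize}
\item you need $w\{u_-,\overline{u_-}\}=2i$ before the expansion is available;
\item part $(ii)$ must be proved for the whole Cauchy vector $(u_\pm,\partial_xu_\pm)$, since $\alpha$ and $\beta$ involve $\partial_xu_\pm$. Your transport argument does give this, but $(ii)$ as stated covers only $u_\pm$.
\end{itemize}

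Two small remarks. First, extending $W_+$ to $y<0$ is unnecessary: the fixed point from $+\infty$ and the subsequent ODE continuation only see $B_+$ on $[0,\infty)$. Second, the decomposition gives $|b|^2-|a|^2=1$, not $|a|^2-|b|^2=1$. With $c=-\bar a$ and $d=b$ one has $ac+b\bar d=|b|^2-|a|^2$; the paper's \eqref{eq:Jostcoeffrel} carries the same sign slip. It is $|b|^2-|a|^2=1$ that yields $|\mathrm{Wr}|=2|b|\ge 2$.
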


\begin{proof}
We only consider $u_+$. We define
\begin{align*}
&U(x,\lambda)=\frac{V''(x)}{4(\lambda^2-V(x))^2}-\frac{5(V'(x))^2}{16(\lambda^2-V(x))^{3}} ,\quad W_{\pm}(y,\lambda):=U(x_{\pm}(y,\l),\l),\\
&B_{\pm}(y, \lambda) = -W_{\pm}(y, \lambda)
\begin{pmatrix}
\sin y \cos y & \sin^{2} y \\
-\cos^2 y & -\sin y \cos y
\end{pmatrix}
\end{align*}
for $x\in \R_{\ge R_0}\cup \R_{\le -R_0}$, $\pm y\geq 0$ and $\l\in \R$ according to \eqref{eq:BdefW2} and \eqref{2510011536}. Similarly to Lemma \ref{lem:Wsymbol}, we see that $W_{\pm}$ satisfies \eqref{2509021619} and hence \eqref{eq:Bassump} holds for $B_{\pm}$.

By the proof of Theorems \ref{2508311359}, we can find a smooth solution $\tilde{\textbf{w}}_{+}$ of
\begin{align*}
\partial_{y} \tilde{\textbf{w}}_{+} (y, \lambda) = B_+(y, \lambda)\tilde{\textbf{w}}_{+} (y, \lambda)\quad (y\geq 0)
\end{align*}
such that $\sup\limits_{y \geq 0, |\lambda|\leq 4C_0 } \| \tilde{\textbf{w}}_{+} (y, \lambda)\|_{\C^2} < \infty$ and $\tilde{\textbf{w}}_{+} (\infty, \lambda):=\lim\limits_{y\to \infty} \tilde{\textbf{w}}_{+} (y, \lambda)={}^t(1,i)$.
Moreover, we have the uniform convergence
\begin{align*}
&\lim_{y\to \infty}\sup_{0<|\l|<4C_0} \|\tilde{\textbf{w}}_{+} (\infty, \lambda) - \tilde{\textbf{w}}_{+} (y, \lambda)\|_{\C^2} =0, \quad\lim_{y\to \infty}\sup_{\lambda \in \R} \| \partial_{y} \tilde{\textbf{w}}_{+} (y, \lambda)\|_{\C^2} = 0.
\end{align*}
Furthermore, for $\a,\b,\c\in\mathbb{N}_0$ satisfying $\b+\c\leq 2$,
\begin{align*}
\| \partial^{\alpha} _{\lambda} \tilde{\textbf{w}}_{\pm} (y, \lambda)\|_{\C^2} \lesssim | \lambda |^{- \alpha},\,\, \| \pa_y^{\b}\partial^{\c}_{\l} \tilde{\textbf{w}}_{\pm} (y, \lambda)\|_{\mathbb{C}^2} \lesssim | \lambda |^{-\c}\jap{y}^{-\b},\,\, |\tilde{w}_{1,+}(y,\l)+i\tilde{w}_{2,+}(y,\l)|\lesssim \jap{y}^{-2}
\end{align*}
uniformly in $y \ge 0$ and $0<|\l|\leq 4C_0$, where we write $\tilde{\textbf{w}}_{+}={}^t(\tilde{w}_{1,+},\tilde{w}_{2,+})$.
According to \eqref{2510161027} and \eqref{eq:vdefviau}, we define $\tilde{v}_+(y,\l):= \tilde{w}_{1, +} (y, \lambda) \cos y + \tilde{w}_{2, +} (y, \lambda) \sin y$ and $u_+(x):=(\lambda^2-V(x))^{-\frac{1}{4}}\tilde{v}_+(y_+(x,\l),\l)$, which is a solution of \eqref{251271818} for $x\geq R_0$.
Mimicking the proof of Theorems \ref{2509021616} and \ref{2509021637}, we can find $\tilde{a}_{+,\pm}^{+}$ satisfying \eqref{eq:upmexp2} for $x\geq R_0$ uniformly in $x \ge R_0$ and and $0<|\l|\leq 4C_0$. For sufficiently large $M>0$, we have
\begin{align*}
y_+ (x, \lambda) \ge \lambda |x-R_0| \gtrsim \lambda x\quad (x>M),
\end{align*}
which proves $(i)$.

We extend $u_+$ from $[R_0, \infty)$ to $\R$ as a solution to
\begin{align}
\partial_x
\begin{pmatrix}
u_+ (x, \lambda) \\
\partial_x u_+ (x, \lambda)
\end{pmatrix}
=
\begin{pmatrix}
0 & 1 \\
V(x) -\lambda^2 & 0
\end{pmatrix}
\begin{pmatrix}
u_+ (x, \lambda) \\
\partial_x u_+ (x, \lambda) \label{251281121}
\end{pmatrix}.
\end{align}
Next, we prove \eqref{eq:upmboundedsymb}. By (\ref{251281121}), we have
\begin{align*}
\left|\partial_x \|\alpha (x, \lambda)\|^2 _{\C^2}\right| \lesssim \|\alpha (x, \lambda)\|^2 _{\C^2}, \quad \alpha (x, \lambda) =
\begin{pmatrix}
u_+ (x, \lambda) \\
\partial_x u_+ (x, \lambda)
\end{pmatrix}
\end{align*}
uniformly in $|x| \le 2R_0$ and $0<|\lambda| \le 4C_0$. By the Gronwall inequality, we obtain $\|\alpha (x, \lambda)\|^2 _{\C^2} \lesssim \|\alpha (2R_0, \lambda)\|^2 _{\C^2}$ for $|x| \le 2R_0$. Since the right hand side is bounded in $0<|\lambda| \le 4C_0$, we have $\|\alpha (x, \lambda)\|^2 _{\C^2} \lesssim 1$, which implies $|u_{+} (x, \lambda)| \lesssim 1$. Next we use the relation
\begin{align*}
\partial_{\lambda} \partial_{x} \alpha (x, \lambda)=
\begin{pmatrix}
0 & 0 \\
-2\lambda & 0
\end{pmatrix}
\alpha (x, \lambda) + 
\begin{pmatrix}
0 & 1 \\
V(x) -\lambda^2 & 0
\end{pmatrix}
\partial_{\lambda} \alpha (x, \lambda).
\end{align*}
Then, for $|x| \le 2R_0$ and $0<|\lambda| \le 4C_0$, we have
\begin{align*}
\left| \partial_x \|\partial_{\lambda} \alpha (x, \lambda)\|^2 _{\C^2}\right| \lesssim \|\partial_{\lambda} \alpha (x, \lambda)\|_{\C^2} \|\partial_{\lambda} \partial_{x} \alpha (x, \lambda)\|_{\C^2} \lesssim \|\partial_{\lambda} \alpha (x, \lambda)\|^2 _{\C^2} +1,
\end{align*}
which leads to $\|\partial_{\lambda} \alpha (x, \lambda)\|^2 _{\C^2} \lesssim \|\partial_{\lambda} \alpha (2R_0, \lambda)\|^2 _{\C^2} +1$ by the Gronwall inequality. By differentiating the asymptotics in \eqref{eq:upmexp2}, we can check $\|\partial_{\lambda} \alpha (2R_0, \lambda)\|_{\C^2} \lesssim |\lambda|^{-1}$. Hence we obtain $\|\partial_{\lambda} \alpha (2R_0, \lambda)\|^2 _{\C^2} \lesssim |\lambda|^{-2}$, in particular, $|\partial_{\lambda} u_{+} (x, \lambda)| \lesssim |\lambda|^{-1}$. Similarly, we have $|\pa_{\l}^2u_+(x,\l)|\lesssim |\l|^{-2}$. Thus we have proved \eqref{eq:upmboundedsymb}.

Now we define $v_+ (y, \lambda) = (\l-V(x))^{\frac{1}{4}} u_+ (x, \lambda) |_{x=x_- (y, \lambda)}$ for $y  \le 0$. As in the proof of Theorem \ref{2509021637}, we find that $v_+$ satisfies
\begin{align*}
(-\partial^2 _{y} +W_- (y, \lambda) -1)v_+ (y, \lambda) =0\quad (y\leq 0).
\end{align*}
Setting
\begin{align*}
\textit{\textbf{w}}_+ (y, \lambda) = R(y)
\begin{pmatrix}
v_+ (y, \lambda) \\
\partial_y v_+ (y, \lambda)
\end{pmatrix}\,\, \text{for}\quad y\le 0
\end{align*}
we obtain $\partial_y \textit{\textbf{w}}_+ (y, \lambda) =B_-(y, \lambda) \textit{\textbf{w}}_+ (y, \lambda)$ on $(-\infty, 0]$. Since we have $|W_{-} (y, \lambda)| \lesssim \langle y \rangle^{-2}$,
\begin{align*}
\left|\partial_y \|\textit{\textbf{w}}_+ (y, \lambda)\|^2 _{\C^2} \right| \lesssim \langle y \rangle^{-2} \|\textit{\textbf{w}}_+ (y, \lambda)\|^2 _{\C^2},
\end{align*}
which leads to $\|\textit{\textbf{w}}_+ (y, \lambda)\|^2 _{\C^2} \lesssim \|\textit{\textbf{w}}_+ (0, \lambda)\|^2 _{\C^2}$ for $y \le 0$. Now we recall
\begin{align*}
\textit{\textbf{w}}_+ (0, \lambda) =
\begin{pmatrix}
(\lambda^2 -V(-R_0))^{\frac{1}{4}} u_{+} (-R_0, \lambda) \\
(\lambda^2 -V(-R_0))^{-\frac{1}{4}} \partial_x u_+ (-R_0, \lambda) -\frac{1}{4} \partial_x V(-R_0) (\lambda^2 -V(-R_0))^{-\frac{5}{4}} u_{+} (-R_0, \lambda)
\end{pmatrix}
\end{align*}
to conclude $\|\textit{\textbf{w}}_+ (y, \lambda)\|^2 _{\C^2} \lesssim \|\textit{\textbf{w}}_+ (0, \lambda)\|^2 _{\C^2} \lesssim 1$ for $y \le 0$ and\footnote{Here we use the boundedness of $u_{+} (-R_0, \lambda)$ and $\partial_x u_{+} (-R_0, \lambda)$ with respect to $|\lambda| \le 4C_0$, which follow from those of $u_{+} (2R_0, \lambda)$, $\partial_x u_{+} (2R_0, \lambda)$ (see the asymptotics (\ref{2509031759}) and (\ref{2509031801})) and the fact that $u_+$ is extended by (\ref{251281121}), whose coefficients are uniformly bounded in $|\lambda| \le 4C_0$ and $x \in \R$.} $|\lambda| \le 4C_0$. This and the bound $\|B_-(y,\l)\|_{\C^2\to \C^2}\lesssim \jap{y}^{-2}$ shows that the limit
\begin{align*}
\textit{\textbf{w}}_+(-\infty,\l):=\textit{\textbf{w}}_+(0,\l)+\lim_{y\to -\infty}\int_{y}^0B_-(y,\l)\textit{\textbf{w}}_+(s,\l)ds
\end{align*}
exists and the uniform convergence
\begin{align*}
\|\textit{\textbf{w}}_+ (y, \lambda) - \textit{\textbf{w}}_+ (-\infty, \lambda)\|_{\C^2} \to 0, \quad\sup_{0<|\lambda|\le 4C_0} \|\partial_y \textit{\textbf{w}}_+ (y, \lambda)\|_{\C^2} \to 0\quad \text{as $y\to -\infty$}.
\end{align*}
It turns out from \eqref{eq:upmboundedsymb} that $\| \partial^{\alpha} _{\lambda} \textit{\textbf{w}}_{+} (0, \lambda)\|_{\C^2}\lesssim |\l|^{-\a}$ for $\a=0,1,2$. By using this and the proof of Theorems \ref{2508311359}, if $\a,\b,\c\in\mathbb{N}_0$ satisfy $\a\leq 2$ and $\b+\c\leq 2$,
\begin{align*}
\| \partial^{\alpha} _{\lambda} \textit{\textbf{w}}_{+} (y, \lambda)\|_{\C^2} \lesssim | \lambda |^{- \alpha},\,\, 
\| \partial^{\alpha} _{\lambda} \textit{\textbf{w}}_{+} (-\infty, \lambda)\|_{\C^2} \lesssim | \lambda |^{- \alpha},\,\,
\| \pa_y^{\b}\partial^{\c}_{\l} \textit{\textbf{w}}_{+} (y, \lambda)\|_{\mathbb{C}^2} \lesssim | \lambda |^{-\c}\jap{y}^{-\b}
\end{align*}
for $y\leq 0$ and $0<|\l|\leq 4C_0$. Going back to $u_+$, we can find $\tilde{a}_{+,\pm}^{-}$ satisfying \eqref{2509031822} and \eqref{eq:upmexp2} as in the proof of Theorems \ref{2509021616} and \ref{2509021637}. The estimates for the Wronskian (the part $(iii)$) are proved similarly to Theorem \ref{2509021637}. The proof is completed.
\end{proof}

Now we give a proof of Theorem \ref{2512101131}. Firstly we consider the case where $V$ is even at spatial infinity. Then, for non-symmetric $V$, we give a supplementary argument.
\begin{proof}[Proof of Theorem \ref{2512101131}]
\underline{\textbf{The case $V(x)=V(-x)$ for $|x|\ge \frac{R_0}{2}$}} It suffices to consider
\begin{align*}
e^{-itP} \chi (P \le C^2 _0) := \int_{0}^{\infty} e^{-it\lambda^2} \tilde{E} (\lambda) \chi (\lambda \le C_0) d\lambda,
\end{align*}
where $ \chi (\lambda \le C_0) := 1-\chi (\lambda \ge C_0)$ and $\supp \chi (\cdot \le C_0) \subset (-\infty, 2C_0]$. We decompose the proof into four regions and reduce them to the arguments in Section \ref{2512101237}.

\noindent(i) If $x, x' \ge 2R_0$, we can use Proposition \ref{25129848} to obtain an analogous WKB expression to (\ref{2509041043}), where the phase functions are replaced by $\phi_{\sigma_1, \sigma_2} (\lambda, x, x') = \sigma_{1} \int_{R_0}^{x} \sqrt{\lambda^2 -V(s)} ds + \sigma_{2} \int_{R_0}^{x'} \sqrt{\lambda^2 -V(s)} ds$ but the amplitude satisfies the same bounds (\ref{2510201029}). We introduce $\tilde{V} \in C^{\infty} (\R; \R)$ such that $\tilde{V} (x) =V(x)$ if $|x| \ge \frac{3}{4}R_0$, $\tilde{V} (x) <0$ for all $x \in \R$ and $\tilde{V}$ is an even function. Then we rewrite
\begin{align*}
\phi_{\sigma_1, \sigma_2} (\lambda, x, x') &= \sigma_{1} \int_{0}^{x} \sqrt{\lambda^2 -\tilde{V}(s)} ds + \sigma_{2} \int_{0}^{x'} \sqrt{\lambda^2 -\tilde{V}(s)} ds \\
& \quad - \sigma_{1} \int_{0}^{R_0} \sqrt{\lambda^2 -\tilde{V}(s)} ds-\sigma_{2} \int_{0}^{R_0} \sqrt{\lambda^2 -\tilde{V}(s)} ds
\end{align*}
and include the extra factor $\exp \left(-i\left(\sigma_{1} \int_{0}^{R_0} \sqrt{\lambda^2 -\tilde{V}(s)} ds+\sigma_{2} \int_{0}^{R_0} \sqrt{\lambda^2 -\tilde{V}(s)} ds\right)\right)$ coming from the third and fourth terms into the amplitude. Under this procedure, the bounds (\ref{2510201029}) do not change and we can employ the same calculations in Section \ref{2512101237} for this regime. The same is also true for the regime $x, x' \le -2R_0$.

\noindent (ii) If $x \ge 2R_0$ and $x' \le -2R_0$, we proceed similarly to (i) but modify the phase function as
\begin{align*}
&\phi_{\sigma_1, \sigma_2} (\lambda, x, x'):= \sigma_{1} \int_{R_0}^{x} \sqrt{\lambda^2 -V(s)} ds + \sigma_{2} \int_{-R_0}^{x'} \sqrt{\lambda^2 -V(s)} ds \\
&\quad = \sigma_{1} \int_{0}^{x} \sqrt{\lambda^2 -\tilde{V}(s)} ds + \sigma_{2} \int_{0}^{x'} \sqrt{\lambda^2 -\tilde{V}(s)} ds- \sigma_{1} \int_{0}^{R_0} \sqrt{\lambda^2 -\tilde{V}(s)} ds-\sigma_{2} \int_{0}^{-R_0} \sqrt{\lambda^2 -\tilde{V}(s)} ds
\end{align*}
and include the extra factor coming from the third and fourth terms into the amplitude. The regime $x' \ge 2R_0$ and $x \le -2R_0$ can be handled similarly.

\noindent (iii) If $x \ge 2R_0$ and $|x'| \le 2R_0$, we rewrite $u_{\pm} (x', \lambda)$ as
\begin{align}
u_{\pm} (x', \lambda)= \frac{1}{(\lambda^2 - \tilde{V}(x'))^{\frac{1}{4}}} \underbrace{\left((\lambda^2 - \tilde{V}(x'))^{\frac{1}{4}} u_{\pm} (x', \lambda) e^{i\int_{0}^{x'} \sqrt{\lambda^2 -\tilde{V}(s)} ds}\right)}_{:= b_{\pm} (\lambda, x, x')}e^{-i\int_{0}^{x'} \sqrt{\lambda^2 -\tilde{V}(s)} ds} \label{2512101309}
\end{align}
and we include the amplitude $b_{\pm}$ into that of the spectral density. Next we use Proposition \ref{25129848} for $u_{\pm} (x, \lambda)$ to obtain analogous WKB expressions to (\ref{2509041043}). Then the amplitude of the spectral density satisfies (\ref{2510201029}) for $\alpha = 0, 1$ by \eqref{eq:upmboundedsymb}. Hence it suffices to argue as in Section \ref{2512101237}. The case $x \le -2R_0$ and $|x'| \le 2R_0$ is treated similarly. 

\noindent (iv) If $|x|, |x'| \le 2R_0$, we rewrite both $u_{\pm} (x, \lambda)$ and $u_{\pm} (x', \lambda)$ as in (\ref{2512101309}) and argue similarly.

\underline{\textbf{The case $V$ is not symmetric}} We take $\tilde{V} \in C^{\infty} (\R; \R)$ such that $\tilde{V} (x) =V(x)$ if $|x| \ge \frac{3}{4}R_0$ and $\tilde{V} (x) <0$ for all $x \in \R$. We may assume $|x|, |x'| >M>2R_0$ for sufficiently large $M >1$ since otherwise the proof is ruduced to that for symmetric potentials. We have only to consider the oscillatory integral for which the phase function comes from the sign $(\sigma_1, \sigma_2) = (+, +)$. If $x, x' >M$ or $x, x' <-M$, we use the phase as in (i) above and the proof is identical to that in Subsubsection \ref{262221507}. For the case $x, -x'>M$, we only need to use the off-diagonal decay of the Jost functions: Proposition \eqref{25129848} $(i)$.  Therefore, with the phase as in (ii) above, we can follow the argument in the proof of Theorem \ref{thm:WKBosc2}.
\end{proof}

\end{document}